\newtheorem{thm}{Theorem}[section]
\newtheorem{lem}[thm]{Lemma}
\newtheorem{prop}[thm]{Proposition}
\theoremstyle{definition}
\newtheorem{defn}[thm]{Definition}
\theoremstyle{remark}
\newtheorem{rem}[thm]{Remark}
\numberwithin{equation}{section}
\let\emptyset\varnothing
\newcommand*{\medcap}{\mathbin{\scalebox{1.5}{\ensuremath{\cap}}}}
\newcommand{\tsp}{\thinspace}
\begin{document}

\title[Diophantine equations in primes]{Diophantine equations in primes:
density of prime points on affine hypersurfaces}

\author{Shuntaro Yamagishi}
\address{Mathematisch Instituut, Universiteit Utrecht, Budapestlaan 6, NL-3584 CD Utrecht, The Netherlands}
\email{s.yamagishi@uu.nl}
\indent

\date{Revised on \today}

\begin{abstract}
Let $F \in \mathbb{Z}[x_1, \ldots, x_n]$ be a homogeneous form of degree $d \geq 2$, and let $V_F^*$ denote the singular locus of the affine variety
$V(F) = \{ \mathbf{z} \in {\mathbb{C}}^n: F(\mathbf{z}) = 0 \}$. 
In this paper, we prove the existence of integer solutions with prime coordinates to the equation $F(x_1, \ldots, x_n) = 0$ provided $F$ satisfies suitable local conditions and $n - \dim V_F^* \geq  2^8 3^4 5^2 d^3 (2d-1)^2 4^{d}$. Our result improves on what was known previously due to Cook and Magyar (B. Cook and {\'A}. Magyar, `Diophantine equations in the primes'. Invent. Math. 198 (2014), 701-737), which required $n - \dim V_F^*$ to be an exponential tower in $d$.
\end{abstract}

\subjclass[2010]
{11D45, 11D72, 11P32, 11P55}

\keywords{Hardy-Littlewood circle method, Diophantine equations, primes}

\maketitle

\section{Introduction}
Solving Diophantine equations in primes is a fundamental problem in number theory.
There have been many significant results recently regarding solving linear equations in primes.
For example, the celebrated work of Green and Tao \cite{GT0} on arithmetic progressions in primes can be phrased as the statement that given any $n \in \mathbb{N}$ the system of linear equations
$$
x_{i+2} - {x_{i+1}} = x_{i+1} - x_{i} \ (1 \leq i \leq n)
$$
has a solution $(p_1, \ldots, p_{n+2})$ where each $p_i$ is prime 
and $p_1< p_2 < \cdots < p_{n+2}$.
A major achievement extending this result in which a more general system of linear equations is considered has been established by Green, Tao and Ziegler (see \cite{GT1, GT2, GTZ}).
Another important achievement in this area
includes 
the breakthrough on the problem of bounded gaps between primes by
 Maynard \cite{M},  Tao (see  \cite[pp. 385]{M}) and  Zhang \cite{Z}.
In particular, it was shown in \cite{M} that at least one of the equations
$$
x_1 - x_2 = 2j \ (1 \leq j \leq 300)
$$
has infinitely many integer solutions with prime coordinates.
There is also the work of Helfgott on the ternary Goldbach problem \cite{H0}.
It was proved by Vinogradov \cite{V} that the equation
$$
x_1 + x_2 + x_3 = N
$$
has an integer solution with prime coordinates for all sufficiently large odd $N \in \mathbb{N}$.
Helfgott proved that the assertion holds for all odd $N \in \mathbb{N}$ greater than or equal to 7,
establishing what is known as the ternary Goldbach problem.

In contrast to the great progress achieved for the linear case, the progress regarding solving general higher degree polynomial equations in primes
has been limited. Let $F \in \mathbb{Z}[x_1, \ldots, x_n]$ be a homogeneous form of degree $d \geq 2$.
We are interested in establishing the existence of prime solutions, which are integer solutions with prime coordinates, to the equation
\begin{equation}
\label{eqn main}
F(x_1, \ldots, x_n) = 0.
\end{equation}
For $d=2$ there are results due to Liu \cite{L2} and Zhao \cite{Zh}.
The first result in this direction for higher degrees was achieved by Cook and Magyar \cite{CM} (also for related work, see \cite{XY, Y2}). By applying the Hardy-Littlewood circle method, they established the existence of prime solutions to equations of the shape (\ref{eqn main}) under suitable local conditions; however, their result requires $n$ to be an exponential tower in $d$. In comparison to the situation for integer solutions, it is expected that an exponential in $d$ should be possible with current technology, and this is precisely what we establish in this paper.
First we introduce some notation in order to state our result.
Let $\wp$ denote the set of prime numbers. Let $\mathbb{Z}_p^{\times}$ be the units of $p$-adic integers. We consider the following condition.

Local conditions ($\star$): The equation (\ref{eqn main})
has a non-singular real solution in $(0,1)^{n}$, and also
has a non-singular solution in $( \mathbb{Z}_p^{\times})^n$ for every $p \in \wp$.
\newline
\newline
Let $V^*_{F}$ denote the singular locus of $V(F) = \{  \mathbf{z} \in \mathbb{C}^n:  F(\mathbf{z}) = 0  \}$, i.e. it is the affine variety in $\mathbb{A}^n_{\mathbb{C}}$ defined by 
\begin{equation}
\label{sing loc}
V_{F}^* = \left\{ \mathbf{z} \in \mathbb{C}^n:  \nabla F(\mathbf{z}) = \mathbf{0} \right\},
\end{equation}
where
$\nabla F = \left( \frac{\partial F}{\partial x_1}, \ldots,  \frac{\partial F}{\partial x_n} \right)$.
Let
\begin{eqnarray}
\label{defnoflamdastar}
\Lambda^*(x) =
\left\{
    \begin{array}{ll}
         \log x
         &\mbox{if } x \in \wp, \\
         0
         &\mbox{otherwise. }
    \end{array}
\right.
\end{eqnarray}
Given $\mathcal{H} \subseteq \mathbb{C}^n$ we let $\mathbbm{1}_{ \mathcal{H}}$ be the characteristic function of $\mathcal{H}$.
We define
$$
N_{\wp}(F; X) = \sum_{\mathbf{x} \in [0,X]^n } \Lambda^*(x_1) \cdots \Lambda^*(x_n) \mathbbm{1}_{ V(F) } (\mathbf{x}).
$$
In this paper, we establish the following result.
\begin{thm}
\label{mainthm1}
Let $F \in \mathbb{Z}[x_1, \ldots, x_n]$ be a homogeneous form of degree $d \geq 2$. Suppose $F$ satisfies the local conditions \textnormal{($\star$)} and
\begin{eqnarray}
\label{codim of F}
n -  \dim V_F^* \geq 2^8 3^4 5^2 d^3 (2d-1)^2 4^{d}.
\end{eqnarray}
Then we have
$$
N_{\wp}(F; X)  \gg X^{n-d}.
$$
\end{thm}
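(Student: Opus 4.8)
The natural approach is the Hardy–Littlewood circle method, applied to the exponential sum
\[
S(\alpha) \;=\; \sum_{\mathbf{x}\in[0,X]^n} \Lambda^*(x_1)\cdots\Lambda^*(x_n)\, e\bigl(\alpha F(\mathbf{x})\bigr),
\]
so that $N_\wp(F;X) = \int_0^1 S(\alpha)\,d\alpha$. First I would fix a Farey dissection of $[0,1]$ into major arcs $\mathfrak{M}$ (rationals $a/q$ with $q$ up to a small power $X^\theta$, together with a short interval around each) and minor arcs $\mathfrak{m}$. On the major arcs the standard strategy is to replace each $\Lambda^*$ by its expected main term via Siegel–Walfisz (or the prime number theorem in arithmetic progressions), producing a singular series $\mathfrak{S}(F)$ and a singular integral $\mathfrak{I}(F)$; the local conditions $(\star)$ are exactly what guarantee $\mathfrak{S}(F)\gg 1$ (nonsingular $p$-adic points in $(\mathbb{Z}_p^\times)^n$ for each $p$, with a uniform-in-$p$ lower bound coming from the codimension hypothesis) and $\mathfrak{I}(F)\gg 1$ (a nonsingular real point in $(0,1)^n$). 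This yields the major-arc contribution $\int_{\mathfrak M} S(\alpha)\,d\alpha \gg X^{n-d}$, which is the desired order of magnitude.

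The heart of the matter, and the step I expect to be the main obstacle, is the minor-arc estimate: one must show $\int_{\mathfrak m}|S(\alpha)|\,d\alpha = o(X^{n-d})$. The basic tool is Weyl differencing, which for a degree-$d$ form reduces a power of $|S(\alpha)|$ to a count of solutions of a multilinear system built from the $(d-1)$-st difference forms; here the quantitative gain is controlled by $n - \dim V_F^*$ through the Birch-type bound on the dimension of the "Weyl variety," and this is where the explicit constant $2^8 3^4 5^2 d^3(2d-1)^2 4^d$ in (1.2) originates. The subtlety, relative to Birch's classical integer-solution argument, is that the summation variables are restricted to primes, so one cannot directly orthogonalize; instead one uses a combination of (i) a transference-type replacement of $\Lambda^*$ by a nicer majorant, or equivalently a $W$-trick / Ramaré-type reduction, together with (ii) Vaughan's identity or Heath-Brown's identity to decompose each $\Lambda^*$ into Type I and Type II bilinear pieces, reducing the problem to exponential sums with smooth or Möbius-type coefficients to which Weyl differencing and the geometry of $V_F^*$ can be applied. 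Obtaining a power-saving that is uniform over the minor arcs, while keeping the required codimension only exponential in $d$ rather than an exponential tower, is precisely the technical improvement over Cook–Magyar and will occupy the bulk of the work.

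Finally I would assemble the pieces: the major-arc main term $c\,\mathfrak{S}(F)\mathfrak{I}(F)X^{n-d}$ with $c,\mathfrak{S}(F),\mathfrak{I}(F)\gg 1$ under $(\star)$, minus a minor-arc error of strictly smaller order under (1.2), gives $N_\wp(F;X)\gg X^{n-d}$. One bookkeeping point worth isolating is the passage between the codimension $n-\dim V_F^*$ (an affine/geometric quantity) and the "number of variables genuinely appearing" after any non-singular linear change of coordinates or splitting-off of a non-degenerate subform, since the Weyl estimates are naturally stated in terms of the latter; Birch's reduction handles this, and the explicit exponential dependence on $d$ is what allows the clean bound claimed in the theorem.
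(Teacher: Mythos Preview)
Your outline has the right overall shape, but there is a genuine incompatibility between your major-arc and minor-arc plans that would cause the argument to fail.

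On the minor arcs, the Weyl/Vaughan machinery you describe, with the codimension hypothesis of the size in the statement, only produces a power saving when the major arcs are taken out to $q\le N^{\vartheta_0}$ and $|\alpha-a/q|\le N^{\vartheta_0-d}$ for a \emph{fixed positive} $\vartheta_0$ (in the paper $\vartheta_0\approx 1/12$). This is forced by the structure of the argument: after Vaughan one has bilinear sums over a product of hyperbolic regions, and the Weyl differencing in that setting loses enough that the resulting alternative in the Birch dichotomy reads ``either $|S(\alpha)|$ is small, or $\alpha$ lies in an arc of width $N^{\vartheta_0-d}$ around $a/q$ with $q\le N^{\vartheta_0}$''. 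But then your major-arc plan of invoking Siegel--Walfisz cannot work: Siegel--Walfisz only controls $\Lambda^*$ in progressions to moduli $q\le (\log N)^A$, far short of $N^{\vartheta_0}$. The paper resolves this by abandoning Siegel--Walfisz entirely on the major arcs and instead expanding in Dirichlet characters, using the explicit formula together with zero-density estimates (Huxley, Jutila) and the classical zero-free region to handle moduli up to $N^{\vartheta_0}$. This enlarged-major-arc analysis is a substantial part of the work and is not a routine substitution.

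On the minor arcs your description is also missing the key structural idea that keeps the codimension requirement exponential rather than tower-exponential. The paper does not use a $W$-trick or transference. Instead it introduces a dichotomy: either there is a splitting $\mathbf{x}=(\mathbf{u},\mathbf{v},\mathbf{w})$ for which the ``cross term'' $G(\mathbf{u},\mathbf{v})=F(\mathbf{u},\mathbf{v},\mathbf{0})-F(\mathbf{u},\mathbf{0},\mathbf{0})-F(\mathbf{0},\mathbf{v},\mathbf{0})$ has large singular-locus codimension, in which case two applications of Cauchy--Schwarz remove all the $\Lambda$-weights and one appeals directly to Birch; or no such splitting exists, which forces a pigeonhole principle (Lemma~4.3 in the paper) saying that under \emph{any} partition of the variables the rank concentrates in one block. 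It is this second alternative that makes Vaughan's identity effective: after decomposing, one can always find a block of variables carrying enough rank to run Weyl differencing, and the three Vaughan components (two Type~I, one Type~II) are handled separately via the Schindler--Sofos uneven-box estimates and the author's earlier semiprime work. Without this dichotomy and the rank-concentration lemma, a direct Vaughan-plus-Weyl approach would lose control of the codimension in the way Cook--Magyar does.
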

From the main result of Cook and Magyar \cite[Theorem 1]{CM} the same conclusion can be deduced provided $n - \dim V_F^* > \mathfrak{C}_d$,
where $\mathfrak{C}_d$ is a quantity depending only on $d$.
Let us define $\mathfrak{t}_1 = 1$ and recursively define $\mathfrak{t}_{j+1} = d^{\mathfrak{t}_j}$ for $j \geq 1$.
By going through the proof of \cite[Theorem 1]{CM}, it can be verified that a crude lower bound for $\mathfrak{C}_d$ is given by
$$
\mathfrak{C}_d > (\mathfrak{t}_{d-1} 2^{5d} d! - 1) (d-1)^{\mathfrak{t}_{d-1} 2^{5d} d! - 1}.
$$
This is significantly larger than what is required in the situation for integer solutions (In the work of Birch \cite[Theorem]{B}, the requirement is
$n -  \dim V_F^* > (d-1) 2^d$.); however, the importance of \cite[Theorem 1]{CM} is in establishing the existence of
such a quantity. In contrast, we see that the requirement on $n -  \dim V_F^*$ in Theorem \ref{mainthm1}
is comparable to that in \cite[Theorem]{B}. We obtain Theorem \ref{mainthm1} as a consequence of Theorem \ref{mainthm2} which we state in the next section.

Throughout we use $\ll$ and $\gg$ to denote Vinogradov's well-known notation, i.e.
the statement $f \ll g$ means there exists a positive constant $C$ (it may depend on parameters
which are regarded as fixed) such
that $|f| \leq  C g$ for all values under consideration, and  the statement $g \gg f$ is equivalent to $f \ll g$.
We also make use of the $O$-notation; the statement $f = O(g)$ is equivalent to $f \ll g$.

\textit{Acknowledgements.} The author would like to thank Tim Browning, Brian Cook, Liqun Hu (Nanchang University), Simon Rydin Myerson, Damaris Schindler and Trevor Wooley for many helpful discussions. The author is very grateful to Trevor Wooley for his encouragement and
Stanley Yao Xiao for pointing the author to the work of Cook and Magyar \cite{CM} when the author was still a graduate student.
A large portion of this work regarding the minor arcs was completed while the author was attending the Thematic Program on Unlikely Intersections, Heights, and Efficient Congruencing
at the Fields Institute, and the major arcs at the University of Bristol.
The author would like to thank the Fields Institute and the University of Bristol for providing excellent environments to work on this paper.
He also received additional support from M. Ram Murty and the Department of Mathematics and Statistics at Queen's University,
EPSRC grant \texttt{EP/P026710/1} and the NWO Veni Grant \texttt{016.Veni.192.047}.
All the generous hospitality and the support received while completing this work are gratefully acknowledged.
Finally, the author would like to thank the anonymous referees for many useful comments.


\section{Overview and notation}
In order to state Theorem \ref{mainthm2}, we need to introduce more notation.
We begin by defining the following class of smooth weights.
\begin{defn}
Let $\delta, \mathfrak{c} > 0$ and $M_0 \in \mathbb{Z}_{\geq 0}$.
We define $\mathcal{S}^+( \delta ;  M_0; \mathfrak{c})$ to be the set of smooth functions $\omega :\mathbb{R} \rightarrow [0, \infty)$ satisfying
\begin{enumerate}[(i)]
\item     $[-\delta/2, \delta/2] \subseteq \textnormal{supp}(\omega) \subseteq [-\delta, \delta]$,
\item for any $k \in \{ 0, \ldots, M_0 \}$ we have
$\| \partial^{k} \omega / \partial x^{k} \|_{L^\infty(\mathbb{R})} \leq \mathfrak{c}$.
\end{enumerate}
\end{defn}

Let $N > 1$. Throughout the remainder of the paper all implicit constants in $\ll, \gg$ and the $O$-notation are independent of $N$.
Let $\mathbf{x}_0 = (x_{0,1}, \ldots, x_{0,n}) \in (0,1)^n$ and we define
\begin{eqnarray}
\label{defn of varpi}
\varpi(\mathbf{x}) =  \prod_{j = 1}^n \varpi_j (x_j)
\ \ \textnormal{  where  } \  \  \varpi_j(x_j) = \omega \left( \frac{x_j}{N} - x_{0,j}  \right) .
\end{eqnarray}
Let $\Lambda$ denote the von Mangoldt function, where $\Lambda(x)$ is $\log p$ if $x$ is a power of $p \in \wp$ and $0$ otherwise.
Given $\mathbf{x} = (x_1, \ldots , x_n)$ we let $\Lambda(\mathbf{x}) = \Lambda(x_1) \cdots \Lambda(x_n).$
\begin{thm}
\label{mainthm2}
Let $F \in \mathbb{Z}[x_1, \ldots, x_n]$ be a homogeneous form of degree $d \geq 2$ satisfying (\ref{codim of F}) and the local conditions \textnormal{($\star$)}. Let $\mathbf{x}_0 \in (0,1)^n$ be a non-singular real solution to the equation $F(\mathbf{x}) = 0$.
Let $\delta, \mathfrak{c} > 0$ and $\omega \in \mathcal{S}^+( \delta ; n; \mathfrak{c})$, where $\delta$ is sufficiently small with respect to $F$ and $\mathbf{x}_0$, and let $\varpi$ be as in (\ref{defn of varpi}).
Then for any $A > 0$ we have
$$
\sum_{\mathbf{x} \in [0,N]^n } \varpi(\mathbf{x}) \Lambda(\mathbf{x})\mathbbm{1}_{ V(F) } (\mathbf{x}) = c(F; \omega, \mathbf{x}_0) \, N^{n-d} + O \left(  \frac{N^{n-d}}{(\log N)^A} \right),
$$
where $c(F; \omega, \mathbf{x}_0) > 0$ is a constant depending only on $F$, $\omega$ and $\mathbf{x}_0$.
\end{thm}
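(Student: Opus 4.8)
\textbf{Overall strategy.} The plan is to apply the Hardy--Littlewood circle method to the exponential sum
$$
S(\alpha) = \sum_{\mathbf{x} \in [0,N]^n} \varpi(\mathbf{x}) \Lambda(\mathbf{x}) e(\alpha F(\mathbf{x})),
$$
so that the quantity we want is $\int_0^1 S(\alpha)\,d\alpha$. As usual one splits $[0,1]$ (or $[0,1]$ translated so the major arcs sit nicely) into major arcs $\mathfrak{M}$ near rationals $a/q$ with $q$ small, and minor arcs $\mathfrak{m}$. The key quantitative input driving everything is the hypothesis (\ref{codim of F}), i.e.\ a lower bound on $n - \dim V_F^*$ of the form $C d^3(2d-1)^2 4^d$ with an explicit constant; this is what allows the Weyl-type/Birch-type exponent to beat the density $N^{n-d}$ with room to spare, and it is the improvement over Cook--Magyar that the whole paper is built around.

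\textbf{Minor arcs.} First I would establish a Weyl-type inequality for $S(\alpha)$: if $\alpha$ has rational approximation $a/q$ with $q \asymp N^{\theta}$ in the ``genuinely minor'' range, then $|S(\alpha)| \ll N^n (\log N)^{O(1)} N^{-\sigma}$ for some $\sigma$ proportional to $(n - \dim V_F^*)/(d^2 (2d-1)^2 4^d)$ or thereabouts. The mechanism is the standard one: a Weyl differencing argument (or Birch's van der Corput iteration) reduces the $d$-th degree form to linear exponential sums controlled by the number of lattice points on an auxiliary system cut out by the multilinear form associated to $F$; the bound on $\dim V_F^*$ controls this count via the affine-dimension-growth estimates. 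The presence of $\Lambda(\mathbf{x})$ rather than $1$ is handled by a Vinogradov-type decomposition of the von Mangoldt function (type I / type II sums, e.g.\ Vaughan's identity or Heath-Brown's identity), after which each bilinear piece is amenable to the same differencing. Combining the Weyl bound with a minor-arc measure estimate and a suitable pruning gives $\int_{\mathfrak{m}} |S(\alpha)|\,d\alpha = O(N^{n-d}(\log N)^{-A})$ for any $A$, which requires the codimension hypothesis to be large enough that $\sigma$ exceeds $d$ with logarithmic savings to spare. I expect \emph{this} to be the main obstacle: getting the Weyl exponent to be genuinely linear in $n - \dim V_F^*$ with a constant as small as in (\ref{codim of F}), while simultaneously carrying the von Mangoldt weight, is the technical heart of the argument and is presumably where Theorem \ref{mainthm2} (stated but not yet proved in this excerpt) does the work.

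\textbf{Major arcs.} On $\mathfrak{M}$ I would approximate $S(\alpha)$ near $a/q$ by $q^{-n} S_{a/q} I(\beta)$ where $S_{a/q}$ is a complete exponential sum over $(\mathbb{Z}/q\mathbb{Z})^n$ twisted by the arithmetic of $\Lambda$ — concretely a sum involving Ramanujan-type sums / Gauss sums — and $I(\beta)$ is an archimedean oscillatory integral against the smooth weight $\varpi$. Summing the local factors over $q$ produces the singular series $\mathfrak{S}(F)$, and integrating $I(\beta)$ over $\beta$ produces the singular integral $\mathfrak{I}(F;\omega,\mathbf{x}_0)$; the product is the constant $c(F;\omega,\mathbf{x}_0)$. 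Convergence of the singular series (and the error from truncating $q$) again uses (\ref{codim of F}): the standard bound $|S_{a/q}| \ll q^{n - (n-\dim V_F^*)/(2^{d-1}) + \eps}$, say, forces $\sum_q$ to converge absolutely once the codimension is large. Positivity of $c(F;\omega,\mathbf{x}_0)$ is exactly where the local conditions ($\star$) enter: the nonsingular $p$-adic unit solutions make each local density $\sigma_p > 0$ and bounded away from $1$ uniformly, so $\mathfrak{S}(F) > 0$; the nonsingular real solution $\mathbf{x}_0 \in (0,1)^n$, together with choosing $\delta$ small enough that $\varpi$ is supported in a neighbourhood of $\mathbf{x}_0$ on which $F$ has no critical points, makes $\mathfrak{I} > 0$ by the implicit function theorem / stationary-phase-free Fourier-integral argument (the level set is a smooth hypersurface through the support). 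Finally, the von Mangoldt weights in the major-arc main term introduce the usual $\phi(q)^{-n}$-type factors; these are absorbed into $\mathfrak{S}(F)$ and do not affect positivity because of the unit condition in ($\star$). Assembling the major-arc contribution $c(F;\omega,\mathbf{x}_0)N^{n-d} + O(N^{n-d}(\log N)^{-A})$ with the minor-arc bound yields the theorem.
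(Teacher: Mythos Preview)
Your outline captures the overall circle-method skeleton, but there are two genuine gaps, one on each arc.

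\textbf{Minor arcs.} You say that after a Vaughan-type decomposition ``each bilinear piece is amenable to the same differencing.'' This is where the paper's main innovation lives, and your description does not reach it. The difficulty is that after Vaughan's identity the Type~II pieces are sums over hyperbolic regions $s_j t_j \leq N$ with $s_j, t_j \gg N^{\delta_1}$, and a direct Weyl/Birch differencing on such a sum does not see enough of the singular-locus codimension to give an exponent linear in $n - \dim V_F^*$ with the required constant. The paper resolves this via a structural dichotomy (Definition~\ref{dichotomy}): either some partition $\mathbf{x} = (\mathbf{u},\mathbf{v},\mathbf{w})$ makes the cross-term $\mathfrak{G}$ have large rank---in which case two applications of Cauchy--Schwarz remove all $\Lambda$-weights and Birch's estimate applies directly---or every such cross-term has small rank, which forces a pigeonhole-type rank concentration (Lemma~\ref{lemma rank concn}) across any partition. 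This concentration is then combined with the Schindler--Sofos estimate for uneven boxes (for the Type~I pieces) and a bihomogeneous Weyl differencing over products of dyadic boxes (for the Type~II pieces), the latter relying on the semiprime-codimension input of Proposition~\ref{semiprimeprop}. Without this dichotomy/concentration mechanism, there is no clear way to obtain the claimed minor-arc bound with the exponent in~(\ref{codim of F}).

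\textbf{Major arcs.} Your major-arc treatment is the standard one, implicitly relying on Siegel--Walfisz to handle the $\Lambda$-weight modulo~$q$. But the minor-arc argument above only works when the major arcs have width governed by $\vartheta_0$ a \emph{fixed positive constant} (the paper takes $\vartheta_0 \approx 1/12$); in particular $q$ ranges up to a small power of~$N$. At that scale Siegel--Walfisz gives no saving, and your approximation $S(\alpha) \approx q^{-n} S_{a/q} I(\beta)$ with a $(\log N)^{-A}$ error is not available. The paper instead expands $\Lambda^*$ into Dirichlet characters, applies the explicit formula, and controls the resulting sums over zeros via zero-density estimates (Lemma~\ref{Gbdd}) together with a uniform oscillatory-integral bound (Proposition~\ref{prop osc int}); the exceptional zero is handled separately via Siegel's theorem. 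This enlarged-major-arc analysis is essential and is not covered by your sketch.
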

\begin{rem}
As it will be evident in the proof, it is sufficient to assume $F$ satisfies (\ref{codimcondnlong}) instead of (\ref{codim of F}).
\end{rem}
There are couple differences worth mentioning regarding Theorem \ref{mainthm2} compared to the main result of Cook and Magyar. First
their result establishes the asymptotic formula without the additional smooth weight $\varpi$ as in Theorem \ref{mainthm2}.
Also they establish the result for a system of polynomials of equal degrees, whereas Theorem \ref{mainthm2} is for a single homogeneous form.
Though we do not explore it here, we expect our method to be able to overcome these differences with additional technical effort.


Let us define the following exponential sum
\begin{equation}
\label{def S}
S(\alpha) = \sum_{\mathbf{x} \in [0, N]^n }  \varpi(\mathbf{x}) \Lambda(\mathbf{x}) e( \alpha F(\mathbf{x}) ).
\end{equation}
We define the \textit{major arcs} $\mathfrak{M}(\vartheta_0)$ to be the set of points
$\alpha \in [0,1)$ satisfying the following: there exist $1 \leq q \leq N^{\vartheta_0}$ and
$a \in \mathbb{Z}$ with
$$
\gcd(a, q) = 1 \ \ \ \text{  and  } \ \ \ \  | q \alpha - a | \leq N^{\vartheta_0 - d}.
$$
We define the \textit{minor arcs} to be the complement $\mathfrak{m}(\vartheta_0) =  [0,1) \backslash \mathfrak{M}(\vartheta_0)$.
By the orthogonality relation we have
\begin{eqnarray}
\label{orthog reln}
\sum_{\mathbf{x} \in [0,N]^n } \varpi(\mathbf{x})  \Lambda(\mathbf{x}) \mathbbm{1}_{ V(F) } (\mathbf{x}) = \int_{0}^1 S({\alpha}) \tsp {d}{\alpha}
= \int_{\mathfrak{M}(\vartheta_0) } S({\alpha}) \tsp {d}{\alpha} + \int_{\mathfrak{m}(\vartheta_0) } S({\alpha}) \tsp {d}{\alpha}.
\end{eqnarray}

We now describe the ideas behind our approach in a non-rigorous manner.
Let us loosely refer to $n - \dim V_F^*$ as the `rank' of $F$.
Consider a partition of variables $\mathbf{x} = (\mathbf{u}, \mathbf{v})$. Let us write
$F(\mathbf{u}, \mathbf{v}) = F_{\mathbf{u}}(\mathbf{u}) + G(\mathbf{u}, \mathbf{v}) + F_{\mathbf{v}}(\mathbf{v})$, where
$F_{\mathbf{u}}$ and $F_{\mathbf{v}}$ are portions of $F$ consisting only of monomials in $\mathbf{u}$ and $\mathbf{v}$ respectively, and
every monomial of $G$ consists of variables from both $\mathbf{u}$ and $\mathbf{v}$.
We apply the Cauchy-Schwarz inequality twice to $S(\alpha)$, first with respect to
$\mathbf{u}$ and then $\mathbf{v}$, to remove all the $\Lambda$-weights, and obtain
$$
|S(\alpha)|^4
\ll
(N \log N)^{2n} \Big{|} \sum_{\mathbf{u}, {\mathbf{u}'} \in [0, N]^{m}} \ \sum_{\mathbf{v}, {\mathbf{v}'} \in [0, N]^{n - m}} e(\alpha
(F(\mathbf{u}, \mathbf{v}) - F(\mathbf{u}, \mathbf{v}') - F(\mathbf{u}', \mathbf{v}) + F(\mathbf{u}', \mathbf{v}') ) )\Big{|}.
$$
It turns out that the rank of
$F(\mathbf{u}, \mathbf{v}) - F(\mathbf{u}, \mathbf{v}') - F(\mathbf{u}', \mathbf{v}) + F(\mathbf{u}', \mathbf{v}')$
is governed by the rank of $G$. Thus
if $G$ has a large rank, then we easily obtain a suitable estimate for $|S(\alpha)|$ by appealing to the work of Birch \cite{B}.
The challenge lies when such a convenient partition of the $\mathbf{x}$ variables, for which $G$ has a large rank, does not exist.
An extreme example where such a partition does not exist is the diagonal case
$F(\mathbf{x}) = A_1 x_1^d + \cdots + A_n x_n^d$, but in general we can not assume it has 
an additive structure as in this example. When no such partition exists, it essentially implies that the rank is `well-distributed'
amongst the variables, and this makes it possible to prove a pigeonhole principle type estimate regarding the rank; given any $k \in \mathbb{N}$ and a partition $\mathbf{x} = (\mathbf{u}_1, \ldots, \mathbf{u}_k)$, there exists at least one $1 \leq \ell \leq k$
such that $F |_{\mathbf{u}_j = \mathbf{0} \ (j \not = \ell)}$ has a `large' rank.
This condition becomes useful when we apply Vaughan's identity,
because it allows us to reduce our study of the exponential sum to the case where all of the weights are either of Type I or Type II component.
When all the weights are of Type I component, we appeal to the work of Schindler and Sofos \cite{SS}, where they handle similar exponential sums
defined over uneven boxes. On the other hand, when all the weights are of Type II component, the main input to treat this case is the work of the author regarding semiprime solutions to polynomial equations \cite{Y1};
the difference being the exponential sum is now over a product of hyperbolic regions instead of a box.
We overcome this technical challenge with a modest cost on the rank of $F$.
Our approach to the minor arcs estimate is completely different from
that of Cook and Magyar, which is based on mean value estimates. One way to consider
their approach is that they were able to make the strategy used for the additive case work
for this problem involving homogeneous forms, but with a large cost on the rank, while our approach
incorporates the work of Birch in this setting.
However, our method requires $\vartheta_0 \gg 1$ which is considerably larger compared to
$\vartheta_0 = \frac{C \log \log N}{ \log N}$ for some fixed $C > 0$ in \cite{CM}. For this reason we can not appeal to the Siegel-Walfisz theorem in our major arcs analysis as in their approach. Instead we make use of the Dirichlet characters and the zero-density estimates of the Dirichlet $L$-functions.
There has been much work regarding enlarging the major arcs for additive problems concerning primes (for example, see \cite{L, MV, R});
in contrast, we establish an analogous result for equations involving homogeneous forms.


\section{Preliminaries}
\label{prem}
Given a homogeneous form $F \in \mathbb{C}[x_1, \ldots, x_n]$, we let
$\textnormal{codim}_{\mathbb{A}^n_{\mathbb{C}}} V_{F}^* = n - \dim V_F^*$; this is the codimension of $V_{F}^*$ as
a subvariety of $\mathbb{A}^n_{\mathbb{C}}$. Let $a_1, \ldots, a_n \in \mathbb{C} \backslash \{ 0 \}$ and
$\mathcal{G}(\mathbf{x}) =  F(a_1 x_1, \ldots, a_n x_n)$. By the chain rule  it follows that
\begin{eqnarray}
\label{chain rule codim}
\textnormal{codim}_{\mathbb{A}^n_{\mathbb{C}}} V_{F}^* = \textnormal{codim}_{\mathbb{A}^n_{\mathbb{C}}} V_{\mathcal{G}}^*.
\end{eqnarray}
It can be verified easily that for any $r \geq n$ we have
\begin{eqnarray}
\label{dontmatter}
\textnormal{codim}_{\mathbb{A}^r_{\mathbb{C}}} V_{F}^* = \textnormal{codim}_{\mathbb{A}^{n}_{\mathbb{C}}} V_{F}^*,
\end{eqnarray}
where
$$
\textnormal{codim}_{\mathbb{A}^r_{\mathbb{C}}} V_{F}^* =  r - \dim
\Big{\{} (z_1, \ldots, z_r) \in \mathbb{C}^r: \frac{\partial F}{\partial x_i}(z_1, \ldots, z_n) = 0 \ (1 \leq i \leq r)   \Big{\}},
$$
i.e. on the left hand side of (\ref{dontmatter}) we are considering $F$ as an element of $\mathbb{C}[x_1, \ldots, x_r]$ which does not depend on $x_{r+1}, \ldots, x_n$.

We also have the following lemma regarding the codimension of $V_{F}^*$.
\begin{lem}
\label{Lemma on the B rank}
Let $F \in \mathbb{C}[x_1, \ldots, x_n]$ be a homogeneous form of degree $d \geq 2$. Let $0 \leq s < n$ and set
$$
\mathfrak{F}( x_{s + 1}, \ldots, x_{n}) =
F( 0, \ldots, 0,  x_{s + 1}, \ldots, x_{n}).
$$
Then we have
$$
\textnormal{codim}_{\mathbb{A}_{\mathbb{C}}^{n}} V_{F}^*  - 2 s \leq \textnormal{codim}_{\mathbb{A}_{\mathbb{C}}^{n-s}} V_{\mathfrak{F}}^* \leq \textnormal{codim}_{\mathbb{A}_{\mathbb{C}}^{n}} V_{F}^*.
$$
\end{lem}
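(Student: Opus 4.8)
The plan is to argue entirely inside $\mathbb{A}^n_{\mathbb{C}}$. First I would invoke \eqref{dontmatter} to regard $\mathfrak{F}$ as the form in $\mathbb{C}[x_1,\ldots,x_n]$ obtained from $F$ by setting $x_1=\cdots=x_s=0$, so that $\textnormal{codim}_{\mathbb{A}^{n-s}_{\mathbb{C}}}V_{\mathfrak{F}}^*=\textnormal{codim}_{\mathbb{A}^{n}_{\mathbb{C}}}V_{\mathfrak{F}}^*$ and the assertion becomes $\dim V_F^*\le \dim V_{\mathfrak{F}}^*\le \dim V_F^*+2s$. Since $\mathfrak{F}$ does not involve $x_1,\ldots,x_s$, its first $s$ partials vanish identically while the remaining ones depend only on $x_{s+1},\ldots,x_n$; hence $V_{\mathfrak{F}}^*=\mathbb{A}^s_{\mathbb{C}}\times W$, where $W\subseteq\mathbb{A}^{n-s}_{\mathbb{C}}$ is the common zero set of $(\partial F/\partial x_i)(0,\ldots,0,x_{s+1},\ldots,x_n)$ for $s+1\le i\le n$. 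Thus $\dim V_{\mathfrak{F}}^*=s+\dim W$, and it suffices to establish $\dim V_F^*-s\le \dim W\le \dim V_F^*+s$.

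The single tool I would rely on is the following consequence of Krull's principal ideal theorem: if $Z\subseteq\mathbb{A}^N_{\mathbb{C}}$ is a nonempty closed cone and $g_1,\ldots,g_k$ are polynomials each either identically zero or homogeneous of positive degree, then $Z\cap V(g_1)\cap\cdots\cap V(g_k)$ is again a nonempty closed cone of dimension at least $\dim Z-k$. Next I would record that $V_F^*$, $W$, and $V_F^*\cap L$ — where $L=\{x_1=\cdots=x_s=0\}$, identified with $\mathbb{A}^{n-s}_{\mathbb{C}}$ — are all nonempty closed cones, because $\nabla F$ is homogeneous of degree $d-1\ge1$ and hence vanishes at the origin.

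For $\dim W\ge \dim V_F^*-s$ (the right inequality of the lemma): cutting the cone $V_F^*$ with the coordinate hyperplanes $x_1=0,\ldots,x_s=0$ gives $\dim(V_F^*\cap L)\ge \dim V_F^*-s$ by the tool, and $V_F^*\cap L\subseteq W$ since the equations defining $W$ inside $L$ form a subset of those defining $V_F^*\cap L$. For $\dim W\le \dim V_F^*+s$ (the left inequality): writing $g_j=(\partial F/\partial x_j)(0,\ldots,0,x_{s+1},\ldots,x_n)$ for $1\le j\le s$, one checks directly from the definitions that $V_F^*\cap L=W\cap V(g_1)\cap\cdots\cap V(g_s)$ inside $\mathbb{A}^{n-s}_{\mathbb{C}}$, so the tool gives $\dim(V_F^*\cap L)\ge \dim W-s$, while $\dim(V_F^*\cap L)\le \dim V_F^*$ is trivial; combining the two bounds closes the argument.

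The one delicate point I would be careful to get right is the nonemptiness that legitimizes "each hypersurface section drops the dimension by at most one": this fails for an arbitrary section, but holds here precisely because every variety in sight is a cone through the origin and each $g_j$ (and each coordinate function) vanishes there. I would also note an alternative, less elementary route to the inequality $\textnormal{codim}\,V_{\mathfrak{F}}^*\le\textnormal{codim}\,V_F^*$ — deform via $F_t(\mathbf{x})=F(tx_1,\ldots,tx_s,x_{s+1},\ldots,x_n)$, observe that $\textnormal{codim}\,V_{F_t}^*$ is independent of $t\ne0$ by \eqref{chain rule codim}, and apply upper semicontinuity of fibre dimension in the incidence variety $\{(t,\mathbf{z}):\nabla F_t(\mathbf{z})=\mathbf{0}\}$ at $t=0$ — but I would prefer the cone computation above, since it handles both inequalities uniformly and cites nothing beyond Krull.
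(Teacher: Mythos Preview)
Your proof is correct. The upper bound argument is essentially the paper's: the paper treats $s=1$ by observing $V_F^*\cap\{x_1=0\}\subseteq\{0\}\times V_{\mathfrak{F}}^*$ and that a hyperplane section drops dimension by at most one, then iterates; you do the same cut by $L=\{x_1=\cdots=x_s=0\}$ in one step and phrase the dimension drop via Krull on cones. For the lower bound the paper simply cites \cite[Lemma~3.1]{SS}, whereas you supply a self-contained argument---the identity $V_F^*\cap L=W\cap V(g_1)\cap\cdots\cap V(g_s)$ together with Krull on the cone $W$ gives $\dim(V_F^*\cap L)\ge\dim W-s$, which combined with the trivial bound $\dim(V_F^*\cap L)\le\dim V_F^*$ closes the inequality. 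Your remark that nonemptiness is guaranteed by the cone structure (everything passes through the origin since $d\ge2$) is exactly the point that makes the repeated hypersurface cuts legitimate, and it is good that you flag it.
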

\begin{proof}
We refer the reader to \cite[Lemma 3.1]{SS} for the lower bound. We now prove the upper bound when $s=1$; the upper bound when $s > 1$
follows easily from this case. It follows from the definition of the singular locus that
$$
V_{F}^* \cap \{ \mathbf{x} \in \mathbb{C}^n : x_1 = 0 \} \subseteq \{0\} \times V_{\mathfrak{F}}^* \subseteq \mathbb{A}_{\mathbb{C}}^n.
$$
Since the dimension of $V_{F}^* \cap \{ \mathbf{x} \in \mathbb{C}^n : x_1 = 0 \}$ is either $\dim V_F^* - 1$ or $\dim V_{F}^*$, we have
$\dim V_F^* - 1 \leq \dim V_{\mathfrak{F}}^*$. Therefore, we obtain
$$
\textnormal{codim}_{\mathbb{A}_{\mathbb{C}}^{n-1}} V_{\mathfrak{F}}^*  = n - 1 - \dim V_{\mathfrak{F}}^*     \leq  n - \dim V_F^* = \textnormal{codim}_{\mathbb{A}_{\mathbb{C}}^{n}} V_{F}^*.
$$

\end{proof}
Suppose we have a partition of variables $\mathbf{x} = (\mathbf{u}, \mathbf{v})$, where $\mathbf{u} = (u_1, \ldots, u_m)$
and $\mathbf{v} = (v_{m + 1}, \ldots, v_n)$. Let us denote $F_{\mathbf{u}}(\mathbf{u}) = F(\mathbf{u}, \mathbf{0})$,
$F_{\mathbf{v}}(\mathbf{v}) = F(\mathbf{0}, \mathbf{v})$ and $G_{\mathbf{u}, \mathbf{v}}(\mathbf{u}, \mathbf{v}) = F(\mathbf{u}, \mathbf{v}) - F(\mathbf{u}, \mathbf{0}) - F(\mathbf{0}, \mathbf{v})$.
With these notation we have the following lemma.
\begin{lem}
\label{lem on the split of B rank}
Let $F \in \mathbb{C}[x_1, \ldots, x_n]$ be a homogeneous form of degree $d \geq 2$.
Then we have
$$
\textnormal{codim}_{\mathbb{A}_{\mathbb{C}}^{n}} V^*_{F} \leq \textnormal{codim}_{\mathbb{A}_{\mathbb{C}}^{m}} V^*_{F_{\mathbf{u}}} + \textnormal{codim}_{\mathbb{A}_{\mathbb{C}}^{n}} V^*_{G_{\mathbf{u}, \mathbf{v}}} + \textnormal{codim}_{\mathbb{A}_{\mathbb{C}}^{n-m}} V^*_{F_{\mathbf{v}}}.
$$
\end{lem}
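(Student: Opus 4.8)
The plan is to bound the singular locus of $F$ by separately controlling the ``mixed'' part $G_{\mathbf{u},\mathbf{v}}$ and the two ``pure'' parts $F_{\mathbf{u}}, F_{\mathbf{v}}$, via a dimension count on the gradient variety. Write $\nabla F = (\partial_{u_1} F, \ldots, \partial_{u_m} F, \partial_{v_{m+1}} F, \ldots, \partial_{v_n} F)$. Since $F(\mathbf{u},\mathbf{v}) = F_{\mathbf{u}}(\mathbf{u}) + G_{\mathbf{u},\mathbf{v}}(\mathbf{u},\mathbf{v}) + F_{\mathbf{v}}(\mathbf{v})$, we have for $1 \le i \le m$
$$
\frac{\partial F}{\partial u_i}(\mathbf{u},\mathbf{v}) = \frac{\partial F_{\mathbf{u}}}{\partial u_i}(\mathbf{u}) + \frac{\partial G_{\mathbf{u},\mathbf{v}}}{\partial u_i}(\mathbf{u},\mathbf{v}),
$$
and symmetrically for the $v_j$-derivatives. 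First I would observe that a point $(\mathbf{a},\mathbf{b}) \in V_F^*$ with $\mathbf{a} = \mathbf{0}$ forces $\nabla G_{\mathbf{u},\mathbf{v}}(\mathbf{0},\mathbf{b}) = \mathbf{0}$ (all monomials of $G$ contain a $u$-variable, so every partial of $G$ vanishes at $\mathbf{u}=\mathbf{0}$) together with $\nabla F_{\mathbf{v}}(\mathbf{b}) = \mathbf{0}$; thus $V_F^* \cap \{\mathbf{u}=\mathbf{0}\} \subseteq \{\mathbf{0}\} \times V_{F_{\mathbf{v}}}^*$, and symmetrically on the other face. This is the same mechanism already used in the upper bound of Lemma~\ref{Lemma on the B rank}.

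The cleaner route, which I would actually pursue, is to realize all three varieties inside $\mathbb{A}^n_{\mathbb{C}}$ using (\ref{dontmatter}) and then intersect. Consider the subvariety
$$
W = \Big\{ (\mathbf{u},\mathbf{v}) \in \mathbb{C}^n : \nabla F_{\mathbf{u}}(\mathbf{u}) = \mathbf{0},\ \nabla G_{\mathbf{u},\mathbf{v}}(\mathbf{u},\mathbf{v}) = \mathbf{0},\ \nabla F_{\mathbf{v}}(\mathbf{v}) = \mathbf{0} \Big\}.
$$
By the identity for $\nabla F$ above, $W \subseteq V_F^*$, so $\dim V_F^* \ge \dim W$. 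On the other hand, $W$ is cut out by the three gradient conditions, so it is contained in the intersection of $V_{F_{\mathbf{u}}}^* \times \mathbb{C}^{n-m}$, $V_{G_{\mathbf{u},\mathbf{v}}}^*$, and $\mathbb{C}^m \times V_{F_{\mathbf{v}}}^*$, all viewed as subvarieties of $\mathbb{A}^n_{\mathbb{C}}$. By (\ref{dontmatter}) the first has codimension $\textnormal{codim}_{\mathbb{A}^m_{\mathbb{C}}} V^*_{F_{\mathbf{u}}}$ in $\mathbb{A}^n_{\mathbb{C}}$, the third has codimension $\textnormal{codim}_{\mathbb{A}^{n-m}_{\mathbb{C}}} V^*_{F_{\mathbf{v}}}$, and the middle one has codimension $\textnormal{codim}_{\mathbb{A}^n_{\mathbb{C}}} V^*_{G_{\mathbf{u},\mathbf{v}}}$. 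Using subadditivity of codimension under intersection of subvarieties of affine space, $\textnormal{codim}_{\mathbb{A}^n_{\mathbb{C}}} W$ is at most the sum of these three codimensions, i.e.
$$
n - \dim W \le \textnormal{codim}_{\mathbb{A}^m_{\mathbb{C}}} V^*_{F_{\mathbf{u}}} + \textnormal{codim}_{\mathbb{A}^n_{\mathbb{C}}} V^*_{G_{\mathbf{u},\mathbf{v}}} + \textnormal{codim}_{\mathbb{A}^{n-m}_{\mathbb{C}}} V^*_{F_{\mathbf{v}}}.
$$
Combining with $\dim V_F^* \ge \dim W$ gives $\textnormal{codim}_{\mathbb{A}^n_{\mathbb{C}}} V_F^* = n - \dim V_F^* \le n - \dim W$, which is the claimed inequality.

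The main obstacle is the inclusion $W \subseteq V_F^*$ being strict in general, so I must be careful that I only need $\dim V_F^* \ge \dim W$ (a one-sided bound), which is exactly what the argument delivers — no reverse containment is needed. The one genuinely non-formal ingredient is the subadditivity of codimension under intersection in affine space; I would either cite the standard fact that $\textnormal{codim}(Y_1 \cap \cdots \cap Y_k) \le \sum \textnormal{codim}(Y_i)$ for subvarieties of $\mathbb{A}^N_{\mathbb{C}}$ (a consequence of Krull's height theorem applied to the defining equations, or of the affine dimension theorem applied iteratively), or reduce to it via the diagonal trick. A secondary point to verify carefully is the bookkeeping in (\ref{dontmatter}): $F_{\mathbf{u}}$ genuinely does not involve $v_{m+1},\ldots,v_n$, so its gradient variety in $\mathbb{A}^n_{\mathbb{C}}$ is $V^*_{F_{\mathbf{u}}} \times \mathbb{C}^{n-m}$ and its codimension is unchanged; likewise for $F_{\mathbf{v}}$. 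With those two facts in hand the proof is a short dimension count.
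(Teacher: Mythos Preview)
Your proposal is correct and is essentially the same argument as the paper's: your variety $W$ is precisely the paper's $V(I_1+I_2+J)$ (the common zero locus of the three gradients), and both proofs then apply subadditivity of codimension under intersection in $\mathbb{A}^n_{\mathbb{C}}$ together with (\ref{dontmatter}) to identify the three codimensions. The only cosmetic difference is that the paper phrases things in the language of ideals while you phrase them in terms of intersections of subvarieties; the initial observation about the face $\{\mathbf{u}=\mathbf{0}\}$ is not needed and the paper does not make it.
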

\begin{proof}
Given $f_1, \ldots, f_r \in \mathbb{C}[x_1, \ldots, x_n]$ let $(f_1, \ldots, f_r)$ denote the ideal in $\mathbb{C}[x_1, \ldots, x_n]$
generated by $f_1, \ldots, f_r$. Also given $I \subseteq \mathbb{C}[x_1, \ldots, x_n]$ we let $V(I) = \{ \mathbf{x} \in \mathbb{C}^n :
f(\mathbf{x}) = 0 \ (f \in I)  \}$.
We define the following homogeneous forms:
$$
\lambda_i (\mathbf{u}, \mathbf{v}) = \frac{\partial F}{ \partial u_i}  \ \ (1 \leq i \leq m), \ \ \text{       } \ \ \lambda_j (\mathbf{u}, \mathbf{v}) = \frac{\partial F}{ \partial v_j} \ \ (m + 1 \leq j \leq n),
$$
$$
\widetilde{\lambda}_i (\mathbf{u}) = \frac{\partial F_{\mathbf{u}}}{ \partial u_i}  \ \ (1 \leq i \leq m), \ \ \text{       } \ \ \widetilde{\lambda}_j (\mathbf{v}) = \frac{\partial F_{\mathbf{v}}}{ \partial v_j} \ \ (m + 1 \leq j \leq n),
$$
$$
\tau_i (\mathbf{u}, \mathbf{v}) = \frac{\partial G_{\mathbf{u}, \mathbf{v}}}{ \partial u_i}  \ \ (1 \leq i \leq m) \ \ \text{  and  } \ \ \tau_j (\mathbf{u}, \mathbf{v}) = \frac{\partial G_{\mathbf{u}, \mathbf{v}}}{ \partial v_j} \ \ (m + 1 \leq j \leq n).
$$
Clearly $\lambda_i = \widetilde{\lambda}_i + \tau_i$ $(1 \leq i \leq n)$.
Let us define the following ideals: $I_1 = (\widetilde{\lambda}_1, \ldots, \widetilde{\lambda}_m)$, $I_2 = (\widetilde{\lambda}_{m+1}, \ldots, \widetilde{\lambda}_n)$ and $J = (\tau_1, \ldots, \tau_n)$.
Then we have
$$
V(I_1 + I_2 + J) \subseteq V(\lambda_1, \ldots, \lambda_n) = V_F^* \subseteq \mathbb{A}_{\mathbb{C}}^n.
$$
Therefore, it follows that
\begin{eqnarray}
\notag
\textnormal{codim}_{\mathbb{A}_{\mathbb{C}}^{n}} V^*_{F} &\leq&  \textnormal{codim}_{\mathbb{A}_{\mathbb{C}}^{n}} V(I_1 + I_2 + J)
\\
\notag
&\leq& \textnormal{codim}_{\mathbb{A}_{\mathbb{C}}^{n}} V(I_1)  + \textnormal{codim}_{\mathbb{A}_{\mathbb{C}}^{n}} V(I_2)  + \textnormal{codim}_{\mathbb{A}_{\mathbb{C}}^{n}} V(J)
\\
\notag
&=& \textnormal{codim}_{\mathbb{A}_{\mathbb{C}}^{m}} V_{F_{\mathbf{u}}}^*  + \textnormal{codim}_{\mathbb{A}_{\mathbb{C}}^{n-m}} V_{F_{\mathbf{v}}}^* + \textnormal{codim}_{\mathbb{A}_{\mathbb{C}}^{n}} V_{G_{\mathbf{u}, \mathbf{v}}}^*,
\end{eqnarray}
where we obtained the final equality via (\ref{dontmatter}).
\end{proof}

We shall refer to $\mathfrak{B} \subseteq \mathbb{R}^m$ as a box if $\mathfrak{B}$ is of the form
$
\mathfrak{B} = I_1 \times \cdots \times I_m,
$
where each $I_j$ is a closed, open or half open/closed interval $(1 \leq j \leq m)$.
Let $\mathfrak{G}(\mathbf{u};\mathbf{v})$ be a polynomial in variables $\mathbf{u}= (u_1, \ldots, u_{m_1})$ and $\mathbf{v}= (v_1, \ldots, v_{m_2})$.
We say $\mathfrak{G}(\mathbf{u};\mathbf{v})$ is bihomogeneous of bidegree $(d_1, d_2)$ if
$$
\mathfrak{G}( s u_1, \ldots, s u_{m_1} ; t v_1, \ldots, t v_{m_2} ) = s^{d_1} t^{d_2} \mathfrak{G}(\mathbf{u} ; \mathbf{v}).
$$
We define the following affine varieties in $\mathbb{A}_{\mathbb{C}}^{m_1 + m_2}$:
\begin{equation}
\label{singlocbhmg}
V_{\mathfrak{G}, 1}^* = \left\{ (\mathbf{u}, \mathbf{v})  \in  \mathbb{C}^{m_1 + m_2} :
\frac{\partial \mathfrak{G}}{\partial u_i} (\mathbf{u}, \mathbf{v})
= 0 \ (1 \leq i \leq m_1) \right\}
\end{equation}
and
\begin{equation}
\label{singlocbhmg2}
V_{\mathfrak{G}, 2}^* = \left\{ (\mathbf{u}, \mathbf{v})  \in  \mathbb{C}^{m_1 + m_2} :
\frac{\partial \mathfrak{G}}{\partial v_i} (\mathbf{u}, \mathbf{v})
= 0 \ (1 \leq i \leq m_2) \right\};
\end{equation}
we take the partial derivatives with respect to the first set of variables (in the notation of $\mathfrak{G}$) for $V_{\mathfrak{G}, 1}^*$,
and with respect to the second set of variables for $V_{\mathfrak{G}, 2}^*$.
The following result was the key estimate in establishing the main result in \cite{Y1}.
\begin{prop}
\label{semiprimeprop} \cite[Theorem 5.1]{Y1}
Let $\mathfrak{F} \in \mathbb{C}[v_1, \ldots, v_m]$ be a homogeneous form of degree $d \geq 2$.
Let us define a bihomogeneous form
$$
\mathfrak{G}(\mathbf{u};\mathbf{v}) = \mathfrak{F}(u_1 v_1, \ldots, u_m v_m).
$$
Then we have
$$
\min \{ \textnormal{codim}_{\mathbb{A}^{2m}_{\mathbb{C}}}V^*_{\mathfrak{G}, 1}, \textnormal{codim}_{\mathbb{A}^{2m}_{\mathbb{C}}}V^*_{\mathfrak{G}, 2} \} \geq \frac{\textnormal{codim}_{\mathbb{A}^{m}_{\mathbb{C}}} V^*_{\mathfrak{F}}}{2}.
$$
\end{prop}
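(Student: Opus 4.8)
The plan is to deduce this from the single-block estimate in Lemma~\ref{Lemma on the B rank} by stratifying $V_{\mathfrak{G},1}^*$ according to which of the coordinates $v_1,\ldots,v_m$ vanish. Since $\mathfrak{G}(\mathbf{u};\mathbf{v})=\mathfrak{F}(u_1v_1,\ldots,u_mv_m)$ is symmetric under the exchange $\mathbf{u}\leftrightarrow\mathbf{v}$, and this exchange is an automorphism of $\mathbb{A}^{2m}_{\mathbb{C}}$ carrying $V_{\mathfrak{G},1}^*$ to $V_{\mathfrak{G},2}^*$, it suffices to bound $\textnormal{codim}_{\mathbb{A}^{2m}_{\mathbb{C}}}V_{\mathfrak{G},1}^*$ from below by $c/2$, where I abbreviate $c=\textnormal{codim}_{\mathbb{A}^m_{\mathbb{C}}}V_{\mathfrak{F}}^*$. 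Writing $\partial_i\mathfrak{F}$ for the partial derivative of $\mathfrak{F}$ in its $i$-th variable, the chain rule gives $\partial\mathfrak{G}/\partial u_i=v_i\,(\partial_i\mathfrak{F})(u_1v_1,\ldots,u_mv_m)$, so $(\mathbf{u},\mathbf{v})\in V_{\mathfrak{G},1}^*$ exactly when, for every $i$, either $v_i=0$ or $(\partial_i\mathfrak{F})(u_1v_1,\ldots,u_mv_m)=0$.

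Grouping the points of $V_{\mathfrak{G},1}^*$ by the set $S=\{i:v_i=0\}$ then gives the finite union $V_{\mathfrak{G},1}^*=\bigcup_{S\subseteq\{1,\ldots,m\}}W_S$, where
$$
W_S=\bigl\{(\mathbf{u},\mathbf{v}):\ v_i=0\ (i\in S),\ (\partial_i\mathfrak{F})(u_1v_1,\ldots,u_mv_m)=0\ (i\notin S)\bigr\};
$$
the inclusion $V_{\mathfrak{G},1}^*\subseteq\bigcup_S W_S$ uses that $v_i\neq0$ for $i\notin S$, and the reverse inclusion is immediate, so $\textnormal{codim}_{\mathbb{A}^{2m}_{\mathbb{C}}}V_{\mathfrak{G},1}^*=\min_S\textnormal{codim}_{\mathbb{A}^{2m}_{\mathbb{C}}}W_S$. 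Fix $S$ and put $T=\{1,\ldots,m\}\setminus S$. On $W_S$ one has $u_jv_j=0$ for $j\in S$, hence for $i\in T$ the function $(\partial_i\mathfrak{F})(u_1v_1,\ldots,u_mv_m)$ agrees with $(\partial_i\mathfrak{F}^{(S)})\bigl((u_jv_j)_{j\in T}\bigr)$, where $\mathfrak{F}^{(S)}$ denotes the restriction of $\mathfrak{F}$ obtained by setting the variables indexed by $S$ equal to $0$ (restriction commutes with $\partial_i$ for $i\in T$, since disjoint sets of variables are involved). Thus the coordinates $u_j$ with $j\in S$ are free on $W_S$, while the remaining equations say precisely that $(u_jv_j)_{j\in T}\in V_{\mathfrak{F}^{(S)}}^*\subseteq\mathbb{A}^{|T|}_{\mathbb{C}}$. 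Let $\phi_T\colon\mathbb{A}^{2|T|}_{\mathbb{C}}\to\mathbb{A}^{|T|}_{\mathbb{C}}$ be the coordinatewise product map; every fibre of $\phi_T$ is a product of $|T|$ affine plane curves $\{uv=c\}$ and so has dimension $|T|$, whence standard dimension theory gives $\dim\phi_T^{-1}(Z)\le|T|+\dim Z$ for every closed $Z$, so that
$$
\dim W_S\ \le\ |S|+\dim\phi_T^{-1}\bigl(V_{\mathfrak{F}^{(S)}}^*\bigr)\ \le\ |S|+|T|+\dim V_{\mathfrak{F}^{(S)}}^*\ =\ m+\dim V_{\mathfrak{F}^{(S)}}^*.
$$

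To conclude I would feed in Lemma~\ref{Lemma on the B rank}, applied to $\mathfrak{F}$ with the $|S|$ variables indexed by $S$ set to zero (legitimate up to relabelling provided $T\neq\emptyset$): it yields $\textnormal{codim}_{\mathbb{A}^{|T|}_{\mathbb{C}}}V_{\mathfrak{F}^{(S)}}^*\ge c-2|S|$, and this codimension is of course also $\ge0$, so $\dim V_{\mathfrak{F}^{(S)}}^*\le|T|-\max(0,\,c-2|S|)$ and therefore $\textnormal{codim}_{\mathbb{A}^{2m}_{\mathbb{C}}}W_S\ge|S|+\max(0,\,c-2|S|)$. (When $T=\emptyset$ one simply has $W_S=\{\mathbf{v}=\mathbf{0}\}$, of codimension $m\ge c\ge c/2$.) A case split on the sign of $c-2|S|$ now finishes the argument: if $2|S|\le c$ then $|S|+\max(0,c-2|S|)=c-|S|\ge c/2$, while if $2|S|\ge c$ then $|S|+\max(0,c-2|S|)\ge|S|\ge c/2$. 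Taking the minimum over $S$ gives $\textnormal{codim}_{\mathbb{A}^{2m}_{\mathbb{C}}}V_{\mathfrak{G},1}^*\ge c/2$, and the opening symmetry remark gives the same bound for $V_{\mathfrak{G},2}^*$.

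The steps I expect to be most delicate are, first, checking that the two operations — restricting $\mathfrak{F}$ to a coordinate subspace and substituting $u_jv_j$ for its variables — commute as claimed, so that the equations cutting out $W_S$ really are the vanishing of $\nabla\mathfrak{F}^{(S)}$ pulled back along $\phi_T$; and second, the bookkeeping in the final case split. The point to get right there is that the regime in which $|S|$ is large, where $\mathfrak{F}^{(S)}$ retains little of the codimension of $\mathfrak{F}$ and Lemma~\ref{Lemma on the B rank} is essentially vacuous, is nonetheless harmless precisely because forcing $|S|$ of the $v_i$ to vanish already contributes $|S|$ to $\textnormal{codim}\,W_S$; the break-even point $|S|=c/2$ is what produces the factor $1/2$. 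The chain-rule identity, the stratification, and the fibre-dimension bound for $\phi_T$ are otherwise routine.
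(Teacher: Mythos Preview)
Your proof is correct. The paper itself does not supply a proof of this proposition: it simply quotes \cite[Theorem 5.1]{Y1} and remarks that the argument there carries over from $\mathbb{Z}$ to $\mathbb{C}$. Your argument, by contrast, is self-contained within the present paper, using only Lemma~\ref{Lemma on the B rank} and elementary dimension theory.

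A brief comparison of what your approach buys: by stratifying $V^*_{\mathfrak{G},1}$ according to the vanishing pattern $S=\{i:v_i=0\}$, you reduce the problem to bounding $\dim W_S$ for each $S$, and then exploit the key trade-off that the $|S|$ equations $v_i=0$ already contribute $|S|$ to the codimension of $W_S$, while Lemma~\ref{Lemma on the B rank} controls the loss of singular-locus codimension incurred by restricting $\mathfrak{F}$ to the complementary coordinate subspace. The break-even point $|S|=c/2$ is exactly what produces the factor $1/2$ in the statement. This is clean and transparent; the two points you flagged as delicate (commutation of restriction with $\partial_i$ for $i\in T$, and the fibre-dimension bound for $\phi_T$) are indeed routine once stated carefully. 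The only place where one might want an extra sentence is the step $\dim\phi_T^{-1}(Z)\le|T|+\dim Z$: it follows from the fact that every fibre of $\phi_T$ has dimension exactly $|T|$ together with the standard inequality $\dim W\le\dim\overline{\phi_T(W)}+(\text{generic fibre dimension of }\phi_T|_W)$ for each irreducible component $W$ of $\phi_T^{-1}(Z)$, but it is worth making this explicit.
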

\begin{rem}
\label{dontmatter2}
In the statement of \cite[Theorem 5.1]{Y1}, the homogneous form in consideration has coefficients in $\mathbb{Z}$; however,
it can be seen from the proof that the result holds with $\mathbb{C}$ in place of $\mathbb{Z}$.
For a bihomogeneous form of the shape $\mathfrak{G}(\mathbf{u};\mathbf{v})$ as in Proposition \ref{semiprimeprop}, by symmetry we have
$V_{\mathfrak{G}, 1}^* \cong V_{\mathfrak{G}, 2}^*.$
In particular, it follows that
$$
\textnormal{codim}_{\mathbb{A}^{2m}_{\mathbb{C}}}V^*_{\mathfrak{G}, 1}  = 2m  - \dim V_{\mathfrak{G}, 1}^* =2m -  \dim V_{\mathfrak{G}, 2}^* = \textnormal{codim}_{\mathbb{A}^{2m}_{\mathbb{C}}}V^*_{\mathfrak{G}, 2}.
$$

\end{rem}
From here on we drop the subscript from the notation for the codimension; it will always be interpreted as a
subvariety of an appropriate affine space, and there is no ambiguity because of (\ref{dontmatter}).
The next lemma is an exponential sum estimate obtained by Schindler and Sofos in \cite{SS}; it handles a more general setting than in the work of Birch \cite{B}. Given $f \in \mathbb{C}[x_1, \ldots, x_m]$ let $\| f \|$ be the maximum of the absolute values of its coefficients,
and let $f^{[j]}$ denote the degree $j$ homogeneous portion of $f$.
\begin{lem}\cite[Lemma 2.5]{SS}
\label{Lemma 2.5 in SS}
Let $f \in \mathbb{Z}[v_1, \ldots, v_m]$ be a  polynomial of degree $d \geq 2$.
Let $\mathfrak{c}, \delta, \delta' > 0$ and $(z_1, \ldots, z_m ) \in (0,1)^m$, where $\delta$ is
sufficiently small with respect to $(z_1, \ldots, z_m )$.  Let $\omega \in \mathcal{S}^+( \delta ; 1; \mathfrak{c})$, $\epsilon \in \{0 , 1 \}$ and $\mathbf{u} \in [1, N^{\delta'}]^m$. We consider the exponential sum
$$
\widetilde{T}({\alpha}) = \sum_{ \substack{ 1 \leq  v_j \leq N/u_j  \\ (1 \leq j \leq m) }}
 \prod_{\ell=1}^m (\log v_{\ell})^{\epsilon} \
\omega \left( \frac{v_{\ell} u_{\ell}}{N} - z_{\ell}  \right) \cdot
e \left(  \alpha f(\mathbf{v} )  \right).
$$
Let
$$
\widetilde{K} = \frac{ \textnormal{codim} \tsp V^*_{  f^{[d]} } }{ 2^{d - 1} }.
$$
Let $0 < \vartheta < 1$. We denote $P_{\max} = \max_{1 \leq j \leq m} (N/u_j)$ and  $P_{\min} = \min_{1 \leq j \leq m} (N/ u_j)$.
Then for any $\varepsilon > 0$ at least one of the following alternatives holds:

\textnormal{i)} One has the upper bound
$$
|\widetilde{T}({\alpha})| \ll \left( \prod_{j=1}^m \frac{N}{u_j}  \right)^{1 + \varepsilon}
\left( \frac{ P_{\max}}{P_{\min}} \right)^{  \widetilde{K} } P_{\max}^{- \widetilde{K}  \vartheta}.
$$

\textnormal{ii)} There exist $1 \leq q \leq \| f^{[d]} \| P_{\max}^{(d-1) \vartheta}$ and $a \in \mathbb{Z}$ with $\gcd(a,q)=1$
such that
$$
| q \alpha - a | \leq P_{\min}^{-1} P_{\max}^{-(d-1) + (d-1) \vartheta}.
$$
Here the implicit constant is independent of $\vartheta$, $\mathbf{u}$ and the coefficients of $f$,
but it may depend on $\varepsilon$, $\mathfrak{c}$, $\delta,$ $\delta'$, $n$, $d$ and  $\mathbf{z}$.
\end{lem}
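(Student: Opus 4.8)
The plan is to reduce to the homogeneous degree-$d$ case treated by Birch \cite{B} via a rescaling of the variables, and then apply Weyl differencing on the uneven box. First I would change variables by writing $v_j = w_j$ and absorbing the ratios $N/u_j$ into new parameters: set $P_j = N/u_j$ and work with the box $1 \le v_j \le P_j$. The key observation is that, after $d-1$ applications of the Weyl (differencing) inequality to $\widetilde T(\alpha)$, the logarithmic factors $(\log v_\ell)^\epsilon$ and the smooth weights $\omega(v_\ell u_\ell/N - z_\ell)$ are harmless: the logs are bounded by $(\log N)^{O(1)} \ll N^\varepsilon$ and can be pulled out (or handled by partial summation / a trivial bound since they are monotone and bounded on the relevant range), and the smooth weights, being bounded by $\mathfrak c$ with bounded derivatives, contribute only $O_\mathfrak{c}(1)$ after differencing and at worst shrink the ranges of summation. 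Thus after differencing we are left with a multilinear exponential sum in the successive differences whose phase is governed by the multilinear form associated to the leading part $f^{[d]}$, exactly as in Birch's treatment, but now over a product of intervals of unequal lengths $P_1, \ldots, P_m$.

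Next I would invoke the geometry-of-numbers counting argument of Birch, adapted to the uneven box as in \cite{SS}. One counts the number of integer points $(\mathbf h^{(1)}, \ldots, \mathbf h^{(d-1)})$ in the box for which the associated bilinear/multilinear forms in $\alpha$ are all close to integers; a Davenport-type lemma on counting lattice points in a convex body, together with the bound on the dimension of the singular locus $V^*_{f^{[d]}}$, gives that either this count is small --- yielding alternative (i) with the exponent $\widetilde K = \operatorname{codim} V^*_{f^{[d]}} / 2^{d-1}$, where the $2^{d-1}$ is the usual loss from iterating the differencing --- or $\alpha$ admits a rational approximation $a/q$ with $q$ and $|q\alpha - a|$ controlled by $P_{\max}$ and $P_{\min}$ as in alternative (ii). The ratio factor $(P_{\max}/P_{\min})^{\widetilde K}$ arises precisely because the box is uneven: the lattice-point count is naturally controlled in terms of the largest side $P_{\max}$, and translating this back to a bound savings off the trivial estimate $\prod_j P_j$ costs a factor depending on the aspect ratio.

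Since this is verbatim \cite[Lemma 2.5]{SS}, in the paper I would simply cite it; if a proof sketch is wanted, the three steps are: (1) Weyl differencing $d-1$ times, discarding logs and smooth weights at a cost of $N^\varepsilon$ and bounded constants; (2) the geometry-of-numbers dichotomy of Birch, done over the uneven box; (3) bookkeeping to express everything in terms of $P_{\max}, P_{\min}$ and to verify the implicit constant is uniform in $\vartheta$, $\mathbf u$ and the coefficients of $f$. The main obstacle --- and the reason \cite{SS} is needed rather than a direct appeal to \cite{B} --- is step (2) over an uneven box: one must track how the unequal side lengths propagate through the successive differencing steps and through the lattice-point count, and ensure the final exponent of $P_{\max}$ in the saving is $-\widetilde K \vartheta$ uniformly, with the aspect-ratio loss isolated into the clean factor $(P_{\max}/P_{\min})^{\widetilde K}$.
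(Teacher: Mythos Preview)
Your proposal is correct and matches the paper's approach: both reduce to \cite[Lemma 2.5]{SS} by setting $P_j = N/u_j$, observing that the only new feature is the logarithmic weight when $\epsilon = 1$, and absorbing this extra weight by partial summation at the same point where the smooth weight is already handled (namely in the proof of \cite[Lemma 2.2]{SS}), at a cost of $N^{\varepsilon}$. The paper is slightly more precise about \emph{where} in the argument of \cite{SS} the partial summation occurs, but the substance is identical.
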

\begin{proof}
This is a slight variant of \cite[Lemma 2.5]{SS} and it can be obtained by making a minor modification to the argument.
If $\epsilon = 0$, then the result follows immediately from \cite[Lemma 2.5]{SS} by setting
$P_j = N / u_j$ $(1 \leq j \leq m)$. If $\epsilon = 1$, then the only difference
between $\widetilde{T}({\alpha})$ and the exponential sum in consideration in \cite[Lemma 2.5]{SS} is the
presence of the logarithmic weight.

The set-up for the proof of \cite[Lemma 2.5]{SS} begins after the statement of \cite[Theorem 2.1]{SS} (In particular, \cite[(2.2)]{SS} is not assumed at this point.). We follow their proof from this point on. The only modification required is the proof of \cite[Lemma 2.2]{SS},
where we have to keep track of the logarithmic weight. In \cite[Lemma 2.2]{SS}, the smooth weight is dealt with by partial summation at the end of the proof.
Our new weight, which is the same smooth weight multiplied by the logarithmic weight, can be dealt with in a similar manner
(except we need an additional factor of $P^{\varepsilon}$ for any $\varepsilon > 0$).
After \cite[Lemma 2.2]{SS}, we can simply follow their argument until the end of the proof of \cite[Lemma 2.5]{SS}
to establish our result. We note that as mentioned in \cite[Lines 11-13, Page 11]{SS}, the implicit constants do not depend on the
coefficients of $f$.
\end{proof}



\section{Structural dichotomy of $F$}
Let $F \in \mathbb{Z}[x_1, \ldots, x_n]$ be a homogeneous form.
Given partitions of variables $\mathbf{x} = (\mathbf{z}, \mathbf{w})$ and
$\mathbf{z} = (\mathbf{u}, \mathbf{v})$,  let us denote
$F_{\mathbf{z}}(\mathbf{z}) = F(\mathbf{z}, \mathbf{0}) = F(\mathbf{x}) |_{\mathbf{w} = \mathbf{0}}$
and
\begin{equation}
\label{den G 1}
\mathfrak{G}(\mathbf{u}, \mathbf{v} ) = F_{\mathbf{z}}(\mathbf{u}, \mathbf{v}) - F_{\mathbf{z}}(\mathbf{u}, \mathbf{0}) - F_{\mathbf{z}}(\mathbf{0}, \mathbf{v}).
\end{equation}
It is clear that $\mathfrak{G}(\mathbf{u}, \mathbf{0} )$ and $\mathfrak{G}(\mathbf{0}, \mathbf{v} )$ are both identically $0$.
With these notation we consider two cases based on the structure of $F$.
\begin{defn}
\label{dichotomy}
We define a structural dichotomy of $F$ with respect to $\mathcal{C}_0 > 0$ as follows:
\begin{enumerate}[(I)]
\item There exist partitions of variables $\mathbf{x} = (\mathbf{z}, \mathbf{w})$ and
$\mathbf{z} = (\mathbf{u}, \mathbf{v})$ such that $\textnormal{codim} \tsp V^*_{\mathfrak{G}} > \mathcal{C}_0$.
\item Given any partitions $\mathbf{x} = (\mathbf{z}, \mathbf{w})$ and
$\mathbf{z} = (\mathbf{u}, \mathbf{v})$, we have $\textnormal{codim} \tsp V^*_{\mathfrak{G}} \leq \mathcal{C}_0$.
\end{enumerate}
\end{defn}
Let $\mathcal{C}_0$ be the least integer satisfying
\begin{equation}
\mathcal{C}_0 > \frac{d(d-1) 8}{ \vartheta_0}  \  2^d,
\end{equation}
where $\vartheta_0 > 0$ is a constant to be chosen later.
Suppose $F$ satisfies (I) and let us denote $\mathbf{w} = (w_1, \ldots, w_s)$, $\mathbf{u} = (u_1, \ldots, u_{m})$ and $\mathbf{v} = (v_1, \ldots, v_{n - m -s})$. We also let $\mathbf{u}' = (u'_1, \ldots, u'_{m})$ and $\mathbf{v}' = (v'_1, \ldots, v'_{n - m -s})$. Let
\begin{eqnarray}
\mathfrak{g}_{\mathbf{w}}(\mathbf{u}, \mathbf{u}', \mathbf{v}, \mathbf{v}' ) &=& F(\mathbf{u}, \mathbf{v}, \mathbf{w}) - F(\mathbf{u}, {\mathbf{v}'}, \mathbf{w}) - F( {\mathbf{u}'}, \mathbf{v} , \mathbf{w}) + F({\mathbf{u}'}, {\mathbf{v}'} , \mathbf{w}).
\notag
\end{eqnarray}
Then we have
$$
\mathfrak{g}^{[d]}_{\mathbf{w}}(\mathbf{u}, {\mathbf{u}'}, \mathbf{v}, {\mathbf{v}'} ) =
\mathfrak{G}(\mathbf{u}, \mathbf{v}) - \mathfrak{G}(\mathbf{u}, {\mathbf{v}'}) - \mathfrak{G}( {\mathbf{u}'}, \mathbf{v}) + \mathfrak{G}({\mathbf{u}'}, {\mathbf{v}'})
$$
for each fixed $\mathbf{w}$.
By applying the Cauchy-Schwarz inequality twice, we obtain
\begin{eqnarray}
\label{CS twice ineq}
\\
\notag
|S(\alpha)| 
\ll
(N \log N)^{s + \frac{1}{2} (n-s)}  \max_{\mathbf{w} \in [0,N]^s} \Big{|} \sum_{\mathbf{u}, {\mathbf{u}'} \in [0, N]^{m}} \ \sum_{\mathbf{v}, {\mathbf{v}'} \in [0, N]^{n - m -s}} e(\alpha \mathfrak{g}_{\mathbf{w}}(\mathbf{u}, {\mathbf{u}'}, \mathbf{v}, {\mathbf{v}'} ))\Big{|}^{\frac{1}{4}}.
\end{eqnarray}
Since $\mathfrak{g}^{[d]}_{\mathbf{w}}(\mathbf{u}, \mathbf{0}, \mathbf{v}, \mathbf{0} ) = \mathfrak{G}(\mathbf{u}, \mathbf{v})$, it follows from Lemma \ref{Lemma on the B rank} that
\begin{equation}
\label{ineq codim 1'}
\textnormal{codim} \tsp V_{ \mathfrak{g}^{[d]}_{\mathbf{w}} }^* \geq \textnormal{codim} \tsp V_{\mathfrak{G}}^* > \mathcal{C}_0
\end{equation}
for each fixed $\mathbf{w}$. Then by \cite[Lemma 4.3]{B} we obtain the following.
\begin{lem}
\label{S est 1}
Let $F \in \mathbb{Z}[x_1, \ldots, x_n]$ be a homogeneous form of degree $d \geq 2$ satisfying \textnormal{(I)}. Let $0 < \vartheta < 1$
and $\varepsilon > 0$.
Then at least one of the following alternatives holds:

\textnormal{i)} One has the upper bound
$$
|S(\alpha)| \ll N^{n - \vartheta 2^{- 1 - d} \mathcal{C}_0 + \varepsilon}.
$$

\textnormal{ii)} There exist $1 \leq q \leq N^{(d-1) \vartheta}$ and $a \in \mathbb{Z}$ with $\gcd (a, q) = 1$
such that
$$
| q \alpha - a | \leq N^{-d + (d-1)\vartheta}.
$$
\end{lem}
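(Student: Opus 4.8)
The plan is to combine the Cauchy--Schwarz reduction already in place with Birch's Weyl-type inequality, applied to the quartic difference polynomial $\mathfrak{g}_{\mathbf{w}}$. Starting from \eqref{CS twice ineq}, we have for every fixed $\mathbf{w} \in [0,N]^s$ an inner sum in the variables $\mathbf{u}, \mathbf{u}' \in [0,N]^m$ and $\mathbf{v}, \mathbf{v}' \in [0,N]^{n-m-s}$ whose phase is $\alpha \mathfrak{g}_{\mathbf{w}}$, a polynomial of degree $d$. The top-degree part $\mathfrak{g}^{[d]}_{\mathbf{w}}$ has, by \eqref{ineq codim 1'}, singular locus of codimension exceeding $\mathcal{C}_0$. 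The key point (this is exactly what \cite[Lemma 4.3]{B} provides, in the form of the Weyl differencing argument applied to a box of side $N$) is that for any $0 < \vartheta < 1$ either the exponential sum
$$
\Big| \sum_{\mathbf{u}, \mathbf{u}' \in [0,N]^m} \ \sum_{\mathbf{v}, \mathbf{v}' \in [0,N]^{n-m-s}} e\big(\alpha \mathfrak{g}_{\mathbf{w}}(\mathbf{u}, \mathbf{u}', \mathbf{v}, \mathbf{v}')\big) \Big|
\ll N^{2n - 2s - \widetilde{K}\vartheta + \varepsilon},
$$
with $\widetilde K = \mathrm{codim}\,V^*_{\mathfrak{g}^{[d]}_{\mathbf{w}}} \big/ 2^{d-1}$, or else $\alpha$ admits a rational approximation $a/q$ with $1 \le q \le N^{(d-1)\vartheta}$ and $|q\alpha - a| \le N^{-d+(d-1)\vartheta}$. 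The number of variables is $2n - 2s$, so the trivial bound is $N^{2n-2s}$, and the savings exponent is $\widetilde K \vartheta$.

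Next I would feed this dichotomy back through \eqref{CS twice ineq}. In the first alternative,
$$
|S(\alpha)| \ll (N\log N)^{s + \frac12(n-s)} \cdot \big(N^{2n-2s-\widetilde K\vartheta + \varepsilon}\big)^{1/4}
= N^{\,\frac{s}{2} + \frac{n}{2} - \frac{\widetilde K \vartheta}{4} + \varepsilon'}
\ll N^{\,n - \frac{\widetilde K \vartheta}{4} + \varepsilon'},
$$
using $s \le n$, after absorbing the logarithms into $N^{\varepsilon}$. Since \eqref{ineq codim 1'} gives $\mathrm{codim}\,V^*_{\mathfrak{g}^{[d]}_{\mathbf{w}}} > \mathcal{C}_0$ uniformly in $\mathbf{w}$, we have $\widetilde K > \mathcal{C}_0 / 2^{d-1}$, hence $\widetilde K / 4 > \mathcal{C}_0 \, 2^{-d-1}$, which yields precisely alternative i) of the lemma, $|S(\alpha)| \ll N^{n - \vartheta 2^{-1-d}\mathcal{C}_0 + \varepsilon}$. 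The second alternative of \cite[Lemma 4.3]{B} is already in the exact shape of alternative ii), so no further work is needed there. One slightly delicate point is that the major-arc alternative from Birch's lemma is attached to a particular $\mathbf{w}$ (the one realising the maximum in \eqref{CS twice ineq}); but since the conclusion ii) asserts the mere existence of such $a, q$, and the maximising $\mathbf{w}$ is some fixed choice, this causes no problem — for that $\mathbf{w}$ we are in one of the two cases, and in either case we get what we want.

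The main obstacle, such as it is, is bookkeeping rather than conceptual: one must check that Birch's Lemma 4.3 applies to the polynomial $\mathfrak{g}_{\mathbf{w}}$ with the stated parameters, in particular that the relevant codimension is that of the degree-$d$ part $\mathfrak{g}^{[d]}_{\mathbf{w}}$ and not of $\mathfrak{g}_{\mathbf{w}}$ itself (this is standard — Weyl differencing only sees the top-degree form), and that the savings $\widetilde K$ with the factor $2^{d-1}$ in the denominator matches the factor $2^{-1-d}\mathcal{C}_0$ after the fourth root from the double Cauchy--Schwarz, i.e.\ $2^{-(d-1)} \cdot \tfrac14 = 2^{-(d+1)}$. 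One should also note that the implied constants depend only on $n$, $d$ and $\varepsilon$ (the $\mathbf{w}$-dependence is uniform because the codimension bound \eqref{ineq codim 1'} is), which is what allows the maximum over $\mathbf{w}$ in \eqref{CS twice ineq} to be taken harmlessly.
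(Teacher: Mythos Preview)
Your approach is essentially identical to the paper's: invoke \cite[Lemma 4.3]{B} on the inner sum of \eqref{CS twice ineq} using the codimension bound \eqref{ineq codim 1'}, then feed back through the Cauchy--Schwarz factor. One minor arithmetic slip: the exponent $\bigl(s + \tfrac12(n-s)\bigr) + \tfrac14(2n-2s)$ equals $n$ exactly rather than $\tfrac{s}{2} + \tfrac{n}{2}$ (your intermediate display drops the $\tfrac14(2n-2s)$ contribution from the fourth root), so the appeal to $s \le n$ is unnecessary --- but the final bound is unaffected and correct.
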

\begin{proof}
If we are not in case ii), then it follows from \cite[Lemma 4.3]{B} and (\ref{ineq codim 1'}) that
$$
\Big{|} \sum_{\mathbf{u}, {\mathbf{u}'} \in [0, N]^{m}} \ \sum_{\mathbf{v}, {\mathbf{v}'} \in [0, N]^{n - m -s}} e(\alpha \mathfrak{g}_{\mathbf{w}}(\mathbf{u}, {\mathbf{u}'}, \mathbf{v}, {\mathbf{v}'} ))\Big{|}
\ll N^{2(n-s) - \vartheta 2^{1 - d} \mathcal{C}_0  + \varepsilon},
$$
where the implicit constant is independent of $\mathbf{w}$.
Therefore, by (\ref{CS twice ineq}) we obtain
$$
|S(\alpha)|  \ll  N^{s + \frac{1}{2}(n-s) + \varepsilon} N^{\frac{1}{2}(n-s) - \vartheta 2^{- 1 - d} \mathcal{C}_0 +  \varepsilon} = N^{n - \vartheta 2^{- 1 - d} \mathcal{C}_0 + 2 \varepsilon}.
$$
Since there is nothing to prove if we are in case ii), this completes the proof.
\end{proof}
This is all we need when $F$ satisfies (I). We now consider the case where $F$ satisfies (II).
Under this assumption, given any partition of the $\mathbf{x}$ variables it follows from Lemma \ref{lem on the split of B rank} that the rank must be concentrated in at least one of the subsets. We make this statement precise in the following lemma.
\begin{lem}
\label{lemma rank concn}
Let $F \in \mathbb{Z}[x_1, \ldots, x_n]$ be a homogeneous form of degree $d \geq 2$ satisfying \textnormal{(II)}. Suppose we have a partition of the $\mathbf{x}$ variables into $k$ sets $\mathbf{x} = (\mathbf{u}_1, \ldots, \mathbf{u}_{k})$. For each $1 \leq i \leq k$ let us denote
$$
F_{i} (\mathbf{u}_i) = F(\mathbf{x})|_{ \mathbf{u}_{\ell} = \mathbf{0} \tsp (\ell \not = i) }.
$$
Then there exists $1 \leq j \leq k$ satisfying
\begin{eqnarray}
\label{rank concn}
\textnormal{codim} \tsp V^*_{F_j} \geq    \frac{\textnormal{codim} \tsp V^*_{F}- (k-1) \mathcal{C}_0 }{k}.
\end{eqnarray}
\end{lem}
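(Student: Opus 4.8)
The plan is to argue by induction on the number $k$ of blocks in the partition. The base case $k=1$ is trivial, since there $F_1=F$ and the right-hand side of (\ref{rank concn}) equals $\textnormal{codim}\, V^*_F$, so $j=1$ works with equality. Throughout the induction we may assume every block $\mathbf{u}_i$ is nonempty: deleting an empty block decreases $k$ while, as an elementary computation shows (it reduces to $R \geq -\mathcal{C}_0$), only enlarging the right-hand side of (\ref{rank concn}), so the bound for the smaller partition is stronger.

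For the inductive step, fix a partition $\mathbf{x}=(\mathbf{u}_1,\ldots,\mathbf{u}_k)$ with $k\geq 2$ and set $\mathbf{v}=(\mathbf{u}_2,\ldots,\mathbf{u}_k)$, so that $\mathbf{x}=(\mathbf{u}_1,\mathbf{v})$. Applying Lemma \ref{lem on the split of B rank} to this two-block split (with $\mathbf{u}=\mathbf{u}_1$), and noting $F_{\mathbf{u}_1}=F_1$ and $F_{\mathbf{v}}=F|_{\mathbf{u}_1=\mathbf{0}}$, I get
$$
\textnormal{codim}\, V^*_F \;\leq\; \textnormal{codim}\, V^*_{F_1} + \textnormal{codim}\, V^*_{G_{\mathbf{u}_1,\mathbf{v}}} + \textnormal{codim}\, V^*_{F|_{\mathbf{u}_1=\mathbf{0}}}.
$$
Taking $\mathbf{w}=\emptyset$, $\mathbf{z}=\mathbf{x}$, $\mathbf{u}=\mathbf{u}_1$ in Definition \ref{dichotomy}, the form $G_{\mathbf{u}_1,\mathbf{v}}$ is precisely the associated $\mathfrak{G}$, so hypothesis (II) yields $\textnormal{codim}\, V^*_{G_{\mathbf{u}_1,\mathbf{v}}}\leq\mathcal{C}_0$. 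Writing $R=\textnormal{codim}\, V^*_F$ and $R'=\textnormal{codim}\, V^*_{F|_{\mathbf{u}_1=\mathbf{0}}}$, this gives $R' \geq R-\mathcal{C}_0-\textnormal{codim}\, V^*_{F_1}$.

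The one step that is a genuine observation rather than bookkeeping is that $F|_{\mathbf{u}_1=\mathbf{0}}$, viewed as a form in the variables $(\mathbf{u}_2,\ldots,\mathbf{u}_k)$, again satisfies (II) with the same $\mathcal{C}_0$. Indeed, any $\mathfrak{G}$ produced from $F|_{\mathbf{u}_1=\mathbf{0}}$ by a decomposition $(\mathbf{u}_2,\ldots,\mathbf{u}_k)=(\mathbf{z}',\mathbf{w}')$, $\mathbf{z}'=(\mathbf{u}',\mathbf{v}')$, is literally equal to the $\mathfrak{G}$ produced from $F$ via $\mathbf{w}=(\mathbf{u}_1,\mathbf{w}')$, $\mathbf{z}=(\mathbf{u}',\mathbf{v}')$: both equal $H(\mathbf{u}',\mathbf{v}')-H(\mathbf{u}',\mathbf{0})-H(\mathbf{0},\mathbf{v}')$ with $H=F|_{(\mathbf{u}_1,\mathbf{w}')=\mathbf{0}}$. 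Hence the family of relevant $\mathfrak{G}$'s for $F|_{\mathbf{u}_1=\mathbf{0}}$ is contained in that for $F$, and (II) is inherited. I then apply the inductive hypothesis to $F|_{\mathbf{u}_1=\mathbf{0}}$ with its $(k-1)$-block partition $(\mathbf{u}_2,\ldots,\mathbf{u}_k)$, noting $(F|_{\mathbf{u}_1=\mathbf{0}})_j=F_j$ for $2\leq j\leq k$; this produces some $j\in\{2,\ldots,k\}$ with $\textnormal{codim}\, V^*_{F_j}\geq (R'-(k-2)\mathcal{C}_0)/(k-1)$.

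Finally I combine the two bounds by an elementary case split. If $\textnormal{codim}\, V^*_{F_1}\geq (R-(k-1)\mathcal{C}_0)/k$, then $j=1$ already satisfies (\ref{rank concn}) (and if that bound is negative there is nothing to prove, since codimensions are nonnegative). Otherwise $R' > R-\mathcal{C}_0-(R-(k-1)\mathcal{C}_0)/k = ((k-1)R-\mathcal{C}_0)/k$, and hence
$$
\textnormal{codim}\, V^*_{F_j} \;\geq\; \frac{R'-(k-2)\mathcal{C}_0}{k-1} \;>\; \frac{(k-1)R-\mathcal{C}_0-k(k-2)\mathcal{C}_0}{k(k-1)} \;=\; \frac{R-(k-1)\mathcal{C}_0}{k},
$$
where the last equality uses $1+k(k-2)=(k-1)^2$. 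This closes the induction. I expect no real obstacle here; the only non-mechanical point is the stability of property (II) under the specialization $\mathbf{u}_1=\mathbf{0}$, handled in the third paragraph, and everything else is Lemma \ref{lem on the split of B rank} together with arithmetic.
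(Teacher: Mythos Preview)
Your argument is correct. Both your proof and the paper's use the same ingredients (Lemma~\ref{lem on the split of B rank}, hypothesis~(II), and induction on~$k$), but they are organized differently. The paper first establishes by induction the stronger additive inequality
\[
\textnormal{codim}\, V^*_{F} \;\leq\; \sum_{i=1}^k \textnormal{codim}\, V^*_{F_i} + (k-1)\,\mathcal{C}_0,
\]
from which the lemma follows immediately by the pigeonhole principle. In the inductive step the paper merges two blocks and applies~(II) directly to~$F$ (with the remaining blocks playing the role of~$\mathbf{w}$), so it never needs to check that~(II) is inherited by a restriction. Your route instead inducts on the lemma itself, splits off one block, and requires the observation that $F|_{\mathbf{u}_1=\mathbf{0}}$ again satisfies~(II); this is easy (and your justification is exactly right, since every~$\mathfrak{G}$ for the restriction is already a~$\mathfrak{G}$ for~$F$ with $\mathbf{u}_1$ absorbed into~$\mathbf{w}$), but it does make the argument slightly longer, and the final case split is extra arithmetic that the pigeonhole avoids. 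A minor point you might make explicit: if $F|_{\mathbf{u}_1=\mathbf{0}}$ is identically zero (so not of degree~$d$), then $R'=0$ forces $\textnormal{codim}\, V^*_{F_1}\geq R-\mathcal{C}_0$, and one checks directly that~(\ref{rank concn}) holds with $j=1$ (or is vacuous if the right-hand side is negative).
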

\begin{proof}
We prove the following claim by induction.

Claim: Given any $k \geq 2$ and partition $\mathbf{x} = (\mathbf{u}_1, \ldots, \mathbf{u}_{k})$, we have
$$
\textnormal{codim} \tsp V^*_{F} \leq \sum_{i=1}^k \textnormal{codim} \tsp V^*_{F_i} + (k-1) \mathcal{C}_0.
$$
\newline
It is clear that our result is an immediate consequence of this claim.
The base case $k = 2$ follows from our hypothesis and Lemma \ref{lem on the split of B rank}.
Suppose the statement holds for $k = k_0 - 1$.
We turn the partition $\mathbf{x} = (\mathbf{u}_1, \ldots, \mathbf{u}_{k_0})$ into a partition of $k_0 - 1$
sets by setting $\mathbf{z} = (\mathbf{u}_{k_0 - 1}, \mathbf{u}_{k_0})$.
Let us denote $\mathbf{w} = (\mathbf{u}_1, \ldots, \mathbf{u}_{k_0 -2})$ and
$F_{\mathbf{z}} (\mathbf{z}) = F(\mathbf{x}) |_{\mathbf{w} = \mathbf{0}}$.
Then by the inductive hypothesis we have
$$
\textnormal{codim} \tsp V^*_{F} \leq \sum_{i=1}^{k_0 - 2} \textnormal{codim} \tsp V^*_{F_i} + (k_0-2) \mathcal{C}_0
+
\textnormal{codim} \tsp V^*_{ F_{\mathbf{z}} }.
$$
By Lemma \ref{lem on the split of B rank} it follows that
$$
\textnormal{codim} \tsp V^*_{ F_{\mathbf{z}} } \leq \textnormal{codim} \tsp V^*_{F_{k_0 - 1}} + \textnormal{codim} \tsp V^*_{F_{k_0}}
+ \textnormal{codim} \tsp V^*_{ \mathfrak{G}_0 },
$$
where
$$
\mathfrak{G}_0(\mathbf{u}_{k_0 - 1}, \mathbf{u}_{k_0})
= F_{\mathbf{z}}(\mathbf{u}_{k_0 - 1}, \mathbf{u}_{k_0}) - F_{\mathbf{z}}(\mathbf{u}_{k_0 - 1}, \mathbf{0}) - F_{\mathbf{z}}(\mathbf{0}, \mathbf{u}_{k_0}).
$$
Since $\textnormal{codim} \tsp V^*_{ \mathfrak{G}_0 } \leq \mathcal{C}_0$, which follows from our hypothesis,
we obtain that the statement holds for $k = k_0$. This completes the proof of our claim.
\end{proof}

\section{Exponential sum estimate}
\label{sec exp sum}
We prove the following proposition to establish our minor arcs estimate in Section \ref{sec6.2}.
\begin{prop}
\label{main prop}
Let $F \in \mathbb{Z}[x_1, \ldots, x_n]$ be a homogeneous form of degree $d \geq 2$ satisfying \textnormal{(II)}.
Let $\varepsilon > 0$, $H$ be any positive even integer and  $\delta_1, \vartheta_0 > 0$ be such that
\begin{eqnarray}
\label{someconditions}
\max \left(  2 \delta_1 d +  (d-1) \frac{ 2 \delta_1 }{1 - 2 \delta_1},    \frac{1 - 2 \delta_1}{H} \right) < \vartheta_0 < 1.
\end{eqnarray}
Then for $N$ sufficiently large, at least one of the following alternatives holds:

\textnormal{i)} One has the upper bound
\begin{eqnarray}
\notag
|S({\alpha})|
&\ll&
N^{n + \varepsilon} \Big{(}
N^{- \frac{2}{3} \left( \frac{\vartheta_0 - d \delta_1  }{d-1} ( 1 -  \delta_1) -  \delta_1 \right) 2^{-d}(\textnormal{codim} \tsp V_{F}^* - 2\mathcal{C}_0)  }
\\
&+& N^{- \frac{2}{3} \left( \frac{\vartheta_0 - 2 \delta_1 d }{d-1} ( 1 -  2\delta_1) -  2\delta_1 \right) 2^{-d}(\textnormal{codim} \tsp V_{F}^* - {2} \mathcal{C}_0)  }
\notag
\\
&+&
N^{-\left( \frac{2 \vartheta_0 \delta_1}{2d-1} - \left( 1 + \frac{ 2 \delta_1 }{2d-1}  \right) \frac{1 - 2 \delta_1}{H} \right) \frac{ 2/3 }{2^{2d}H} \left(   \textnormal{codim} \tsp V_F^*  -
(3H - 1) \mathcal{C}_0 \right)
}
\Big{)}.
\notag
\end{eqnarray}

\textnormal{ii)}
There exist $1 \leq q \leq N^{\vartheta_0}$ and $a \in \mathbb{Z}$ with
$\gcd( a, q) = 1$ such that
\begin{eqnarray}
| q \alpha - a | \leq N^{\vartheta_0 - d} .
\notag
\end{eqnarray}
\end{prop}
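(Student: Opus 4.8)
The plan is to extract a Type I/Type II decomposition of the $\Lambda$-weighted sum via Vaughan's identity, and then feed each resulting exponential sum into one of the two auxiliary estimates recorded in Section~3, namely Lemma~\ref{Lemma 2.5 in SS} (for the Type I pieces, via Schindler--Sofos) and Proposition~\ref{semiprimeprop} together with the argument of \cite{Y1} (for the Type II pieces). The three terms in alternative i) correspond exactly to three regimes: the first two are the ``$\mathbf{u}$-smooth'' and ``$\mathbf{v}$-smooth'' Type I situations obtained after one or two applications of Vaughan's identity and Cauchy--Schwarz, and the third, with its $1/(2^{2d}H)$ factor and the $(3H-1)\mathcal{C}_0$ subtraction, is the Type II situation where one raises to the $H$-th power and then applies Cauchy--Schwarz twice in the spirit of the bihomogeneous set-up.

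First I would apply Vaughan's identity to each $\Lambda(x_j)$ in $S(\alpha)$ with parameters of size $N^{\delta_1}$, expanding $S(\alpha)$ into a bounded (in terms of $n$) number of sums, each of which is a product over the $n$ coordinates of either a Type I sum (a divisor-type coefficient supported on $[1,N^{\delta_1}]$, or $[1,N^{2\delta_1}]$ after combining two ranges, convolved against a smooth-weighted linear sum) or a Type II sum (a bilinear sum with both variables ranging over dyadic blocks inside $[N^{\delta_1}, N^{1-\delta_1}]$). By a pigeonhole step it suffices to bound a single such product sum; and after possibly re-grouping coordinates one may assume the coordinates split into a ``Type I block'' and a ``Type II block''. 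Now invoke Lemma~\ref{lemma rank concn} with the partition of $\mathbf{x}$ into these blocks (and, inside each block, into the pieces needed to set up the relevant Cauchy--Schwarz): since $F$ satisfies (II), one of the restricted forms $F|_{\text{other variables}=\mathbf{0}}$ retains codimension at least roughly $\operatorname{codim} V_F^*/k - \mathcal{C}_0$ for the appropriate small $k$ (here $k=2$ or $k=3$ for the first two terms, $k=3H$ for the third), which is what produces the $-2\mathcal{C}_0$ and $-(3H-1)\mathcal{C}_0$ corrections. On that block, the form still has large rank, so the circle-method input applies with a genuine power saving.

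Next, for the Type I block I would place the variable $\mathbf{v}$ over which we have the smooth linear weight, write the sum in the form $\widetilde{T}(\alpha)$ of Lemma~\ref{Lemma 2.5 in SS} (the coordinates carrying the short Type I coefficients play the role of the $u_j$, living in $[1,N^{\delta_1}]$ or $[1,N^{2\delta_1}]$, so $\delta' = \delta_1$ or $2\delta_1$; the smooth-weighted coordinates play the role of the $v_j$), and apply the dichotomy of that lemma: either we get the stated bound with $\widetilde K = \operatorname{codim} V^*/2^{d-1}$ and exponent $\vartheta$ chosen to balance $P_{\max}/P_{\min} = N^{O(\delta_1)}$ against $P_{\max}^{-\widetilde K\vartheta}$ — this is where $\frac{\vartheta_0 - d\delta_1}{d-1}(1-\delta_1) - \delta_1$ (resp. with $2\delta_1$) and the $2^{-d}$, together with the extra $\tfrac{2}{3}$ from an outer Hölder/Cauchy--Schwarz over the $\mathbf{w}$ variables, appear in the exponent — or else we fall into case ii) of that lemma, which, after checking $q \le N^{\vartheta_0}$ and $|q\alpha - a|\le N^{\vartheta_0-d}$ using $(\ref{someconditions})$, puts us in alternative ii) of the proposition. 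For the Type II block I would follow \cite{Y1}: raise the bilinear sum to the $H$-th power to linearise one factor, getting a sum of $F$ evaluated at products $u_\ell v_\ell$ over a product of hyperbolic regions; apply Cauchy--Schwarz twice, once in each of the two sets of multiplicative variables, bringing in the bihomogeneous form $\mathfrak G(\mathbf u;\mathbf v) = \mathfrak F(u_1v_1,\dots)$ whose singular-locus codimension is controlled from below by $\operatorname{codim} V_{\mathfrak F}^*/2$ via Proposition~\ref{semiprimeprop}; then apply the Weyl-type estimate of Birch (\cite[Lemma~4.3]{B}) to the resulting exponential sum in the now-unconstrained variables. Balancing the hyperbolic-range lengths (ranges of size $N^{\delta_1}$ up to $N^{1-\delta_1}$, dissected into $O(\log N)$ dyadic pieces, each contributing the $\frac{1-2\delta_1}{H}$ terms) against the power saving $N^{-(d-1)\vartheta\,\widetilde K}$ — where now $\widetilde K$ carries the $2^{-2d}$ and the extra $1/H$ — yields the third term, again with the proviso that if we instead land in the major-arc alternative of Birch's lemma we verify it implies ii) of the proposition using $(\ref{someconditions})$.

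The main obstacle I expect is the bookkeeping of the dissection in the Type II case: after $H$-th powering and the double Cauchy--Schwarz, one must carefully track how the uneven box lengths $N/u_j$ interact with the rank loss from Lemma~\ref{lemma rank concn} (with $k = 3H$) and with the Birch minor-arc exponent, and show that with $\vartheta_0 \gg 1$ and $\delta_1$ small but with $H$ not too large one can simultaneously (a) keep the power saving positive, (b) ensure the major-arc alternative of Birch's lemma is genuinely contained in $\mathfrak M(\vartheta_0)$, and (c) prevent the $(3H-1)\mathcal{C}_0$ term from swamping $\operatorname{codim} V_F^*$ — this last point is ultimately what forces the size of the constant in $(\ref{codim of F})$. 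The Type I cases are more routine, essentially a direct citation of Lemma~\ref{Lemma 2.5 in SS} after arranging the variables, but even there one must be careful that after Vaughan's identity the ``long'' variable genuinely carries a smooth weight of the form required by $\mathcal{S}^+(\delta;1;\mathfrak c)$, which may require an additional dyadic splitting and partial summation.
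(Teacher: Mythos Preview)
Your skeleton is right for the Type~I pieces (Cases~1 and~2 in the paper): Vaughan's identity, then the three-fold partition $\mathbf{x}=(\mathbf{u}_1,\mathbf{u}_2,\mathbf{u}_3)$ according to which Vaughan component each coordinate carries, then Lemma~\ref{lemma rank concn} with $k=3$ to find the block where the rank concentrates, and finally Lemma~\ref{Lemma 2.5 in SS} on that block. One correction: the factor $\tfrac{2}{3}$ in the first two exponents does \emph{not} come from an outer Cauchy--Schwarz; it is purely the $1/3$ in $\mathcal{C}_1=(\textnormal{codim}\,V_F^*-2\mathcal{C}_0)/3$ from rank concentration with $k=3$, combined with the $2^{1-d}$ from Lemma~\ref{Lemma 2.5 in SS} rewritten as $2\cdot 2^{-d}$.

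The genuine gap is in the Type~II case (rank concentrated in $\mathbf{u}_3$). The parameter $H$ is \emph{not} an $H$-th power you raise the bilinear sum to. In the paper, after isolating the Type~II block, one splits the range $N^{\delta_1}<a_i<N^{1-\delta_1}$ of the ``$\Lambda$-variable'' into $H$ subintervals of logarithmic width $\delta_2=(1-2\delta_1)/H$, and then applies Lemma~\ref{lemma rank concn} a \emph{second} time, now with $k=H$, to the partition of the $\mathbf{u}_3$-variables according to which subinterval their $a_i$ lies in. This is where the nested loss $(3H-1)\mathcal{C}_0$ comes from (first $k=3$, then $k=H$), and also why one needs $\delta_2<\vartheta_0$ in \eqref{someconditions}. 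Your description ``$k=3H$'' gives the same numerology by accident, but the mechanism is different, and the $H$-th-power route you sketch does not produce the required structure.

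After this second rank-concentration step, one is left with an exponential sum over $(u_j,v_j)$ with $u_j\in(N^\lambda,N^{\lambda+\delta_2}]$ and $v_j\in(V,N/u_j]$, a product of hyperbolic-type regions with box sides now of comparable length. This sum is \emph{not} handled by a direct citation of \cite[Lemma~4.3]{B} or of \cite{Y1}; the paper devotes all of Section~\ref{WD1} to a bihomogeneous Weyl differencing argument adapted to these uneven boxes (H\"older to the $2^{d-1}$-th power in $\mathbf{x}$, then again in $\mathbf{y}$, producing the $2^{2d-2}$), followed by the geometry-of-numbers Lemma~\ref{shrink lemma} to shrink the lattice-point count and extract either the bound in i) or the rational approximation in ii). Proposition~\ref{semiprimeprop} enters only to guarantee that the bihomogeneous form $g^{[2d]}(\mathbf{x};\mathbf{y})=F_{3,j_0}(x_1y_1,\dots,x_my_m)$ arising from this process has $\textnormal{codim}\,V^*_{g^{[2d]},1}\ge \tfrac{1}{2}\cdot\tfrac{\mathcal{C}_1-(H-1)\mathcal{C}_0}{H}$; it is not itself the source of the minor-arc saving. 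Without this Section~\ref{WD1} argument, your Type~II case is not closed.
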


One of the main ingredients to achieve Proposition \ref{main prop} is the following identity for $\Lambda$ known as Vaughan's identity \cite[\S 24]{D}.
\begin{lem}[Vaughan's identity]
\label{V's lem}
Let $U, V \geq 1$. Given any $x \in \mathbb{N}$ we have
\begin{eqnarray}
\notag
\Lambda(x)
=\Lambda(x)  \mathbbm{1}_{[1,U]}(x) + \sum_{st=x \atop{s\leq V}}\mu(s)\log t + \sum_{st=x \atop s\leq UV}
\nu_2(s)
+ \sum_{st=x\atop s>U,t>V}\Lambda(s)   \nu_3(t),
\end{eqnarray}
where
$$
\nu_2(s) = - \sum_{cd=s \atop c\leq V, d\leq U}\mu(c)  \Lambda(d) \ \ \text{  and  } \ \
\nu_3(t) =  - \sum_{c\mid t\atop c\leq V}\mu(c).
$$
\end{lem}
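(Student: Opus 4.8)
The identity is a formal one, and I would prove it by manipulating arithmetic functions under Dirichlet convolution $*$ and then evaluating at $x$. Write $\mathbf{1}$ for the constant function $n \mapsto 1$, $L$ for $n \mapsto \log n$, and $e$ for the convolution identity ($e(1)=1$, $e(n)=0$ for $n>1$), and set $\mu_V = \mu \cdot \mathbbm{1}_{[1,V]}$ and $\Lambda_U = \Lambda \cdot \mathbbm{1}_{[1,U]}$. I would recall the two standard relations $\mu * \mathbf{1} = e$ and $\Lambda * \mathbf{1} = L$ (the latter being $\sum_{d\mid n}\Lambda(d)=\log n$).

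The heart of the matter is the operator identity
$$\Lambda = \Lambda_U + L * \mu_V - \mathbf{1} * \mu_V * \Lambda_U + (\Lambda - \Lambda_U) * (e - \mathbf{1} * \mu_V),$$
which I would verify by a one-line expansion: using $\Lambda * \mathbf{1} = L$ and commutativity, the last term equals $(\Lambda - \Lambda_U) - L*\mu_V + \mathbf{1}*\mu_V*\Lambda_U$, so the right-hand side collapses to $\Lambda_U + (\Lambda - \Lambda_U) = \Lambda$ after the two copies of $\mathbf{1} * \mu_V * \Lambda_U$ cancel. This is precisely the $-\zeta'/\zeta = P - \zeta M P + (-\zeta'/\zeta - P)(1-\zeta M) + \cdots$ manipulation of Davenport, \cite[\S 24]{D}, written without mention of Dirichlet series.

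It then remains to evaluate the four terms at $x \in \mathbb{N}$ and recognize each. The term $\Lambda_U(x)$ is $\Lambda(x)\mathbbm{1}_{[1,U]}(x)$; the term $(L*\mu_V)(x) = \sum_{st = x}\mu_V(s)L(t)$ is $\sum_{st=x,\, s\le V}\mu(s)\log t$; and $-(\mathbf{1} * \mu_V * \Lambda_U)(x) = \sum_{st=x}\bigl(-\sum_{cd=s,\, c\le V,\, d\le U}\mu(c)\Lambda(d)\bigr) = \sum_{st=x}\nu_2(s)$, where the constraint $s \le UV$ in the statement is automatic since $\nu_2(s)=0$ unless $s = cd$ with $c\le V$, $d\le U$. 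For the last term I would note that $(\Lambda - \Lambda_U)(m) = \Lambda(m)\mathbbm{1}_{m > U}$, that $(e - \mathbf{1}*\mu_V)(1) = 1-1 = 0$, and that for $t>1$ one has $(e - \mathbf{1}*\mu_V)(t) = -\sum_{c\mid t,\, c\le V}\mu(c) = \nu_3(t)$; moreover $\nu_3(t) = -\sum_{c\mid t}\mu(c) = 0$ whenever $1 < t \le V$, since then every divisor of $t$ is $\le V$ and $\sum_{c\mid t}\mu(c)=0$ for $t>1$. Hence $e - \mathbf{1}*\mu_V$ is supported on $\{t : t > V\}$, where it agrees with $\nu_3$, so $\bigl((\Lambda-\Lambda_U)*(e-\mathbf{1}*\mu_V)\bigr)(x) = \sum_{st=x,\, s>U,\, t>V}\Lambda(s)\nu_3(t)$. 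Combining the four evaluations with the operator identity gives Vaughan's identity.

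There is no genuine obstacle here: the argument is a formal verification. The only place calling for (minor) care is the bookkeeping of truncations — checking that the cut-offs $s\le V$, $d\le U$, $c\le V$ built into $\mu_V$, $\Lambda_U$, $\nu_2$, $\nu_3$ produce exactly the ranges $s\le V$, $s\le UV$, and $s>U,\ t>V$ in the four sums, together with the vanishing $\nu_2(s)=0$ for $s>UV$ and $\nu_3(t)=0$ for $t\le V$ used above.
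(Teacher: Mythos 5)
Your proof is correct. The paper does not prove this lemma at all --- it simply cites Davenport \cite[\S 24]{D} --- and your argument is exactly the standard derivation given there, transcribed from the Dirichlet-series identity into the language of Dirichlet convolutions: the operator identity expands correctly, and your bookkeeping of the truncations (in particular that $\nu_2(s)=0$ for $s>UV$ and that $e-\mathbf{1}*\mu_V$ vanishes on $1< t\le V$ and at $t=1$, so it agrees with $\nu_3$ precisely on $t>V$) is accurate.
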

\begin{rem}
Given any $s, t \in \mathbb{N}$  we have
\begin{equation}
\label{triv bound on Chi}
|\nu_2(s)| \leq \sum_{d | s} \Lambda(d) = \log s    \ \ \text{  and  } \ \  |\nu_3(t)| \leq \sum_{c|t} 1 = \sigma_0(t).
\end{equation}
\end{rem}

We set
\begin{equation}
\label{defUV}
U = V = N^{\delta_1}.
\end{equation}
Recall the definition of $\varpi$ given in (\ref{defn of varpi}). In particular, for each  $1 \leq j \leq n$ we have
$$
\Lambda(x_j)  \mathbbm{1}_{[1,U]}(x_j) \varpi(\mathbf{x}) = 0  \  \ \,  (\mathbf{x} \in [0,N]^n).
$$
By Vaughan's identity we obtain
\begin{eqnarray}
S(\alpha) = \sum_{ \substack{ I_1, I_2, I_3 \subseteq \{1, \ldots, n \} \\ I_1 \cup I_2 \cup I_3 = \{1, \ldots, n\} \\ I_i \cap I_j = \emptyset \, (i \neq j)  } } \widetilde{S}_{I_1, I_2, I_3} (\alpha),
\label{3^nsum}
\end{eqnarray}
where
\begin{eqnarray}
\notag
\widetilde{S}_{I_1, I_2, I_3}(\alpha) &=&
\sum_{ \substack{ 1 \leq  s_{j_1} t_{j_1} \leq N \\ 1 \leq s_{j_1} \leq  V   \\ (j_1 \in I_1)  }} \
\sum_{ \substack{1 \leq s_{j_2} t_{j_2} \leq N \\ 1 \leq s_{j_2} \leq  U V  \\ (j_2 \in I_2) }}
\ \sum_{ \substack{1 \leq s_{j_3} t_{j_3} \leq N \\  s_{j_3} >  U, t_{j_3} > V  \\ (j_3 \in I_3) }} \
\prod_{j_1 \in I_1} \mu(s_{j_1}) (\log t_{j_1}) \varpi_{j_1}(s_{j_1} t_{j_1}) \cdot
\\
&&
\notag
\prod_{j_2 \in I_2} \nu_2(s_{j_2}) \varpi_{j_2}(s_{j_2} t_{j_2}) \cdot
\prod_{j_3 \in I_3} \Lambda(s_{j_3})  \nu_3(t_{j_3}) \varpi_{j_3}(s_{j_3} t_{j_3})
\cdot
\\
&&
\notag
e ( \alpha F(s_1 t_1, \ldots, s_n t_n) ).
\end{eqnarray}
Note in (\ref{3^nsum}), we allow the possibility of $I_j$ being the empty set.

We prove that given any partition $I_1, I_2$ and $I_3$ the sum $\widetilde{S}(\alpha) = \widetilde{S}_{I_1, I_2, I_3}(\alpha)$ satisfies
the statement of Proposition \ref{main prop} in place of $S(\alpha)$. 
Then Proposition \ref{main prop} follows by noting that there is nothing to prove if ii) is satisfied, and
by considering the partition that attains the largest value of $|\widetilde{S}(\alpha)|$ if ii) is not satisfied.
Consider the partition $\mathbf{x} = (\mathbf{u}_1, \mathbf{u}_2, \mathbf{u}_3)$
where $\mathbf{u}_j$ consists of all $x_i$'s with $i \in I_j$ $(1 \leq j \leq 3)$.
Then the rank is concentrated in at least one of $\mathbf{u}_1, \mathbf{u}_2$ and $\mathbf{u}_3$, i.e. there exists $1 \leq j \leq 3$ for which (\ref{rank concn}) is satisfied with $k=3$. We treat the three cases separately. To simplify the notation we let
\begin{equation}
\label{defnC_1}
\mathcal{C}_1 = \frac{ \textnormal{codim} \tsp V_F^* - 2 \mathcal{C}_0 }{3}.
\end{equation}

\textbf{Case 1}: The rank is concentrated in $\mathbf{u}_1$. Without loss of generality let $I_1 = \{ 1, \ldots,  m \}$.
Let us denote
$$
\mathfrak{d}_{\mathbf{s}, \mathbf{t}}(\mathbf{u}, \mathbf{v} ) = F(u_1 v_1, \ldots, u_m v_m, s_{m+1} t_{m+1}, \ldots, s_n t_n)
$$
and
\begin{eqnarray}
\label{def E case 2}
E_{\mathbf{s}, \mathbf{t}}(\alpha) = \sum_{ \substack{1 \leq u_{j} v_{j} \leq N \\  1 \leq u_{j} \leq V \\ (1 \leq j \leq m) }}  \mu(\mathbf{u}) \log (\mathbf{v})
 \prod_{\ell = 1 }^m \varpi_{\ell}(u_{\ell} v_{\ell})
\cdot e(\alpha  \mathfrak{d}_{\mathbf{s}, \mathbf{t}}(\mathbf{u}, \mathbf{v} ) ),
\end{eqnarray}
where $\mu(\mathbf{u}) = \mu(u_1) \cdots \mu(u_m)$ and $\log(\mathbf{v}) = \log(v_1) \cdots \log(v_m)$. Then we have
\begin{eqnarray}
\label{reduction case 2-1}
|\widetilde{S}(\alpha)|
&\leq& \sum_{ \substack{ 1 \leq s_{j_2} t_{j_2} \leq N \\ 1 \leq s_{j_2} \leq  U V \\ (j_2 \in I_2)  }}
\ \sum_{ \substack{ 1 \leq s_{j_3} t_{j_3} \leq N \\ s_{j_3} > U,  t_{j_3} > V \\ (j_3 \in I_3) }}
\ \prod_{j_2 \in I_2} (\log s_{j_2})  \varpi_{j_2}(s_{j_2} t_{j_2})
\cdot
\\
\notag
&&
\prod_{j_3 \in I_3} (\log N) \sigma_0 (t_{j_3}) \varpi_{j_3}(s_{j_3} t_{j_3})
\cdot
\left| E_{\mathbf{s}, \mathbf{t}}(\alpha) \right|
\\
\notag
&\ll&
N^{n - m + \varepsilon}
\max_{\mathbf{s}, \mathbf{t}}
\left| E_{\mathbf{s}, \mathbf{t}}(\alpha) \right|,
\end{eqnarray}
where the maximum is over the same range of $\mathbf{s}$ and $\mathbf{t}$ as in the above summation.
Let us fix $\mathbf{s}$ and $\mathbf{t}$, and we obtain an estimate for $E_{\mathbf{s}, \mathbf{t}}(\alpha)$ which is independent of the choice of
$\mathbf{s}$ and $\mathbf{t}$.
Let $\mathbf{V} = (V_1, \ldots, V_m)$, where each $V_j$ is a number greater than or equal to $\frac{1}{2}$ of the form $V/2^{k'}$ with $k' \in \mathbb{N}$. Then we have
\begin{eqnarray}
\label{def E case 2}
|E_{\mathbf{s}, \mathbf{t}}(\alpha)| \ll (\log N)^m  \ \max_{\mathbf{V}} \  (V_1 \cdots V_m) \ \max_{ \substack{ V_j < u_{j} \leq 2 V_j \\ (1 \leq j \leq m) }}
\left| \widetilde{E}_{\mathbf{s}, \mathbf{t}; \mathbf{u} }(\alpha) \right|,
\end{eqnarray}
where
$$
\widetilde{E}_{\mathbf{s}, \mathbf{t}; \mathbf{u} }(\alpha) = \sum_{ \substack{1 \leq v_{j} \leq N /u_j \\  (1 \leq j \leq m)  }} \log (\mathbf{v})
\prod_{\ell = 1}^{m}  \varpi_{\ell}(u_{\ell} v_{\ell})
 \cdot e(\alpha  \mathfrak{d}_{\mathbf{s}, \mathbf{t}}(\mathbf{u}, \mathbf{v} ) ).
$$
For a fixed $\mathbf{u} \in [1, V]^{m}$, it is clear that $\mathfrak{d}_{\mathbf{s}, \mathbf{t}}(\mathbf{u}, \mathbf{v})$ is a degree $d$ polynomial in $\mathbf{v}$ and its degree $d$ homogeneous portion $\mathcal{G}(v_1, \ldots, v_m)$ satisfies
$$
\mathcal{G}(v_1, \ldots, v_m) = F(u_1 v_1, \ldots, u_m v_m, 0, \ldots, 0)
$$
and $\| \mathcal{G} \| \leq V^d \|F \|$. Furthermore, $F_1(x_1, \ldots, x_m) =  F(x_1, \ldots, x_m, 0, \ldots, 0)$ is independent of $\mathbf{s}$ and $\mathbf{t}$, and it follows from (\ref{chain rule codim}) and (\ref{rank concn}), with $k=3$ and $j=1$, that
$$
\textnormal{codim} \tsp V_{\mathcal{G}}^* = \textnormal{codim} \tsp V_{F_1}^* \geq \mathcal{C}_1.
$$
Let $V_{\max} = \max_{1 \leq j \leq m} V_j$ and  $V_{\min} = \min_{1 \leq j \leq m} V_j$.
Let $0 < \vartheta < 1$ be such that
$$
- \vartheta + \delta_1 \vartheta + \delta_1 < 0 \ \  \textnormal{  and  } \ \   \delta_1 d + (d-1) \vartheta < \vartheta_0.
$$
By Lemma \ref{Lemma 2.5 in SS} it follows that either
we have
\begin{eqnarray}
\label{case 2 ineq 1}
|\widetilde{E}_{\mathbf{s}, \mathbf{t}; \mathbf{u} }(\alpha)| &\ll& \frac{N^{m + \varepsilon } }{\prod_{i = 1}^m u_i} N^{\delta_1 (\textnormal{codim} \tsp V_{\mathcal{G}}^*)/2^{d-1}}  N^{- (1 - \delta_1) \vartheta (\textnormal{codim} \tsp V_{\mathcal{G}}^*)/2^{d-1} }
\\
&\ll&
\notag
\frac{N^{m + \varepsilon} }{V_1 \cdots V_m } N^{( - \vartheta + \delta_1 \vartheta + \delta_1 ) \mathcal{C}_1/2^{d-1}},
\end{eqnarray}
or there exist $1 \leq q \leq \| F \| N^{\delta_1 d} N^{(d-1) \vartheta}$ and $a \in \mathbb{Z}$ with $\gcd(a,q)=1$ such that
$$
|q \alpha - a | \leq  \left( \frac{N}{ 2 V_{\max} } \right)^{-1}  \left( \frac{N}{ V_{\min} } \right)^{-(d-1) + (d-1) \vartheta} \leq
\frac{N^{\delta_1 (d - (d-1) \vartheta )  + (d-1) \vartheta} }{N^d } \leq N^{\vartheta_0 - d}.
$$
We also have
$\| F \| N^{\delta_1  d} N^{(d-1) \vartheta} \leq N^{\vartheta_0}$ for $N$ sufficiently large.
Finally, by (\ref{reduction case 2-1}), (\ref{def E case 2}) and  (\ref{case 2 ineq 1}) it follows that
\begin{eqnarray}
\label{S tilde est 2}
|\widetilde{S}(\alpha)| \ll N^{n  + ( - \vartheta + \delta_1 \vartheta + \delta_1 ) \mathcal{C}_1 2^{1 - d}  + \varepsilon},
\end{eqnarray}
and we obtain Proposition \ref{main prop} for this case.

\textbf{Case 2}: The rank is concentrated in $\mathbf{u}_2$. Without loss of generality let $I_2 = \{ 1, \ldots,  m \}$.
Let us denote
$$
\mathfrak{d}_{\mathbf{s}, \mathbf{t}}(\mathbf{u}, \mathbf{v} ) = F(u_1 v_1, \ldots, u_m v_m, s_{m+1} t_{m+1}, \ldots, s_n t_n)
$$
and
\begin{eqnarray}
\label{def E case 3}
E_{\mathbf{s}, \mathbf{t} }(\alpha) = \sum_{ \substack{1 \leq u_{j} v_{j} \leq N \\  1 \leq u_{j} \leq U V \\ (1 \leq j \leq m) }}   \nu_2(\mathbf{u})
\prod_{\ell=1}^{m}  \varpi_{\ell}(u_{\ell} v_{\ell})
\cdot e(\alpha  \mathfrak{d}_{\mathbf{s}, \mathbf{t}}(\mathbf{u}, \mathbf{v} ) ),
\end{eqnarray}
where $\nu_2(\mathbf{u}) = \nu_2(u_1) \cdots \nu_2(u_m)$.
Then we have
\begin{eqnarray}
\label{reduction case 3-1}
|\widetilde{S}(\alpha)|
&\leq& \sum_{ \substack{ 1 \leq s_{j_1} t_{j_1} \leq N \\ 1 \leq s_{j_1} \leq  V \\ (j_1 \in I_1)  }}
\ \sum_{ \substack{ 1 \leq s_{j_3} t_{j_3} \leq N \\ s_{j_3} > U,  t_{j_3} > V \\ (j_3 \in I_3) }}
\ \prod_{j_1 \in I_1} (\log t_{j_1})   \varpi_{j_1}(s_{j_1} t_{j_1})  \cdot
\\
\notag
&&
\prod_{j_3 \in I_3} (\log N) \sigma_0 (t_{j_3})  \varpi_{j_3}(s_{j_3} t_{j_3})  \cdot
\left| E_{\mathbf{s}, \mathbf{t}}(\alpha) \right|
\\
\notag
&\ll&
N^{n - m + \varepsilon} \max_{\mathbf{s}, \mathbf{t}}
\left| E_{\mathbf{s}, \mathbf{t}}(\alpha) \right|,
\end{eqnarray}
where the maximum is over the same range of $\mathbf{s}$ and $\mathbf{t}$ as in the above summation.
Let us fix $\mathbf{s}$ and $\mathbf{t}$, and we obtain an estimate for $E_{\mathbf{s}, \mathbf{t}}(\alpha)$ which is independent of
the choice of $\mathbf{s}$ and $\mathbf{t}$.
Let $\mathbf{V} = (V_1, \ldots, V_m)$, where each $V_j$ is a number greater than or equal to $\frac12$ of the form $UV/2^{k'}$ with $k' \in \mathbb{N}$. Then we have
\begin{eqnarray}
\label{ineq in case 3}
|E_{\mathbf{s}, \mathbf{t}}(\alpha)| \ll (\log N)^m  \max_{\mathbf{V}} \  (V_1 \cdots V_m) \ \max_{ \substack{ V_j < u_{j} \leq 2 V_j \\ (1 \leq j \leq m) } }
\left| \widetilde{E}_{\mathbf{s}, \mathbf{t}; \mathbf{u}}(\alpha) \right|,
\end{eqnarray}
where
$$
\widetilde{E}_{\mathbf{s}, \mathbf{t}; \mathbf{u}}(\alpha) = \sum_{ \substack{1 \leq v_{j} \leq N /u_j \\  (1 \leq j \leq m)  }}
\prod_{\ell=1}^{m}  \varpi_{\ell}(u_{\ell} v_{\ell}) \cdot
e(\alpha  \mathfrak{d}_{\mathbf{s}, \mathbf{t}}(\mathbf{u}, \mathbf{v} ) ).
$$
For a fixed $\mathbf{u} \in [1, UV]^{m}$, it is clear that $\mathfrak{d}_{\mathbf{s}, \mathbf{t}}(\mathbf{u}, \mathbf{v})$ is a degree $d$ polynomial in $\mathbf{v}$ and its degree $d$ homogeneous portion $\mathcal{G}(v_1, \ldots, v_m)$ satisfies
$$
\mathcal{G}(v_1, \ldots, v_m) =  F(u_1 v_1, \ldots, u_m v_m, 0, \ldots, 0)
$$
and $\| \mathcal{G} \| \leq (UV)^d \| F \|$. Furthermore, $F_2(x_1, \ldots, x_m) =  F(x_1, \ldots, x_m, 0, \ldots, 0)$ is independent of $\mathbf{s}$ and $\mathbf{t}$, and it follows from (\ref{chain rule codim}) and (\ref{rank concn}), with $k=3$ and $j=2$, that
$$
\textnormal{codim} \tsp V_{\mathcal{G}}^* = \textnormal{codim} \tsp V_{F_2}^* \geq \mathcal{C}_1.
$$
Let $V_{\max} = \max_{1 \leq j \leq m} V_j$ and  $V_{\min} = \min_{1 \leq j \leq m} V_j$.
Let $0 < \vartheta < 1$ be such that
$$
- \vartheta + 2 \delta_1 \vartheta + 2 \delta_1 < 0 \ \  \textnormal{  and  } \ \   2 \delta_1 d + (d-1) \vartheta < \vartheta_0.
$$
By Lemma \ref{Lemma 2.5 in SS} it follows that either
we have
\begin{eqnarray}
\label{case 3 ineq 1}
|\widetilde{E}_{\mathbf{s}, \mathbf{t}; \mathbf{u}}(\alpha)| &\ll& \frac{N^{m + \varepsilon}}{\prod_{i = 1}^m u_i} N^{2 \delta_1 (\textnormal{codim} \tsp V_{\mathcal{G}}^*)/2^{d-1}}  N^{- (1 - 2 \delta_1) \vartheta (\textnormal{codim} \tsp V_{\mathcal{G}}^* ) /2^{d-1} }
\\
\notag
&\ll&
\frac{N^{m + \varepsilon}}{ V_1 \cdots V_m } N^{( - \vartheta + 2 \delta_1 \vartheta + 2 \delta_1 ) \mathcal{C}_1/2^{d-1}},
\end{eqnarray}
or there exist $1 \leq q \leq \| F \| N^{2 \delta_1 d} N^{(d-1) \vartheta}$ and $a \in \mathbb{Z}$ with $\gcd(a,q)=1$ such that
$$
|q \alpha - a | \leq  \left( \frac{N}{ 2 V_{\max} } \right)^{-1}  \left( \frac{N}{ V_{\min} } \right)^{-(d-1) + (d-1) \vartheta} \leq
\frac{N^{2 \delta_1 (d - (d-1) \vartheta)  + (d-1) \vartheta} }{N^d} \leq N^{\vartheta_0 - d}.
$$
We also have
$\| F \| N^{2 \delta_1  d} N^{(d-1) \vartheta} \leq N^{\vartheta_0}$ for $N$ sufficiently large.
Finally, by (\ref{reduction case 3-1}), (\ref{ineq in case 3}) and (\ref{case 3 ineq 1}) it follows that
\begin{eqnarray}
\label{S tilde est 3}
|\widetilde{S}(\alpha)| \ll N^{n  + ( - \vartheta + 2 \delta_1 \vartheta + 2 \delta_1 ) \mathcal{C}_1 2^{1-d} + \varepsilon},
\end{eqnarray}
and we obtain Proposition \ref{main prop} for this case.

\textbf{Case 3}: The rank is concentrated in $\mathbf{u}_3$. Without loss of generality let $I_3 = \{ 1, \ldots,  m_0 \}$.
Let us denote
$$
\mathfrak{d}_{\mathbf{s}, \mathbf{t}} (\mathbf{a}, \mathbf{b} ) = F(a_1 b_1, \ldots, a_{m_0} b_{m_0}, s_{m_0+1} t_{m_0 + 1}, \ldots, s_n t_n)
$$
and
\begin{eqnarray}
E_{\mathbf{s}, \mathbf{t}}(\alpha) = \sum_{ \substack{ 1 \leq a_j b_j \leq N   \\  a_{j} >  U,  b_{j} > V \\ (1 \leq j \leq m_0) }}   \Lambda(\mathbf{a})  \nu_3(\mathbf{b})  \prod_{\ell = 1}^{m_0} \varpi_{\ell}(a_{\ell} b_{\ell}) \cdot e(\alpha  \mathfrak{d}_{\mathbf{s}, \mathbf{t}}(\mathbf{a}, \mathbf{b} ) ),
\end{eqnarray}
where $\nu_3(\mathbf{b}) = \nu_3(b_1) \cdots \nu_3(b_{m_0})$. Then we have
\begin{eqnarray}
\label{case 4 ineq 1}
|\widetilde{S}(\alpha)|
&\leq& \sum_{ \substack{  1 \leq s_{j_1} t_{j_1} \leq N  \\ 1 \leq s_{j_1} \leq  V  \\ (j_1 \in I_1)}} \
\sum_{ \substack{ 1 \leq s_{j_2} t_{j_2} \leq N \\ 1 \leq s_{j_2} \leq  U V  \\ (j_2 \in I_2) }}
\ \prod_{j_1 \in I_1} (\log t_{j_1}) \varpi_{j_1}(s_{j_1} t_{j_1}) \cdot
\\
&&
\prod_{j_2 \in I_2} (\log s_{j_2}) \varpi_{j_2}(s_{j_2} t_{j_2}) \cdot |E_{\mathbf{s}, \mathbf{t}}(\alpha)|
\notag
\\
&\ll&
N^{n - m_0 + \varepsilon} \max_{\mathbf{s}, \mathbf{t}} |E_{\mathbf{s}, \mathbf{t}}(\alpha)|,
\notag
\end{eqnarray}
where the maximum is over the same range of $\mathbf{s}$ and $\mathbf{t}$ as in the above summation.
It is clear that $\mathfrak{d}_{\mathbf{s}, \mathbf{t}}(\mathbf{a}, \mathbf{b})$ is a degree $2d$ polynomial in $\mathbf{a}$ and $\mathbf{b}$, and its degree $2d$ homogeneous portion is
$$
F_3(a_1 b_1, \ldots, a_{m_0} b_{m_0}) =  F(a_1 b_1, \ldots, a_{m_0} b_{m_0}, 0, \ldots, 0).
$$
Furthermore,  $F_3(x_1, \ldots, x_{m_0})$ is independent of $\mathbf{s}$ and $\mathbf{t}$, and
it follows from (\ref{rank concn}), with $k=3$ and $j=3$, that
$$
\textnormal{codim} \tsp V_{F_3}^* \geq \mathcal{C}_1.
$$

Recall we set $U = V = N^{\delta_1}$ in (\ref{defUV}).
Thus for each $1 \leq i \leq m_0$ the range of summation of $a_i$ in $E_{\mathbf{s}, \mathbf{t}}(\alpha)$ is $N^{\delta_1} < a_i < N^{1 - \delta_1}$.
Let
\begin{equation}
\label{delta2}
\delta_2 = (1 - 2 \delta_1)/ H.
\end{equation}
In order to estimate $E_{\mathbf{s}, \mathbf{t}}(\alpha)$, we consider
\begin{eqnarray}
\label{maxlambda}
\\
\notag
|E_{\mathbf{s}, \mathbf{t}}(\alpha) |
&\leq& H^{m_0}  \max_{ \boldsymbol{\lambda} } \Big{|}
\sum_{\substack{ N^{\lambda_i} < a_i \leq N^{\lambda_i + \delta_2}  \\ (1 \leq i \leq m_0) } } \
\sum_{\substack{ V <  b_j \leq N/a_j  \\ (1 \leq j \leq m_0) } }
\Lambda(\mathbf{a})
\nu_3(\mathbf{b})  \prod_{\ell=1}^{m_0} \varpi_{\ell}(a_{\ell} b_{\ell}) \cdot  e \left( \alpha \mathfrak{d}_{\mathbf{s}, \mathbf{t}} (\mathbf{a}, \mathbf{b} ) \right) \Big{|},
\notag
\end{eqnarray}
where the maximum is over all $\boldsymbol{\lambda} = (\lambda_1, \ldots, \lambda_{m_0})$ with
$$
\lambda_i \in \left\{ \delta_1, \delta_1 + \delta_2, \delta_1 + 2 \delta_2, \ldots,
\delta_1 + (H-1) \delta_2  \right\} \ (1 \leq i \leq m_0).
$$
Note $\delta_1 + H \delta_2 = 1 - \delta_1$ and $\delta_1 + \frac{H}{2} \delta_2 = \frac12$ (recall $H$ is even).

Let us fix $\boldsymbol{\lambda}$ for which the maximum occurs in (\ref{maxlambda}).
This gives a partition of $\mathbf{x} = (x_1, \ldots, x_{m_0})$ into $H$ sets $\mathbf{x} = (\mathbf{u}_0, \ldots, \mathbf{u}_{H-1})$,
where $\mathbf{u}_t$ is the collection of $x_i$'s such that  $i$ satisfies $\lambda_i = \delta_1 + t \delta_2$.
By Lemma \ref{lemma rank concn} there exists $j_0 \in \{ 0, 1, \ldots, H - 1 \}$ satisfying
\begin{eqnarray}
\label{codimcondF4}
\textnormal{codim} \tsp V^*_{F_{3,j_0}} \geq   \frac{\mathcal{C}_1 - (H -1) \mathcal{C}_0 }{H},
\end{eqnarray}
where
$$
F_{3,j_0}(\mathbf{u}_{j_0}) = F_{3}(\mathbf{x}) |_{\mathbf{u}_{\ell} = \mathbf{0} \tsp (\ell \not = j_0). }
$$
Let
\begin{equation}
\label{defdelta}
\lambda = \delta_1 + j_0 \delta_2.
\end{equation}
Without loss of generality let $\mathbf{u}_{j_0} = (x_1, \ldots, x_{m})$.
By estimating every other variable trivially, we obtain
\begin{eqnarray}
\label{reduced sum 1}
&&\Big{|} \sum_{\substack{ N^{\lambda_i} < a_i \leq N^{\lambda_i + \delta_2}  \\ (1 \leq i \leq m_0) } } \
\sum_{\substack{ V < b_j \leq N/a_j  \\ (1 \leq j \leq m_0) } }
\Lambda(\mathbf{a})  \nu_3(\mathbf{b})  \prod_{\ell=1}^{m_0} \varpi_{\ell}(a_{\ell} b_{\ell}) \cdot  e \left( \alpha \mathfrak{d}_{\mathbf{s}, \mathbf{t}} (\mathbf{a}, \mathbf{b} ) \right) \Big{|}
\\
&\ll&
N^{m_0 - m + \varepsilon}
\max_{ \widetilde{\mathbf{a}}, \widetilde{\mathbf{b}} }
\Big{|} \sum_{\substack{ N^{\lambda} < u_i \leq N^{\lambda + \delta_2}  \\ (1 \leq i \leq m) } } \
\sum_{\substack{ V <  v_j \leq N/u_j  \\ (1 \leq j \leq m) } }
\Lambda(\mathbf{u})
\nu_3(\mathbf{v})
\prod_{\ell = 1}^{m} \varpi_{\ell}(u_{\ell} v_{\ell})
\cdot  e \left( \alpha \mathfrak{d}_{j_0}(\mathbf{u},  \mathbf{v}) \right) \Big{|},
\notag
\end{eqnarray}
where the maximum is over the same range of
$\widetilde{\mathbf{a}} = (a_{m + 1}, \ldots, a_{m_0} )$ and $\widetilde{\mathbf{b}} = (b_{m + 1}, \ldots, b_{m_0} )$
as in the above summation,
and
\begin{eqnarray}
\notag
\mathfrak{d}_{j_0}(\mathbf{u}, \mathbf{v}) &=& \mathfrak{d}_{j_0}(u_1v_1, \ldots, u_m v_m)
\\
\notag
&=& \mathfrak{d}_{\mathbf{s}, \mathbf{t}} (u_1 v_1, \ldots, u_m v_m,  a_{m+1} b_{m+1}, \ldots, a_{m_0} b_{m_0})
\\
&=&  F( u_1 v_1, \ldots, u_m v_m,  a_{m+1} b_{m+1}, \ldots, a_{m_0} b_{m_0}, s_{m_0+1} t_{m_0 + 1}, \ldots, s_n t_n).
\notag
\end{eqnarray}
In particular, the coefficients of the lower degree terms of $\mathfrak{d}_{j_0}$ may depend on $\mathbf{s}, \mathbf{t}$, $\widetilde{\mathbf{a}}$ and $\widetilde{\mathbf{b}}$. The degree $2d$ homogeneous portion of $\mathfrak{d}_{j_0}(\mathbf{u}, \mathbf{v})$ is
$\mathfrak{d}_{j_0}^{[2d]}(\mathbf{u};\mathbf{v})  = F_{3, j_0}(u_1v_1, \ldots, u_{m}v_m)$.
With this set-up it follows from Proposition \ref{semiprimeprop} and (\ref{codimcondF4}) that
\begin{eqnarray}
\label{codim dj0}
\textnormal{codim} \tsp V^*_{ \mathfrak{d}_{j_0}^{[2d]}, 1} = \textnormal{codim} \tsp V^*_{ \mathfrak{d}_{j_0}^{[2d]}, 2} \geq \frac{
\textnormal{codim} \tsp V^*_{F_{3,j_0}}}{2} \geq  \frac{1}{2} \cdot  \frac{ \mathcal{C}_1 - (H -1) \mathcal{C}_0 }{H}.
\end{eqnarray}
We estimate the sum on the right hand side of (\ref{reduced sum 1}) in the next section.

\section{Weyl differencing and geometry of numbers}
\label{WD1}
Let $\delta_1$ and $\delta_2$ be as in Section \ref{sec exp sum} and let
$\lambda > 0$ be any real number. In order to estimate the sum on the right hand side of (\ref{reduced sum 1}), we consider the two cases $\lambda + \delta_2 \leq 1/2$ and $\lambda \geq 1/2$ separately.
The reason for treating these two cases separately is because our argument depends on which of the set of variables, $\mathbf{u}$ or $\mathbf{v}$, has a longer range of summation (The difference in the weights, namely $\Lambda(\mathbf{u})$ and $\nu_3 (\mathbf{v})$, does not affect the argument.). In the former case, $\mathbf{v}$ has a longer range of summation than $\mathbf{u}$, and vice versa for the latter case. However, the arguments are similar; the argument for the latter case becomes identical to that for the former case after a proper set-up.
We consider the case $\lambda \geq \delta_1$ and $\lambda + \delta_2 \leq 1/2$ in Section \ref{WD1'}, and
the case $\lambda \geq 1/2$ and $\lambda + \delta_2 \leq 1 - \delta_1$ in Section \ref{WD2}.

\subsection{$\lambda \geq \delta_1$ and $\lambda + \delta_2 \leq 1/2$}
\label{WD1'}
Let
\begin{eqnarray}
\label{WDI eqn1}
\mathfrak{T}(\alpha) = 
\sum_{\substack{ N^{ \lambda } <  u_j \leq N^{ \lambda + \delta_2}  \\ (1 \leq j \leq m)  } } \
\sum_{\substack{ V <  v_i \leq  N/u_i \\ (1 \leq i \leq m)  } }
\Lambda(\mathbf{u}) \nu_3 (\mathbf{v}) \prod_{\ell=1}^{m} \varpi_{\ell}(u_{\ell} v_{\ell})  \cdot  e \left( \alpha \mathfrak{d}_{j_0}(\mathbf{u}, \mathbf{v}) \right);
\end{eqnarray}
this is precisely the sum on the right hand side of (\ref{reduced sum 1}) when $\lambda$ is chosen as in (\ref{defdelta}).
Let $\mathbf{U} = (U_1, \ldots, U_m)$, where each $U_j$ is a number less than $N^{\lambda + \delta_2}$ of the form $2^{k'} N^{\lambda}$ with $k' \in \mathbb{Z}_{\geq 0}$. Then we have
\begin{eqnarray}
\label{WDI eqn2}
|\mathfrak{T}(\alpha)| \ll
(\log N)^{m} \ \max_{\mathbf{U}} |\mathfrak{T}_{\mathbf{U}} (\alpha)|,
\end{eqnarray}
where
\begin{eqnarray}
\mathfrak{T}_{\mathbf{U}}(\alpha)
\label{somesumT}
= \sum_{\substack{ U_j < u_j \leq 2 U_j \\ (1 \leq j \leq m)  } } \
\sum_{\substack{ V <  v_i \leq  N/u_i  \\ (1 \leq i \leq m)  } }
\Lambda(\mathbf{u})  \nu_3 (\mathbf{v})
\prod_{\ell=1}^{m} \varpi_{\ell}(u_{\ell} v_{\ell})
\cdot  e \left( \alpha g(\mathbf{v}, \mathbf{u}) \right)
\end{eqnarray}
and
$$
g(\mathbf{v}, \mathbf{u}) = \mathfrak{d}_{j_0}( \mathbf{u}, \mathbf{v} ).
$$
\begin{rem}
\label{remrange}
When $N^{\lambda + \delta_2}/2 \leq U_j < N^{\lambda + \delta_2}$, we replace the range of summation $U_j < u_j \leq 2 U_j$ in (\ref{somesumT}) with
$U_j < u_j \leq N^{\lambda + \delta_2}$.
\end{rem}
Clearly $g^{[2d]}(\mathbf{v}; \mathbf{u})$ is bihomogeneous of bidegree $(d,d)$.
It follows from (\ref{codim dj0}) that
\begin{eqnarray}
\label{codim ineq1}
\textnormal{codim} \tsp V^*_{g^{[2d]},1} = \textnormal{codim} \tsp V^*_{ \mathfrak{d}_{j_0}^{[2d]},2} \geq \frac{1}{2} \cdot \frac{\mathcal{C}_1 - (H -1)\mathcal{C}_0}{ H }.
\end{eqnarray}
Let us make the following changes in the notation: $\mathbf{x} = (x_1, \ldots, x_m) = \mathbf{v}$,  $\mathbf{y} = (y_1, \ldots, y_m) = \mathbf{u}$  and $Y_{i} = U_i$ $(1 \leq i \leq m)$;
these changes simplify the set-up in Section \ref{WD2}. By changing the order of summation in (\ref{somesumT}), we have
\begin{eqnarray}
\label{WDI eqn3}
\mathfrak{T}_{\mathbf{U}}(\alpha) &=&
\sum_{\substack{ V < x_i < N/ Y_i  \\ (1 \leq i \leq m)  } }
\nu_3 (\mathbf{x})
\sum_{\substack{ Y_j < y_j  \leq 2 Y_j \\ (1 \leq j \leq m)  } } \Lambda(\mathbf{y})   \psi(\mathbf{x} ; \mathbf{y})
 e ( \alpha g(\mathbf{x}, \mathbf{y}) ),
\end{eqnarray}
where
\begin{eqnarray}
\label{defn of weight WDI}
\psi(\mathbf{x} ; \mathbf{y})  =  \prod_{\ell=1}^{m} \varpi_{\ell}(x_{\ell} y_{\ell})  \mathbbm{1}_{[0, N/ y_{\ell}] } (x_{\ell}).
\end{eqnarray}

We let $\mathbf{j} = (j_1, \ldots, j_d)$ and $\mathbf{k} = (k_1, \ldots, k_d)$, and denote
\begin{eqnarray}
\label{defng2dsymm}
g^{[2d]}(\mathbf{x};\mathbf{y}) = \sum_{j_1 = 1}^{m} \cdots \sum_{j_d = 1}^{m}
\sum_{k_1 = 1}^{m} \cdots \sum_{k_d = 1}^{m}
G_{\mathbf{j}, \mathbf{k}} \tsp  x_{j_1} \cdots x_{j_d} \tsp y_{k_1} \cdots y_{k_d}
\end{eqnarray}
with each $G_{\mathbf{j}, \mathbf{k}} \in \mathbb{Q}$ symmetric in $(j_1, \ldots, j_d)$ and also in $(k_1, \ldots, k_d)$. Note we have $(d!)^2 G_{\mathbf{j}, \mathbf{k}} \in \mathbb{Z}$. Also $G_{\mathbf{j}, \mathbf{k}} = 0$ unless $(j_1, \ldots, j_d)$ is a permutation of $(k_1, \ldots, k_d)$; this is because  $g^{[2d]}(\mathbf{x};\mathbf{y}) = F_3(x_1y_1, \ldots, x_m y_m)$.

We denote
\begin{eqnarray}
\label{WDI eqn4}
\widetilde{ Y } = \prod_{i=1}^{m} Y_i, \ \ Y_{\max} = \max_{1 \leq i \leq m} Y_i \ \ \text{  and  } \ \ Y_{\min} = \min_{1 \leq i \leq m} Y_i.
\end{eqnarray}
For simplicity we also let $X_{\ell} = N/Y_{\ell}$ $(1 \leq \ell \leq m)$ and denote
\begin{eqnarray}
\label{WDI eqn5}
\widetilde{ X } = \prod_{i=1}^{m} X_i, \ \ X_{\max} = \max_{1 \leq i \leq m} X_i \ \ \text{  and  } \ \ X_{\min} = \min_{1 \leq i \leq m} X_i.
\end{eqnarray}
It follows from the construction that
\begin{eqnarray}
\label{construction1}
N^{\delta_1} \leq N^{\lambda} \leq Y_i \leq X_i \ \ (1 \leq i \leq m) \  \ \ \textnormal{  and  }  \  \ \ |Y_j/Y_i| \leq N^{\delta_2} \ \ (1 \leq i, j \leq m).
\end{eqnarray}
By H\"{o}lder's inequality we obtain
\begin{equation}
\label{ineq 1-1}
|\mathfrak{T}_{\mathbf{U}}(\alpha)|^{ 2^{d - 1} } \ll N^{\varepsilon} \widetilde{X}^{2^{d - 1} - 1} \sum_{\substack{ V <  x_i <  X_i \\ (1 \leq i \leq m)}}
| T_{\mathbf{x}} ({\alpha}) |^{2^{d - 1}},
\end{equation}
where
\begin{eqnarray}
\label{somesumTTT}
T_{\mathbf{x}} ({\alpha}) = \sum_{\substack{Y_j < y_j \leq  2 Y_j \\ (1 \leq j \leq m)}} \Lambda(\mathbf{y})   \psi(\mathbf{x} ; \mathbf{y})
e ( \alpha g( \mathbf{x}, \mathbf{y} ) ).
\end{eqnarray}
Here Remark \ref{remrange}, with $Y_j$, $y_j$ and (\ref{somesumTTT}) in place of $U_j$, $u_j$ and (\ref{somesumT}) respectively, applies.
Next we use a form of Weyl's inequality as in \cite[Lemma 11.1]{S} to bound $| T_{\mathbf{x}} (\alpha) |^{2^{d} - 1}$.
Given a subset $\mathcal{X} \subseteq \mathbb{R}^{m}$ we
denote $\mathcal{X}^D = \mathcal{X} - \mathcal{X} = \{ \mathbf{z} - \mathbf{z}' : \mathbf{z}, \mathbf{z}' \in \mathcal{X} \}$,
and also for any $\mathbf{z}_1, \ldots, \mathbf{z}_t \in \mathbb{R}^{m}$ we let
$$
\mathcal{X} (\mathbf{z}_1, \ldots, \mathbf{z}_t) = \medcap_{\epsilon_1 = 0}^1 \cdots \medcap_{\epsilon_t = 0}^1
(\mathcal{X} - \epsilon_1 \mathbf{z}_1 - \cdots - \epsilon_t \mathbf{z}_t).
$$
Let us define
\begin{eqnarray}
\mathcal{U} = (Y_1, 2 Y_1]   \times \cdots \times (Y_m, 2 Y_m]
\ \  \text{  and  } \  \ \mathcal{V} = (V, X_1)   \times \cdots \times (V, X_m).
\notag
\end{eqnarray}
Note when $N^{\lambda + \delta_2}/2 \leq Y_j < N^{\lambda + \delta_2}$, we replace $(Y_j, 2 Y_j]$ in the definition of $\mathcal{U}$ with $(Y_j, N^{\lambda + \delta_2}]$.

Let
\begin{eqnarray}
\label{defnF=g}
\mathcal{F}(\mathbf{y}) =  \alpha g( \mathbf{x}, \mathbf{y} ).
\end{eqnarray}
For each $t \in \mathbb{N}$ we denote
\begin{eqnarray}
\label{defnFd}
\mathcal{F}_{t}(\mathbf{y}_1, \ldots, \mathbf{y}_{t}) = \sum_{\epsilon_1 = 0}^1 \cdots \sum_{\epsilon_{t} = 0}^1 \,
(-1)^{\epsilon_1 + \cdots + \epsilon_{t} }
\mathcal{F}( \epsilon_1 \mathbf{y}_1 + \cdots + \epsilon_{t} \mathbf{y}_{t} ),
\end{eqnarray}
and $\mathcal{F}_0 = 0$ identically. We let
$\psi_{\mathbf{y}_1} (\mathbf{x};  \mathbf{z}) = \psi(\mathbf{x};  \mathbf{z}) \tsp \psi(\mathbf{x}; \mathbf{y}_1 +  \mathbf{z} )$
and recursively define
$$
\psi_{\mathbf{y}_1, \ldots, \mathbf{y}_{t}} (\mathbf{x};  \mathbf{z}) = \psi_{\mathbf{y}_1, \ldots, \mathbf{y}_{t - 1}} (\mathbf{x};  \mathbf{z}) \tsp  \psi_{\mathbf{y}_1, \ldots, \mathbf{y}_{t - 1}} (\mathbf{x};  \mathbf{y}_t + \mathbf{z} )
$$
for $t \geq 2$. We also define $\Lambda_{ \mathbf{y}_1, \ldots, \mathbf{y}_{t} }$ in a similar manner. By following the proof of
\cite[Lemma 11.1]{S}, while taking into account the weights, we obtain
\begin{eqnarray}
\notag
| T_{\mathbf{x}} ({\alpha}) |^{2^{t - 1} }
&\leq&  | \mathcal{U}^D |^{2^{t-1} - t} \sum_{\mathbf{y}_1 \in \mathcal{U}^D} \cdots
\sum_{\mathbf{y}_{t - 1} \in \mathcal{U}^D}
\\
\notag
&& \Big{|}  \sum_{\mathbf{y}_t \in \mathcal{U}(\mathbf{y}_1, \ldots, \mathbf{y}_{t - 1})  }
\Lambda_{\mathbf{y}_1, \ldots, \mathbf{y}_{t - 1}} (\mathbf{y}_t) \tsp  \psi_{\mathbf{y}_1, \ldots, \mathbf{y}_{t - 1}} (\mathbf{x};  \mathbf{y}_t)
\tsp e ( \mathcal{F}_{t}(\mathbf{y}_1, \ldots, \mathbf{y}_t) ) \Big{|}
\end{eqnarray}
for each $t \geq 1$. Then it follows by taking the square of the inequality (with $t = d-1$) and applying
the Cauchy-Schwarz inequality that
\begin{eqnarray}
\notag
| T_{\mathbf{x}} ({\alpha}) |^{2^{d - 1} }
&\leq&  | \mathcal{U}^D |^{2^{d - 1} - d} \sum_{\mathbf{y}_1 \in \mathcal{U}^D} \cdots
\sum_{\mathbf{y}_{d - 2} \in \mathcal{U}^D}
\\
\notag
&& \Big{|}  \sum_{\mathbf{z} \in \mathcal{U}(\mathbf{y}_1, \ldots, \mathbf{y}_{d - 2})  }
\Lambda_{\mathbf{y}_1, \ldots, \mathbf{y}_{d - 2}} (\mathbf{z}) \tsp  \psi_{\mathbf{y}_1, \ldots, \mathbf{y}_{d - 2}} (\mathbf{x};  \mathbf{z})
\tsp e ( \mathcal{F}_{d - 1}(\mathbf{y}_1, \ldots, \mathbf{y}_{d - 2}, \mathbf{z}) ) \Big{|}^2.
\end{eqnarray}
Given $\mathbf{z}, \mathbf{z}' \in \mathcal{U}(\mathbf{y}_1, \ldots, \mathbf{y}_{d - 2})$ let us set
$\mathbf{y}_{d - 1} = (\mathbf{z}' - \mathbf{z} ) \in \mathcal{U}(\mathbf{y}_1, \ldots, \mathbf{y}_{d - 2})^D$
and
$$
\mathbf{y}_{d} =  \mathbf{z} \in \mathcal{U}(\mathbf{y}_1, \ldots, \mathbf{y}_{d - 2}) \medcap (\mathcal{U}(\mathbf{y}_1, \ldots, \mathbf{y}_{d - 2}) -
\mathbf{y}_{d-1}) =  \mathcal{U}(\mathbf{y}_1, \ldots, \mathbf{y}_{d - 1}).
$$
Then it follows from (\ref{defnFd}) that
\begin{eqnarray}
&&
\mathcal{F}_{d-1}(\mathbf{y}_1, \ldots, \mathbf{y}_{d-2}, \mathbf{z}) - \mathcal{F}_{d-1}(\mathbf{y}_1, \ldots, \mathbf{y}_{d-2}, \mathbf{z}')
\notag
\\
&=&
\mathcal{F}_{d-1}(\mathbf{y}_1, \ldots, \mathbf{y}_{d-2}, \mathbf{y}_{d}) - \mathcal{F}_{d-1}(\mathbf{y}_1, \ldots, \mathbf{y}_{d-2}, \mathbf{y}_{d - 1} + \mathbf{y}_{d})
\notag
\\
&=& \mathcal{F}_{d}(\mathbf{y}_1, \ldots, \mathbf{y}_{d-1}, \mathbf{y}_{d}) - \mathcal{F}_{d-1}(\mathbf{y}_1, \ldots, \mathbf{y}_{d-2}, \mathbf{y}_{d - 1}).
\notag
\end{eqnarray}
Therefore, we obtain
\begin{eqnarray}
\label{ineq 1-2}
|T_{\mathbf{x}} ({\alpha}) |^{2^{d-1}}
&\leq&
| \mathcal{U}^D |^{2^{d - 1} - d}
\sum_{\mathbf{y}_1 \in \mathcal{U}^D} \cdots \sum_{\mathbf{y}_{d - 2} \in \mathcal{U}^D}
\ \sum_{\mathbf{y}_{d - 1} \in \mathcal{U}(\mathbf{y}_1, \ldots, \mathbf{y}_{d - 2})^D  }
\\
\notag
&&   \sum_{\mathbf{y}_{d} \in \mathcal{U}(\mathbf{y}_1, \ldots, \mathbf{y}_{d - 1})  }
\Lambda_{\mathbf{y}_1, \ldots, \mathbf{y}_{d - 1}} (\mathbf{y}_d)  \tsp
\psi_{\mathbf{y}_1, \ldots, \mathbf{y}_{d - 1}} (\mathbf{x}; \mathbf{y}_d) \cdot
\\
&& e ( \mathcal{F}_{d}(\mathbf{y}_1, \ldots, \mathbf{y}_{d}) - \mathcal{F}_{d-1}(\mathbf{y}_1, \ldots, \mathbf{y}_{d - 1}) ).
\notag
\end{eqnarray}
We note that $\mathcal{U}^D$, $\mathcal{U}(\mathbf{y}_1, \ldots, \mathbf{y}_{d - 2})^D$ are boxes contained in $(-Y_1, Y_1) \times \cdots \times (-Y_{m}, Y_{m})$, and $\mathcal{U}(\mathbf{y}_1, \ldots, \mathbf{y}_{d - 1})$ is a box contained in $(Y_1, 2Y_1] \times \cdots \times (Y_{m}, 2 Y_{m}]$.

By \cite[Lemma 11.4]{S} the polynomial $\mathcal{F}_{d}$ is the unique symmetric multilinear form associated to $\mathcal{F}^{[d]}$, the degree $d$ homogeneous portion (as a polynomial in $\mathbf{y}$) of $\mathcal{F}$, i.e. $\mathcal{F}_{d}$ satisfies
$$
\mathcal{F}_{d}(\mathbf{y}, \ldots, \mathbf{y}) = (-1)^d \,  d! \, \mathcal{F}^{[d]}(\mathbf{y}).
$$
Since $\mathcal{F}^{[d]}(\mathbf{y}) = \alpha g^{[2d]}(\mathbf{x};\mathbf{y})$,
it follows that $\mathcal{F}_{d}$  does not depend on the terms of $g$ with degrees less than $2d$.
Recalling the notation from (\ref{defng2dsymm}) and making use of \cite[Lemma 11.2]{S} and \cite[Lemma 11.4]{S}, it follows that
$$
\mathcal{F}_{d}(\mathbf{y}_1, \ldots, \mathbf{y}_d)
 =
(-1)^d d!  \alpha \sum_{\mathbf{j}} \sum_{\mathbf{k}} G_{ \mathbf{j}, \mathbf{k} } \tsp  x_{j_1} \cdots x_{j_{d}} \tsp y_{1, k_1} \cdots y_{{d}, k_{d}}
$$
and
$$
- \mathcal{F}_{d-1}(\mathbf{y}_1, \ldots, \mathbf{y}_{d - 1})
= \alpha \sum_{\mathbf{j}} \sum_{\mathbf{k}} G_{ \mathbf{j}, \mathbf{k} } \tsp  x_{j_1} \cdots x_{j_{d}} \tsp
\widetilde{\mathcal{H}}_{\mathbf{k}}(\mathbf{y}_1, \ldots, \mathbf{y}_{d-1})
+
\widetilde{\mathcal{H}}'_{\mathbf{x}}(\mathbf{y}_1, \ldots, \mathbf{y}_{d-1}),
$$
where the summations over $\mathbf{j}$ and $\mathbf{k}$ are as in (\ref{defng2dsymm}),
and $\widetilde{\mathcal{H}}_{\mathbf{k}}$ and $\widetilde{\mathcal{H}}'_{\mathbf{x}}$ are degree $d$ and $d-1$ homogeneous forms respectively,
both independent of $\mathbf{y}_{d}$.
We note that for fixed $\mathbf{y}_1, \ldots, \mathbf{y}_{d-1}$,
$\widetilde{\mathcal{H}}'_{\mathbf{x}}(\mathbf{y}_1, \ldots, \mathbf{y}_{d-1})$ is a degree $d-1$ homogeneous form in $\mathbf{x}$.
For simplicity let us denote
$$
\mathcal{H}_{\mathbf{k}}(\mathbf{y}_1, \ldots, \mathbf{y}_{d}) = (-1)^d d! \tsp y_{1, k_1} \cdots y_{{d}, k_{d}} + \widetilde{\mathcal{H}}_{\mathbf{k}}(\mathbf{y}_1, \ldots, \mathbf{y}_{d-1}).
$$

Let us write $\underline{\mathbf{y}} = (\mathbf{y}_1, \ldots, \mathbf{y}_{d})$.
We substitute the inequality (\ref{ineq 1-2}) into (\ref{ineq 1-1}), and we interchange the order of summation moving
the sum over $\mathbf{x}$ inside the sums over $\mathbf{y}_j$'s.
Then we apply H\"{o}lder's inequality to obtain
\begin{eqnarray}
\label{Hod1}
|\mathfrak{T}_{\mathbf{U}}(\alpha)|^{2^{d + d -2}} \ll
N^{\varepsilon (1 + 2^{d-1})}
 \widetilde{X}^{2^{d + d -2} - 2^{d -1} } \ \widetilde{Y}^{2^{d + d -2} - d} \
\sum_{\mathbf{y}_1} \cdots \sum_{\mathbf{y}_{d}}  | T_{ \underline{\mathbf{y}} } ({\alpha}) |^{2^{d - 1} },
\end{eqnarray}
where the range of summation of each $\mathbf{y}_j$ is the same as in (\ref{ineq 1-2}), and
$$
T_{ \underline{\mathbf{y}} } (\alpha) = \sum_{ \mathbf{x} \in \mathcal{V}}
\psi_{\mathbf{y}_1, \ldots, \mathbf{y}_{d - 1}} (\mathbf{x}; \mathbf{y}_d) \
e \left(  \alpha \sum_{\mathbf{j}} \sum_{\mathbf{k}} G_{\mathbf{j}, \mathbf{k} } \tsp  x_{j_1} \cdots x_{j_{d}} \tsp
\mathcal{H}_{\mathbf{k}}(\underline{\mathbf{y}})   +    \widetilde{\mathcal{H}}'_{\mathbf{x}}(\mathbf{y}_1, \ldots, \mathbf{y}_{d-1}) \right).
$$

Let us set $\widehat{\mathbf{y}} = (\mathbf{y}_1, \ldots, \mathbf{y}_{d - 1})$.
Let $\psi_{ \widehat{\mathbf{y}} }(\mathbf{x};  \mathbf{y}_d )
= \psi_{ \mathbf{y}_1, \ldots, \mathbf{y}_{d-1} }(\mathbf{x};  \mathbf{y}_d )$.
Similarly as before we let
$\psi_{\mathbf{x}_1; \widehat{\mathbf{y}}} (\mathbf{z};  \mathbf{y}_d) = \psi_{\widehat{\mathbf{y}}}(\mathbf{z};  \mathbf{y}_d)
\tsp \psi_{ \widehat{\mathbf{y}}}(\mathbf{x}_1 + \mathbf{z};  \mathbf{y}_d )$
and recursively define
$$
\psi_{\mathbf{x}_1, \ldots, \mathbf{x}_{t};  \widehat{\mathbf{y}}} (\mathbf{z};  \mathbf{y}_d)
= \psi_{\mathbf{x}_1, \ldots, \mathbf{x}_{t-1};  \widehat{\mathbf{y}}} (\mathbf{z};  \mathbf{y}_d) \tsp
\psi_{\mathbf{x}_1, \ldots, \mathbf{x}_{t-1};  \widehat{\mathbf{y}}} ( \mathbf{x}_{t} + \mathbf{z} ;  \mathbf{y}_d )
$$
for $t \geq 2$. We also set $\widehat{\mathbf{x}} = (\mathbf{x}_1, \ldots, \mathbf{x}_{d - 1})$ and $\underline{\mathbf{x}} = (\mathbf{x}_1, \ldots, \mathbf{x}_{d})$.
Let $\psi_{\widehat{\mathbf{x}} ; \widehat{\mathbf{y}} } (\mathbf{z}; \mathbf{y}_d) = \psi_{ \mathbf{x}_1, \ldots, \mathbf{x}_{d - 1} ; \widehat{\mathbf{y}} } (\mathbf{z}; \mathbf{y}_d)$. Applying the same differencing process as before to $T_{ \underline{\mathbf{y}} } (\alpha)$,
the inequality (\ref{Hod1}) becomes
\begin{eqnarray}
\label{ineq 2'}
|\mathfrak{T}_{\mathbf{U}}(\alpha)|^{2^{2 d -2}} &\ll&
N^{\varepsilon (1 + 2^{d-1})}
\widetilde{X}^{2^{2 d -2} - d } \widetilde{Y}^{2^{2 d -2} - d}
\sum_{\mathbf{y}_1} \cdots \sum_{\mathbf{y}_{d}} \sum_{\mathbf{x}_1 \in \mathcal{V}^D} \cdots \sum_{\mathbf{x}_{d - 2}  \in \mathcal{V}^D}
\\
\notag
&& \sum_{\mathbf{x}_{d - 1} \in \mathcal{V}(\mathbf{x}_1, \ldots, \mathbf{x}_{d-2})^D} \Big{|} \sum_{\mathbf{x}_{d} \in \mathcal{V}(\mathbf{x}_1, \ldots, \mathbf{x}_{d-1})}
\psi_{\widehat{\mathbf{x}} ; \widehat{\mathbf{y}} } (\mathbf{x}_d ; \mathbf{y}_d)
e(  \gamma(\underline{\mathbf{x}}; \underline{\mathbf{y}}))  \Big{|},
\end{eqnarray}
where
$$
\gamma(\underline{\mathbf{x}}; \underline{\mathbf{y}}) =
(-1)^d d! \alpha \sum_{\mathbf{j}} \sum_{\mathbf{k}} \tsp G_{ \mathbf{j}, \mathbf{k} }
\tsp x_{1, j_1} \cdots x_{d, j_{d}} \tsp  \mathcal{H}_{\mathbf{k}}( \underline{\mathbf{y}} ).
$$
We note that $\mathcal{V}^D$, $\mathcal{V}(\mathbf{x}_1, \ldots, \mathbf{x}_{d - 2})^D$ and $\mathcal{V}(\mathbf{x}_1, \ldots, \mathbf{x}_{d - 1})$
are boxes contained in $(-X_1, X_1) \times \cdots \times (-X_{m}, X_{m})$.
We now change the order of summation in (\ref{ineq 2'}), and bound the exponential sum
\begin{equation}
\label{sum 1}
\sum_{\mathbf{y}_{d} \in \mathcal{U}(\mathbf{y}_1, \ldots, \mathbf{y}_{d-1}) } \Big{|} \sum_{\mathbf{x}_{d} \in \mathcal{V}(\mathbf{x}_1, \ldots, \mathbf{x}_{d-1}) }
\psi_{ \widehat{\mathbf{x}} ; \widehat{\mathbf{y}} }  (\mathbf{x}_d ; \mathbf{y}_d)
e (  \gamma(\underline{\mathbf{x}}; \underline{\mathbf{y}})  )  \Big{|}.
\end{equation}
Recall the definitions of $\psi$  and $\varpi_j$ given in (\ref{defn of weight WDI}) and (\ref{defn of varpi}) respectively.
First we have $\psi_{ \widehat{\mathbf{x}} ; \widehat{\mathbf{y}} }  (\mathbf{x}_d ; \mathbf{y}_d)
= \prod_{\ell=1}^{m} \psi_{ \widehat{\mathbf{x}} ; \widehat{\mathbf{y}} ; \ell }  (x_{d, \ell} ; y_{d, \ell} )$,
where
\begin{eqnarray}
\psi_{ \widehat{\mathbf{x}} ; \widehat{\mathbf{y}} ; \ell }   (x_{d, \ell} ; y_{d, \ell} )
\notag
&=&
\prod_{ \substack{ 1 \leq j \leq d \\   0< i_1 < i_2 < \cdots < i_{j-1} < d } } \
\prod_{ \substack{ 1 \leq k \leq d \\   0< i'_1 < i'_2 < \cdots < i'_{k-1} < d } }
\\
&&
\notag
\omega \left( \frac{ (x_{i_1, \ell } + \cdots + x_{i_{j-1}, \ell} + x_{d, \ell}) (y_{i'_1, \ell } + \cdots + y_{i'_{k-1}, \ell} + y_{d, \ell})   }{N} - x_{0, \ell} \right) \cdot
\\
\notag
&& \mathbbm{1}_{[0, N/ ( y_{i'_1,  {\ell} } + \cdots + y_{i'_{k-1},  {\ell} } + y_{d, {\ell} } )] } \left(x_{i_1, \ell} + \cdots + x_{i_{j-1}, \ell} + x_{d, \ell} \right);
\end{eqnarray}
we interpret $x_{i_1, \ell } + \cdots + x_{i_{j-1}, \ell} + x_{d, \ell}$ as $x_{d, \ell}$ when $j=1$, and similarly
$y_{i'_1,  {\ell} } + \cdots + y_{i'_{k-1},  {\ell} } + y_{d, {\ell} }$ as $y_{d, \ell}$ when $k=1$.
Let $\mathcal{V}(\mathbf{x}_1, \ldots, \mathbf{x}_{d-1}) =
\prod_{\ell=1}^m  \mathcal{V}(\mathbf{x}_1, \ldots, \mathbf{x}_{d-1})^{(\ell)} \subseteq \mathbb{R}^m$,
where each $\mathcal{V}(\mathbf{x}_1, \ldots, \mathbf{x}_{d-1})^{(\ell)}$ is an interval contained in $(-X_{\ell}, X_{\ell})$.
With these notation we have
\begin{eqnarray}
\label{sum 1+usedtobe}
&&\sum_{\mathbf{y}_{d} \in \mathcal{U}(\mathbf{y}_1, \ldots, \mathbf{y}_{d - 1})} \Big{|} \sum_{\mathbf{x}_{d} \in \mathcal{V}(\mathbf{x}_1, \ldots, \mathbf{x}_{d-1}) } \psi_{ \widehat{\mathbf{x}} ; \widehat{\mathbf{y}}}   (\mathbf{x}_{d} ; \mathbf{y}_{d} )  e (  \gamma(\underline{\mathbf{x}}; \underline{\mathbf{y}})  )   \Big{|}
\\
\notag
&&\ll \sum_{\mathbf{y}_{d} \in \mathcal{U}(\mathbf{y}_1, \ldots, \mathbf{y}_{d - 1})} \prod_{\ell = 1}^{m}   \Big{|}
\sum_{x_{d, \ell} \in \mathcal{V}(\mathbf{x}_1, \ldots, \mathbf{x}_{d-1})^{(\ell)} }
\psi_{ \widehat{\mathbf{x}} ; \widehat{\mathbf{y}} ; \ell }   (x_{d, \ell} ; y_{d, \ell} )
e  (  \widetilde{\gamma}(\widehat{\mathbf{x}}, x_{d, \ell} \, \mathbf{e}_{\ell} ; \underline{\mathbf{y}} )  )  \Big{|},
\end{eqnarray}
where $\mathbf{e}_{\ell}$ is the $\ell$-th unit vector and $\widetilde{\gamma}$ is given by
$$
\widetilde{\gamma}(\underline{\mathbf{x}} ; \underline{\mathbf{y}} )
=(-1)^d  d! \alpha \sum_{ \mathbf{j} } \sum_{ \mathbf{k} } G_{ \mathbf{j}, \mathbf{k} } \tsp x_{1, j_1} \cdots x_{d, j_{d}} \tsp
\mathcal{H}_{\mathbf{k}}(\underline{\mathbf{y}}).
$$
Since the intersection of two intervals is still an interval (possibly the empty set),
the subset of $\mathcal{V}(\mathbf{x}_1, \ldots, \mathbf{x}_{d-1})^{(\ell)}$
for which the summand in the following sum
\begin{align}
\notag
\sum_{x_{d, \ell} \in \mathcal{V}(\mathbf{x}_1, \ldots, \mathbf{x}_{d-1})^{(\ell)}}
\prod_{ \substack{ 1 \leq j \leq d \\   0< i_1 < i_2 < \cdots < i_{j-1} < d } } \
&\prod_{ \substack{ 1 \leq k \leq d \\   0< i'_1 < i'_2 < \cdots < i'_{k-1} < d } }
\\
\notag
&\mathbbm{1}_{[0, N/ ( y_{i'_1,  {\ell} } + \cdots + y_{i'_{k-1},  {\ell} } + y_{d, {\ell} } )] } (x_{i_1, \ell} + \cdots + x_{i_{j-1}, \ell} + x_{d, \ell} )
\end{align}
is non-zero is an interval contained in $(-X_{\ell}, X_{\ell})$.
Given $z \in \mathbb{R}$ let $\| z \| = \min_{w \in \mathbb{Z}} |z - w|$.
Therefore, by partial summation we obtain
\begin{eqnarray}
\label{inequality17}
&&\Big{|}
\sum_{x_{d, \ell} \in \mathcal{V}(\mathbf{x}_1, \ldots, \mathbf{x}_{d-1})^{(\ell)} }
\psi_{ \widehat{\mathbf{x}} ; \widehat{\mathbf{y}} ; \ell }   (x_{d, \ell} ; y_{d, \ell} )
e  (  \widetilde{\gamma}(\widehat{\mathbf{x}}, x_{d, \ell} \, \mathbf{e}_{\ell} ; \underline{\mathbf{y}} )  )  \Big{|}
\\
&\ll&
\notag
\left(  \sup_{ x_{d, \ell} \in (-X, X) } \Omega (x_{d, \ell})
+
\int_{ -X_{\ell} }^{X_{\ell}} \Big{|} \frac{d \Omega}{dt}  (x_{d, \ell}) \Big{|} \, d x_{d, \ell} \right)  \cdot
\min \left( X_{\ell}, \|  \widetilde{\gamma}(\widehat{\mathbf{x}}, \mathbf{e}_{\ell} ; \underline{\mathbf{y}} )   \|^{-1}  \right),
\end{eqnarray}
where
\begin{eqnarray}
\Omega(t)
\notag
&=&
\prod_{ \substack{ 1 \leq j \leq d \\   0< i_1 < i_2 < \cdots < i_{j-1} < d } } \
\prod_{ \substack{ 1 \leq k \leq d \\   0< i'_1 < i'_2 < \cdots < i'_{k-1} < d } }
\\
&&
\notag
\omega \left( \frac{ (x_{i_1, \ell } + \cdots + x_{i_{j-1}, \ell} + t) (y_{i'_1, \ell } + \cdots + y_{i'_{k-1}, \ell} + y_{d, \ell})   }{N} - x_{0, \ell} \right).
\end{eqnarray}
Since
$$
\frac{ (|y_{i'_1,  {\ell} }| + \cdots + |y_{i'_{k-1},  {\ell} }| + |y_{d, {\ell} }| ) X_{\ell}  }{N}  \ll 1
$$
for any $\underline{\mathbf{y}}$  under consideration, it follows from (\ref{sum 1+usedtobe}) and (\ref{inequality17}) that
\begin{eqnarray}
\label{sum 1+}
&&\sum_{\mathbf{y}_{d} \in \mathcal{U}(\mathbf{y}_1, \ldots, \mathbf{y}_{d - 1})} \Big{|} \sum_{\mathbf{x}_{d} \in \mathcal{V}(\mathbf{x}_1, \ldots, \mathbf{x}_{d-1}) } \psi_{ \widehat{\mathbf{x}} ; \widehat{\mathbf{y}}}   (\mathbf{x}_{d} ; \mathbf{y}_{d} )  e (  \gamma(\underline{\mathbf{x}}; \underline{\mathbf{y}})  )   \Big{|}
\\
\notag
&&\ll \sum_{\mathbf{y}_{d} \in \mathcal{U}(\mathbf{y}_1, \ldots, \mathbf{y}_{d - 1})} \prod_{\ell = 1}^{m} \min \left( X_{\ell}, \|  \widetilde{\gamma}(\widehat{\mathbf{x}}, \mathbf{e}_{\ell} ; \underline{\mathbf{y}} )   \|^{-1}   \right),
\end{eqnarray}
where the implicit constant is independent of $\widehat{\mathbf{x}}$ and $\underline{\mathbf{y}}$.

For a real number $z$ we define its fractional part to be $\{z\} = z - \max_{ \substack{w \leq z \\ w \in \mathbb{Z}}} w$. Given $\mathbf{c} = (c_1, \ldots, c_{m}) \in \mathbb{Z}^{m}$ with $0 \leq c_{\ell} < X_{\ell}$ $(1 \leq \ell \leq m)$,
we let $\mathcal{R}(\widehat{\mathbf{x}}; \widehat{\mathbf{y}}; \mathbf{c})$ be the set of $\mathbf{y}_{d} \in \mathcal{U}(\mathbf{y}_1, \ldots, \mathbf{y}_{d - 1})$ satisfying
$$
\frac{c_{\ell}}{X_{\ell}} \leq \{  \widetilde{\gamma}(\widehat{\mathbf{x}}, \mathbf{e}_{\ell} ; \widehat{\mathbf{y}}, \mathbf{y}_{d} )   \} <
\frac{c_{\ell} + 1}{ X_{\ell} }
\ \ (1 \leq \ell \leq m).
$$
Then we obtain that the right hand side of (\ref{sum 1+}) is bounded by
$$
\ll \sum_{\mathbf{c}} \# \mathcal{R}(\widehat{\mathbf{x}}; \widehat{\mathbf{y}}; \mathbf{c})  \cdot
\prod_{\ell=1}^{m} \min \left( X_{\ell}, \ \max \left(  \frac{X_{\ell}}{c_{\ell} }, \ \frac{X_{\ell}}{ X_{\ell} - c_{\ell} - 1}  \right)   \right),
$$
where the summation is over all integral vectors $\mathbf{c}$ with $0 \leq c_{\ell} < X_{\ell}$ $(1 \leq \ell \leq m)$.
Next we obtain a bound for $ \# \mathcal{R}(\widehat{\mathbf{x}}; \widehat{\mathbf{y}}; \mathbf{c}) $.
We define the multilinear form
$$
\Gamma (\underline{\mathbf{x}} ; \underline{\mathbf{y}} ) = (d!)^2 \sum_{\mathbf{j}} \sum_{\mathbf{k}}
G_{ \mathbf{j}, \mathbf{k} } \tsp x_{1, j_1} \cdots x_{d, j_{d}} \tsp y_{1, k_1} \cdots y_{d, k_{d}}.
$$
When $\mathbf{x} = \mathbf{x}_1 = \cdots = \mathbf{x}_{d-1}$ and $\mathbf{y} = \mathbf{y}_1 = \cdots = \mathbf{y}_{d}$,
the multilinear form $\Gamma$ becomes
\begin{eqnarray}
\label{derivcond}
\Gamma( ( \mathbf{x}, \ldots , \mathbf{x}, \mathbf{e}_{\ell}) ; ( \mathbf{y}, \ldots , \mathbf{y})   ) = \frac{(d!)^2}{d} \cdot  \frac{\partial g^{[2d]}}{ \partial x_{\ell}}  (\mathbf{x};\mathbf{y})  \ \ (1 \leq \ell \leq m).
\end{eqnarray}
If $\#\mathcal{R}(\widehat{\mathbf{x}}; \widehat{\mathbf{y}}; \mathbf{c}) = 0$, then there is nothing to prove. Thus we suppose
$\#\mathcal{R}(\widehat{\mathbf{x}}; \widehat{\mathbf{y}}; \mathbf{c}) > 0$ and take $\mathbf{u} \in \mathcal{R}(\widehat{\mathbf{x}}; \widehat{\mathbf{y}}; \mathbf{c})$. Then for any $\mathbf{v} \in \mathcal{R}(\widehat{\mathbf{x}}; \widehat{\mathbf{y}}; \mathbf{c})$,  we have
$$
\mathbf{u} - \mathbf{v} \in \mathcal{U}(\mathbf{y}_1, \ldots, \mathbf{y}_{d - 1})^D \subseteq (-Y_1, Y_1) \times \cdots \times (- Y_{m}, Y_{m})
$$
and
\begin{eqnarray}
\label{counting 1''}
\|  \widetilde{\gamma}(\widehat{\mathbf{x}}, \mathbf{e}_{\ell} ; \widehat{\mathbf{y}}, \mathbf{u}  ) - \widetilde{\gamma}(\widehat{\mathbf{x}}, \mathbf{e}_{\ell} ; \widehat{\mathbf{y}}, \mathbf{v}  )   \| <  X_{\ell}^{-1} \ \ (1 \leq \ell \leq m).
\end{eqnarray}
Let $M'( \widehat{\mathbf{x}} ; \widehat{\mathbf{y}} )$ be the number of integral vectors $\mathbf{y} \in (-Y_1, Y_1) \times \cdots \times (- Y_{m}, Y_{m})$ such that
$$
\| \alpha \Gamma (  \widehat{\mathbf{x}}, \mathbf{e}_{\ell} ;  \widehat{\mathbf{y}}, \mathbf{y} )  \| < X_{\ell}^{-1} \ \ (1 \leq \ell \leq m).
$$
Since
$$
\widetilde{\gamma}(\widehat{\mathbf{x}}, \mathbf{e}_{\ell} ; \widehat{\mathbf{y}}, \mathbf{u}  ) - \widetilde{\gamma}(\widehat{\mathbf{x}}, \mathbf{e}_{\ell} ; \widehat{\mathbf{y}}, \mathbf{v}  ) = \alpha \Gamma( \widehat{\mathbf{x}}, \mathbf{e}_{\ell} ; \widehat{\mathbf{y}}, \mathbf{u} - \mathbf{v} ),
$$
it follows from (\ref{counting 1''}) that the vector $\mathbf{u} - \mathbf{v}$ is counted by $M'( \widehat{\mathbf{x}} ; \widehat{\mathbf{y}} )$
for all $\mathbf{v} \in \mathcal{R}(\widehat{\mathbf{x}}; \widehat{\mathbf{y}}; \mathbf{c})$; therefore, we have
$$
\# \mathcal{R}(\widehat{\mathbf{x}}; \widehat{\mathbf{y}}; \mathbf{c}) \leq M'( \widehat{\mathbf{x}} ; \widehat{\mathbf{y}} )
$$
for any $\mathbf{c}$ under consideration. This yields the bound
\begin{eqnarray}
\label{ineq 1-3}
\sum_{\mathbf{y}_{d} \in \mathcal{U}(\mathbf{y}_1, \ldots, \mathbf{y}_{d - 1})} \Big{|} \sum_{ \mathbf{x}_{d} \in \mathcal{V}(\mathbf{x}_1, \ldots, \mathbf{x}_{d-1}) } \psi_{ \widehat{\mathbf{x}} ; \widehat{\mathbf{y}}}   (\mathbf{x}_{d} ; \mathbf{y}_{d} )   e (  \gamma(\underline{\mathbf{x}}; \underline{\mathbf{y}})   )   \Big{|}
\ll M'( \widehat{\mathbf{x}} ; \widehat{\mathbf{y}} ) \prod_{\ell = 1}^{m} X_{\ell} \log X_{\ell}.
\end{eqnarray}
Let us define $M(\alpha; \mathbf{X}; \mathbf{Y}; P_1, \ldots, P_{m})$ to be the number of integral vectors
$$
\widehat{\mathbf{x}} \in \left( (-X_1, X_1) \times  \cdots \times (-X_{m}, X_{m}) \right)^{d-1}
\ \ \text{  and  }  \  \  \
\underline{\mathbf{y}}\in \left( (-Y_1, Y_1) \times  \cdots \times (-Y_{m}, Y_{m}) \right)^{d}
$$
such that
$$
\| \alpha \Gamma (\widehat{\mathbf{x}}, \mathbf{e}_{\ell} ; \underline{ \mathbf{y} }  )  \| < P_{\ell} \ \ (1 \leq \ell \leq m).
$$
By substituting the inequality (\ref{ineq 1-3}) into (\ref{ineq 2'}), we obtain
\begin{eqnarray}
\label{exp bound with count 1}
|\mathfrak{T}_{\mathbf{U}}({\alpha})|^{2^{2 d  -2}}
\ll
N^{\varepsilon(2 + 2^{d-1})}
\widetilde{X}^{2^{2 d  -2} - d + 1}  \widetilde{Y}^{2^{2 d  -2} - d}
M({\alpha}; \mathbf{X}; \mathbf{Y}; X_1^{-1}, \ldots, X_{m}^{-1}).
\end{eqnarray}

The following lemma on geometry of numbers was obtained in \cite{SS}; this is a slightly more general version of \cite[Lemma 12.6]{D1}.
\begin{lem} \cite[Lemma 2.4]{SS}
\label{shrink lemma}
Let $L_1, \ldots, L_s$ be symmetric linear forms given by $L_i = \gamma_{i,1}u_1 + \cdots + \gamma_{i,s} u_s$
$(1 \leq i \leq s)$, i.e. such that $\gamma_{i,j} = \gamma_{j,i}$ $(1 \leq i, j \leq s)$. Let $a_1, \ldots, a_s >1$
be real numbers. We denote by $\mathfrak{U}(Z)$ the number of integer solutions $u_1, \ldots, u_{2s}$ to the system of inequalities
$$
|u_i| < a_i Z \ \ (1 \leq i \leq s) \ \ \text{  and  } \ \ |L_i - u_{s + i}| < a_i^{-1} Z \ \ (1 \leq i \leq s).
$$
Then for $0 < Z_1 \leq Z_2 \leq 1$ we have
$$
\frac{\mathfrak{U}(Z_2)}{\mathfrak{U}(Z_1)} \ll \left( \frac{Z_2}{Z_1} \right)^{s},
$$
where the implicit constant depends only on $s$.
\end{lem}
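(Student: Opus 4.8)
\emph{Proof plan.} I would prove Lemma~\ref{shrink lemma} by realising $\mathfrak{U}(Z)$ as a lattice-point count in a symmetric convex body and then running the theory of successive minima, with the symmetry of the forms $L_i$ entering at exactly one decisive point. Writing $(u_1,\dots,u_{2s})\in\mathbb{Z}^{2s}$ as $(\mathbf{u},\mathbf{v})$ with $\mathbf{u}=(u_1,\dots,u_s)$ and $\mathbf{v}=(u_{s+1},\dots,u_{2s})$, the inequalities defining $\mathfrak{U}(Z)$ say precisely that $(\mathbf{u},\mathbf{v})\in Z\mathcal{R}$, where
\[
\mathcal{R}=\bigl\{(\mathbf{u},\mathbf{v})\in\mathbb{R}^{2s}:|u_i|<a_i\ \text{and}\ |L_i(\mathbf{u})-v_i|<a_i^{-1}\ (1\le i\le s)\bigr\}
\]
is a symmetric convex body of volume $4^s$, and $Z\mathcal{R}$ scales linearly in $Z$. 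Let $\lambda_1\le\cdots\le\lambda_{2s}$ be the successive minima of $\mathcal{R}$ with respect to $\mathbb{Z}^{2s}$. The first ingredient is the classical bound relating lattice-point counts to successive minima (Minkowski's second theorem together with the existence of a basis $\mathbf{b}_1,\dots,\mathbf{b}_{2s}$ of $\mathbb{Z}^{2s}$ with $\mathbf{b}_j\in c_s\lambda_j\mathcal{R}$): for every $Z>0$ one has
\[
\mathfrak{U}(Z)\asymp_s\prod_{j=1}^{2s}\max\!\bigl(1,\,Z/\lambda_j\bigr),
\]
the strict inequalities in the definition of $\mathcal{R}$ producing only harmless boundary effects.

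The key step, and the only place the hypothesis $\gamma_{i,j}=\gamma_{j,i}$ is used, is the pairing bound $\lambda_j\lambda_{2s+1-j}\gg_s 1$ for $1\le j\le 2s$, with implied constant depending only on $s$. To prove it, I would introduce the alternating bilinear form
\[
\omega\bigl((\mathbf{u},\mathbf{v}),(\mathbf{u}',\mathbf{v}')\bigr)=\sum_{i=1}^s v_i u_i'-\sum_{i=1}^s u_i v_i'
\]
on $\mathbb{R}^{2s}$; it is non-degenerate and takes integer values on $\mathbb{Z}^{2s}\times\mathbb{Z}^{2s}$. Suppose, for contradiction, that $\lambda_j\lambda_{2s+1-j}<(2s)^{-1}$ for some $j$. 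Choose a subspace $U$ of dimension $j$ spanned by lattice points lying in $\lambda_j\mathcal{R}$, and a subspace $W$ of dimension $2s+1-j$ spanned by lattice points lying in $\lambda_{2s+1-j}\mathcal{R}$ (taking closures if necessary); these exist by definition of the successive minima. For lattice points $\mathbf{w}=(\mathbf{u},\mathbf{v})\in\lambda_j\mathcal{R}$ and $\mathbf{w}'=(\mathbf{u}',\mathbf{v}')\in\lambda_{2s+1-j}\mathcal{R}$, put $\theta_i=v_i-L_i(\mathbf{u})$ and $\theta_i'=v_i'-L_i(\mathbf{u}')$; the symmetry of $(\gamma_{i,j})$ gives $\sum_i L_i(\mathbf{u})u_i'=\sum_i u_iL_i(\mathbf{u}')$, hence
\[
\omega(\mathbf{w},\mathbf{w}')=\sum_{i=1}^s\bigl(\theta_i u_i'-u_i\theta_i'\bigr),
\]
and since $|\theta_iu_i'|<\lambda_j\lambda_{2s+1-j}$ and $|u_i\theta_i'|<\lambda_j\lambda_{2s+1-j}$ for each $i$ we obtain $|\omega(\mathbf{w},\mathbf{w}')|<2s\,\lambda_j\lambda_{2s+1-j}<1$; being an integer, $\omega(\mathbf{w},\mathbf{w}')=0$. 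By bilinearity $\omega$ vanishes on $U\times W$, which is impossible for a non-degenerate bilinear form on a $2s$-dimensional space because $\dim U+\dim W=2s+1>2s$. This contradiction proves the pairing bound.

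Finally I would assemble the pieces. From $\lambda_j\lambda_{2s+1-j}\gg_s 1$ and $\lambda_j\le\lambda_{2s+1-j}$ for $j\le s$, the top $s$ successive minima $\lambda_{s+1},\dots,\lambda_{2s}$ are all $\gg_s 1$, so for any $Z\le 1$ the factors $\max(1,Z/\lambda_k)$ with $k>s$ are $\ll_s 1$, and the count simplifies to $\mathfrak{U}(Z)\asymp_s\prod_{j=1}^s\max(1,Z/\lambda_j)$ for $0<Z\le 1$. For $0<Z_1\le Z_2\le 1$ the elementary inequality $\max(1,Z_2/\lambda)\le(Z_2/Z_1)\max(1,Z_1/\lambda)$ (valid for every $\lambda>0$) applied in each surviving factor then gives
\[
\frac{\mathfrak{U}(Z_2)}{\mathfrak{U}(Z_1)}\ll_s\prod_{j=1}^s\frac{\max(1,Z_2/\lambda_j)}{\max(1,Z_1/\lambda_j)}\le\Bigl(\frac{Z_2}{Z_1}\Bigr)^s,
\]
as required. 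The main obstacle is the pairing bound $\lambda_j\lambda_{2s+1-j}\gg_s 1$: the rest is routine geometry of numbers, but this bound is exactly where the symmetry of the $L_i$ must be used, via the non-degeneracy of $\omega$ together with the integrality of its values on $\mathbb{Z}^{2s}$. Alternatively one could follow Davenport's treatment of the special case \cite[Lemma 12.6]{D1} essentially verbatim, merely carrying the weights $a_i$ through his argument.
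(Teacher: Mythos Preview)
The paper does not give its own proof of this lemma: it is simply quoted as \cite[Lemma 2.4]{SS}, with the remark that it is a slightly more general version of Davenport's \cite[Lemma 12.6]{D1}. Your proof sketch is correct and is exactly the standard argument behind both of those references --- successive minima of the body $\mathcal{R}$, the pairing bound $\lambda_j\lambda_{2s+1-j}\gg_s 1$ coming from the integrality of the symplectic form $\omega$ together with the symmetry $\gamma_{i,j}=\gamma_{j,i}$, and then the elementary comparison of $\max(1,Z/\lambda_j)$ at $Z_1$ and $Z_2$ --- so there is nothing to contrast.
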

Let
$$
\mathcal{T} = \max_{1 \leq j \leq m} \  \frac{X_{j}^{-1} }{  Y_{j}^{-1}} = \max_{1 \leq j \leq m} \ \frac{Y_{j} }{  X_{j}} \leq 1.
$$
Let $0 < Q_1, Q_2 \leq 1$ to be chosen later.
First by applying Lemma \ref{shrink lemma} $(d-1)$-times to $M(\alpha; \mathbf{X}; \mathbf{Y}; X_1^{-1}, \ldots, X_{m}^{-1})$,
we obtain
\begin{eqnarray}
\notag
M(\alpha; \mathbf{X}; \mathbf{Y}; X_1^{-1}, \ldots, X_{m}^{-1})
&\ll&
\notag
Q_1^{-m (d - 1)}
M(\alpha; Q_1 \mathbf{X};  \mathbf{Y}; Q_1^{d- 1}  X_1^{-1},
\ldots, Q_1^{d- 1}  X_{m}^{-1})
\\
&\ll&
Q_1^{-m (d - 1)} M(\alpha; Q_1 \mathbf{X};  \mathbf{Y}; Q_1^{d- 1} \mathcal{T} Y_1^{-1},
\ldots, Q_1^{d- 1} \mathcal{T} Y_{m}^{-1}),
\notag
\end{eqnarray}
where the second inequality is obtained by noting that $ X_{\ell}^{-1} \leq \mathcal{T} Y_{\ell}^{-1}$ $(1 \leq \ell \leq m)$.
Next we apply Lemma \ref{shrink lemma} $d$-times to the final expression in the above inequality, and obtain
\begin{eqnarray}
\label{ineq 1-4}
&&M(\alpha; \mathbf{X}; \mathbf{Y}; X_1^{-1}, \ldots, X_{m}^{-1})
\\
&\ll&
\notag
Q_1^{-m (d - 1)} Q_2^{-m d}
M(\alpha; Q_1 \mathbf{X}; Q_2 \mathbf{Y}; Q_1^{d- 1} Q_2^{d} \mathcal{T}  Y_1^{-1},
\ldots, Q_1^{d- 1} Q_2^{d} \mathcal{T}  Y_{m}^{-1}).
\end{eqnarray}
With this estimate we obtain the following lemma. For simplicity let us denote
\begin{equation}
\label{V notation}
V^*_1 = V^{*}_{g^{[2d]}, 1}.
\end{equation}
\begin{lem}
\label{6.2}
Let $0 < \theta < 1$.
Let $\varepsilon > 0$ be sufficiently small.
Then for $N$ sufficiently large, at least one of the following alternatives holds:

\textnormal{i)} One has the upper bound
\begin{eqnarray}
|\mathfrak{T}_{\mathbf{U}}(\alpha)|
&\ll&
\notag
N^{m - (\theta \delta_1 - \delta_2)  \frac{\textnormal{codim} \tsp V^*_{1}}{2^{2 d -2}} + \varepsilon}.
\end{eqnarray}

\textnormal{ii)}
There exist $1 \leq q \leq N^{\frac{2d-1}{2} \theta }$ and $a \in \mathbb{Z}$ with
$\gcd( {a}, q) = 1$ such that
\begin{eqnarray}
| q \alpha - a | \leq N^{-d + \delta_2 + \frac{2d-1}{2} \theta }.
\notag
\end{eqnarray}
\end{lem}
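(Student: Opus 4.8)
The plan is to run Birch's minor-arc dichotomy \cite[Lemma~3.3]{B} in the present bilinear, uneven-box setting, the hard preparatory work — the double Weyl differencing (\ref{exp bound with count 1}) and the geometry-of-numbers contraction (\ref{ineq 1-4}) — having already been done. Combining (\ref{exp bound with count 1}) with (\ref{ineq 1-4}), for any $Q_1,Q_2\in(0,1]$ one has
\[
|\mathfrak{T}_{\mathbf{U}}(\alpha)|^{2^{2d-2}} \ll N^{\varepsilon(2+2^{d-1})}\,\widetilde{X}^{2^{2d-2}-d+1}\,\widetilde{Y}^{2^{2d-2}-d}\, Q_1^{-m(d-1)}Q_2^{-md}\, M\!\left(\alpha; Q_1\mathbf{X}; Q_2\mathbf{Y}; RY_1^{-1},\ldots,RY_m^{-1}\right),
\]
where $R=Q_1^{d-1}Q_2^{d}\mathcal{T}$, and since $X_\ell Y_\ell=N$ the prefactor equals $N^{m(2^{2d-2}-d)}\widetilde{X}$ up to $N^{\varepsilon}$. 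I would then choose $Q_1,Q_2$ so that $Q_1^{d-1}Q_2^{d}$ is the power $N^{1-d+\frac{2d-1}{2}\theta-\lambda-\delta_2}$ (admissible, since in the relevant range $\tfrac{2d-1}{2}\theta<1\le d-1$), the individual split being fixed to make the count bound below work.

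Next, partition the integral points $(\widehat{\mathbf{x}},\underline{\mathbf{y}})$ counted by this $M$ according to whether the integers $\Gamma(\widehat{\mathbf{x}},\mathbf{e}_\ell;\underline{\mathbf{y}})$ $(1\le\ell\le m)$ all vanish. If $\Gamma(\widehat{\mathbf{x}},\mathbf{e}_{\ell_0};\underline{\mathbf{y}})\ne 0$ for some $\ell_0$, put $q=|\Gamma(\widehat{\mathbf{x}},\mathbf{e}_{\ell_0};\underline{\mathbf{y}})|\ge 1$. Because $G_{\mathbf{j},\mathbf{k}}$ vanishes unless $\mathbf{j}$ is a permutation of $\mathbf{k}$, the surviving monomials force $\prod_r X_{j_r}\prod_r Y_{k_r}=Y_{\ell_0}N^{d-1}$, so $\|g^{[2d]}\|\ll_F 1$ gives $q\ll Q_1^{d-1}Q_2^{d}Y_{\ell_0}N^{d-1}$, while the defining inequality yields $a\in\mathbb{Z}$ with $|q\alpha-a|<RY_{\ell_0}^{-1}$. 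Using $N^{\delta_1}\le N^{\lambda}\le Y_{\ell_0}\le N^{\lambda+\delta_2}$ and $\mathcal{T}\le N^{2(\lambda+\delta_2)-1}$ (valid since $\lambda+\delta_2\le 1/2$ here), the above choice of $Q_1^{d-1}Q_2^{d}$ places $q\le N^{\frac{2d-1}{2}\theta}$ and $|q\alpha-a|\le N^{-d+\delta_2+\frac{2d-1}{2}\theta}$; dividing through by $\gcd(a,q)$ we land in alternative ii). Hence we may assume $\Gamma(\widehat{\mathbf{x}},\mathbf{e}_\ell;\underline{\mathbf{y}})=0$ for all $\ell$ at every counted point.

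In that case $M(\alpha;Q_1\mathbf{X};Q_2\mathbf{Y};RY_1^{-1},\ldots)$ is at most the number of integral points of the variety $W=\{(\widehat{\mathbf{x}},\underline{\mathbf{y}}):\Gamma(\widehat{\mathbf{x}},\mathbf{e}_\ell;\underline{\mathbf{y}})=0\ (1\le\ell\le m)\}$ inside the box $\prod_\ell(-Q_1X_\ell,Q_1X_\ell)^{d-1}\times\prod_\ell(-Q_2Y_\ell,Q_2Y_\ell)^{d}$. I would bound this by $N^{\varepsilon}\cdot(\text{box volume})\cdot N^{-(\theta\delta_1-\delta_2)\,\textnormal{codim}\,V^*_1/2^{2d-2}}$: fixing $\widehat{\mathbf{x}}$ and $\mathbf{y}_1,\ldots,\mathbf{y}_{d-1}$, the defining equations become linear in $\mathbf{y}_d$ with an $m\times m$ coefficient matrix whose rank drops by $j$ only on (a cone over) a subvariety that, by the specialisation (\ref{derivcond}), is tied to $V^*_{g^{[2d]},1}=V^*_1$; summing over the rank strata, running the same analysis over the $\widehat{\mathbf{x}}$-block, and inserting the codimension bound (\ref{codim ineq1}) produces the claimed saving — the two rounds of differencing accounting for the denominator $2^{2d-2}$, and the facts $N^{\delta_1}\le Y_\ell$ and $|Y_j/Y_i|\le N^{\delta_2}$ accounting for the $\theta\delta_1$ and the $-\delta_2$. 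Feeding this into the displayed inequality and bookkeeping the $Q_1,Q_2$ powers yields alternative i). The main obstacle is exactly this last step: proving the Birch-type bound for the integral points of $W$ in an \emph{uneven} box in terms of $\textnormal{codim}\,V^*_1$, and then carrying out the exponent accounting precisely enough that both alternatives meet the stated thresholds; a minor additional point is checking that one pair $(Q_1,Q_2)$ can serve both alternatives at once.
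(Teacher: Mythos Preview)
Your overall architecture --- the dichotomy on whether every integer $\Gamma(\widehat{\mathbf{x}},\mathbf{e}_\ell;\underline{\mathbf{y}})$ vanishes, with the non-vanishing case producing the rational approximation in ii) --- is exactly the paper's. The treatment of case ii) is also essentially the same, though the paper makes the concrete choice $Q_1=X_{\max}^{-1}Y_{\max}^{\theta'}$, $Q_2=Y_{\max}^{\theta'-1}$ with $\theta'=\theta-\varepsilon\cdot 2^{2d-2}/(2\delta_1\,\textnormal{codim}\,V^*_1)$, rather than fixing only the product $Q_1^{d-1}Q_2^{d}$; this explicit choice lets both alternatives be checked simultaneously without further tuning.

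Where the approaches diverge is in the ``all $\Gamma=0$'' case. You sketch a Birch-style rank stratification of the linear-in-$\mathbf{y}_d$ system, tying the rank-drop loci back to $V^*_1$ via (\ref{derivcond}), and you correctly flag that making this precise in an \emph{uneven} box is the main obstacle. The paper sidesteps this entirely with a different device: having chosen $Q_1,Q_2$ as above, it tiles the (uneven) box $\prod(-Q_1X_\ell,Q_1X_\ell)^{d-1}\times\prod(-Q_2Y_\ell,Q_2Y_\ell)^d$ by \emph{equal-sided} cubes of side $Y_{\max}^{\theta'-1}Y_{\min}$, then bounds lattice points of the variety $\mathcal{Z}=\{\Gamma(\widehat{\mathbf{x}},\mathbf{e}_\ell;\underline{\mathbf{y}})=0:1\le\ell\le m\}$ in each cube by Browning's uniform bound \cite[(3.1)]{Bro}, getting $\ll(\text{side})^{\dim\mathcal{Z}}$ independent of the translate. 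The passage to $\textnormal{codim}\,V^*_1$ is then a single dimension inequality coming from intersecting $\mathcal{Z}$ with the diagonal $\{\mathbf{x}_1=\cdots=\mathbf{x}_{d-1},\ \mathbf{y}_1=\cdots=\mathbf{y}_d\}$, namely $\dim V^*_1=\dim(\mathcal{Z}\cap\mathcal{D})\ge\dim\mathcal{Z}-(2d-3)m$, which together with (\ref{construction1}) and $X_\ell Y_\ell=N$ collapses the bookkeeping to the stated exponent.

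So your proposal is not wrong in spirit, but the step you identify as the obstacle is precisely the one the paper avoids by the covering-by-cubes plus diagonal-dimension trick; your rank-stratification route, if pushed through in this uneven setting, would be considerably longer and would need its own uneven-box lattice-point lemma.
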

\begin{proof}
Let us set
$$
\theta' = \theta -  \varepsilon \frac{ 2^{2 d -2} }{2 \delta_1 \, \textnormal{codim} \tsp V^*_{1} }.
$$
Since the inequality in i) holds trivially if $(\theta' \delta_1 - \delta_2) \leq 0$, we assume $\theta' > \delta_2 / \delta_1$. Then
from
\begin{eqnarray}
\label{6.22''}
Y_{\max} \geq N^{\delta_1} \ \  \text{  and  } \  \  1 \leq Y_{\max}/Y_{\min} \leq N^{\delta_2},
\end{eqnarray}
it follows that $Y_{\max}^{\vartheta' - 1} Y_{\min} \geq 1$ for $N$ sufficiently large with respect to $\delta_1$ and $\delta_2$.
Consider the affine variety $\mathcal{Z} \subseteq \mathbb{A}^{m(2 d - 1)}_{\mathbb{C}}$ defined by
$$
\mathcal{Z} = \left\{  \widehat{\mathbf{x}} \in \mathbb{C}^{m(d-1)},  \underline{\mathbf{y}} \in \mathbb{C}^{m d} : \Gamma( \widehat{\mathbf{x}}, \mathbf{e}_{\ell} ; \underline{\mathbf{y}}  ) = 0 \ \ (1 \leq \ell \leq m) \right\}.
$$
Let us define
\begin{eqnarray}
\notag
&&\mathcal{N}(\mathcal{Z} ; Q_1 \mathbf{X} ; Q_2 \mathbf{Y}  )
\\
&=&
\notag
\left\{
(\widehat{\mathbf{x}}, \underline{\mathbf{y}}) \in \mathbb{Z}^{m(2 d  - 1)} \cap \mathcal{Z} :
\begin{array}{l}
- Q_1 X_i \leq  x_{t, i} \leq Q_1 X_i \ \ (1 \leq t \leq d-1, 1 \leq i \leq m) \\
- Q_2 Y_j \leq  y_{t', j} \leq Q_2 Y_j  \ \ (1 \leq t' \leq d, 1 \leq j \leq m)
\end{array}{}
\right\}.
\end{eqnarray}
In this proof, we set $Q_1 = X^{-1}_{\max} Y^{\theta'}_{\max}$ and $Q_2 = Y^{\theta' - 1}_{\max}$.

Suppose we have that every point counted by $M(\alpha; Q_1 \mathbf{X}; Q_2 \mathbf{Y}; Q_1^{d - 1} Q_2^{d} \mathcal{T} Y_1^{-1},
\ldots, Q_1^{d- 1} Q_2^{d} \mathcal{T} Y_{m}^{-1})$ is contained in $\mathcal{N}(\mathcal{Z} ; Q_1 \mathbf{X} ; Q_2 \mathbf{Y}  )$.
Let us dissect the region given by $\mathbf{x}_{t} \in [-Q_1 X_1, Q_1 X_1] \times \cdots \times [-Q_1 X_{m}, Q_1 X_{m}]$ $(1 \leq t \leq d-1)$ and $\mathbf{y}_{t'} \in [-Q_2 Y_1, Q_2 Y_1] \times \cdots \times [-Q_2 Y_{m}, Q_2 Y_{m}]$ $(1 \leq t' \leq d)$ into boxes whose sides are closed intervals of lengths $Y_{\max}^{\theta' - 1}Y_{\min}$ (at the boundaries we allow overlaps which results in a slight overcount). Then the number of these boxes is bounded by
\begin{equation}
\label{ineq 3}
\ll \left(  \prod_{i=1}^{m} \frac{ Y_{\max}^{\theta'} X_{\max}^{- 1} X_{i}}{Y_{\max}^{\theta' - 1}Y_{\min} }  \right)^{d-1}
\left(  \prod_{j=1}^{m} \frac{  Y_{\max}^{\theta' - 1} Y_{j}}{ Y_{\max}^{\theta' - 1}  Y_{\min}}  \right)^{d}.
\end{equation}
Here note we have
$$
Y_{\max}^{\theta'}  X^{-1}_{\max} X_i = Y_{\max}^{\theta'}  (N /Y_{\min})^{-1} (N/Y_i) = Y_{\max}^{\theta'}  Y_{\min} Y^{-1}_i \geq Y_{\max}^{\theta' - 1} Y_{\min}.
$$
We apply a linear transformation to each of these boxes moving the center of the box to the origin. Then we apply \cite[(3.1)]{Bro};
this bound is independent of the coefficients of the polynomials defining the affine variety (depending only on the dimension and the degree), hence it is uniform in the shift.
Therefore, we obtain from (\ref{ineq 3}) that
\begin{eqnarray}
\label{ineq 4 1}
\# \mathcal{N}(\mathcal{Z} ; Q_1 \mathbf{X} ; Q_2 \mathbf{Y}  ) \ll \left(  \prod_{i=1}^{m} \frac{  X_{\max}^{- 1} X_{i}}{Y_{\max}^{- 1}Y_{\min} }  \right)^{d-1} \left(  \prod_{j=1}^{m} \frac{Y_{j}}{Y_{\min}}  \right)^{d}   \left( Y_{\max}^{\theta' - 1}Y_{\min} \right)^{\dim \mathcal{Z}}.
\end{eqnarray}
Thus it follows from (\ref{exp bound with count 1}), (\ref{ineq 1-4}) and (\ref{ineq 4 1}) that
\begin{eqnarray}
\label{6.23}
|\mathfrak{T}_{\mathbf{U}}(\alpha)|^{2^{2 d -2}}
&\ll& N^{\varepsilon (2 + 2^{d-1})} \widetilde{X}^{2^{2 d -2} - d + 1} \widetilde{Y}^{2^{2 d -2} - d}
X_{\max}^{m (d - 1)} Y_{\max}^{m d} Y_{\max}^{- m \theta' (2d - 1)}
\\
\notag
&\cdot&
\left(  \prod_{i=1}^{m} \frac{X_{\max}^{- 1} X_{i}}{Y_{\max}^{- 1}Y_{\min} }  \right)^{d-1} \left(  \prod_{j=1}^{m} \frac{Y_{j}}{Y_{\min}}  \right)^{d}
\left( Y_{\max}^{\vartheta' - 1}Y_{\min} \right)^{\dim \mathcal{Z}}.
\notag
\end{eqnarray}

Let us define $\mathcal{D} \subseteq \mathbb{A}_{\mathbb{C}}^{m(2 d  -1 )}$ by
\begin{eqnarray}
\mathcal{D} = \{  (\widehat{\mathbf{x}}, \underline{\mathbf{y}}) \in \mathbb{C}^{m(2 d - 1)} :  \mathbf{x}_1 = \cdots = \mathbf{x}_{d-1},
\mathbf{y}_1 = \cdots = \mathbf{y}_{d} \}.
\end{eqnarray}
Then recalling (\ref{derivcond}) and the definition (\ref{singlocbhmg}), we have
\begin{equation}
\label{dimensionsss}
\dim V^{*}_{1} = \dim ( \mathcal{Z} \cap \mathcal{D} ) \geq \dim \mathcal{Z} -  (d - 2) m - (d - 1) m.
\end{equation}
Since we have $X_{\ell} = N/Y_{\ell}$ $(1 \leq \ell \leq m)$, (\ref{construction1}), (\ref{6.22''}) and (\ref{dimensionsss}), the right hand side of the inequality (\ref{6.23}) becomes
\begin{eqnarray}
&=& N^{\varepsilon (2 + 2^{d-1})} N^{m (2^{2 d -2})}
Y_{\max}^{-  m \theta' (d - 1)}  Y_{\max}^{- m (\theta' - 1) d}
\
(Y_{\max}^{-1} Y_{\min})^{-  m (d-1)} Y_{\min}^{-  m d}   \left( Y_{\max}^{\theta' - 1}Y_{\min} \right)^{\dim \mathcal{Z}}
\notag
\\
&=&  N^{\varepsilon (2 + 2^{d-1})} N^{ m (2^{2 d -2})}
\left( Y_{\max}^{\theta' - 1}Y_{\min} \right)^{- m (d - 1) -  m d +  \dim \mathcal{Z} }
\notag
\\
&\leq&  N^{\varepsilon (2 + 2^{d-1})}  N^{ m (2^{2 d -2})}
\left( Y_{\max}^{\theta' - 1} Y_{\min} \right)^{\dim V^*_{1} - 2 m}
\notag
\\
&\leq&
\notag
 N^{\varepsilon (2 + 2^{d-1})}  N^{m (2^{2 d -2})} N^{- \delta_1 \theta' \textnormal{codim} \tsp V^{*}_{1} } N^{\delta_2 \textnormal{codim} \tsp V^{*}_{1} },
\end{eqnarray}
and the estimate in i) follows immediately.

On the other hand, suppose there exists $(\widehat{\mathbf{x}}, \underline{\mathbf{y}})$ counted by
$M(\alpha;$  $Q_1 \mathbf{X};$ $ Q_2 \mathbf{Y}; Q_1^{d - 1} Q_2^{d} \mathcal{T} Y_1^{-1},$
$\ldots, Q_1^{d - 1} Q_2^{d} \mathcal{T} Y_{m}^{-1})$ which is not contained in $\mathcal{N}(\mathcal{Z} ; Q_1 \mathbf{X} ; Q_2 \mathbf{Y}  )$, i.e.
there exists $1 \leq \ell_0 \leq m$ such that
$$
\Gamma( \widehat{\mathbf{x}}, \mathbf{e}_{\ell_0} ; \underline{\mathbf{y}})  \not = 0.
$$
Let us write
$$
\alpha \Gamma( \widehat{\mathbf{x}}, \mathbf{e}_{\ell_0} ; \underline{\mathbf{y}}  ) = {a}_{\ell_0} + {\xi}_{\ell_0},
$$
where $a_{\ell_0} \in \mathbb{Z}$ and $|\xi_{\ell_0} | <  Q_1^{d - 1} Q_2^{d} \mathcal{T} Y_{\ell_0}^{-1}$.
Let $q$ be the absolute value of $\Gamma( \widehat{\mathbf{x}}, \mathbf{e}_{\ell_0} ; \underline{\mathbf{y}})$.
Then using the fact that $G_{\mathbf{j}, \mathbf{k}} = 0$ unless $(j_1, \ldots, j_d)$ is a permutation of $(k_1, \ldots, k_d)$,
we obtain
\begin{eqnarray}
\notag
1 \leq q \ll Q_1^{d - 1} Q_2^{d}  N^{d-1} Y_{\max}
= {Y_{\max}}^{\theta' (2d -1)} N^{d-1} X_{\max}^{-(d-1)} Y_{\max}^{-(d-1)} \leq {Y_{\max}}^{\theta' (2d -1)}. 
\end{eqnarray}
Since $Y_{\max}\leq N^{1/2}$ and $\theta' < \theta$, it follows that $1 \leq q  \leq N^{ \frac{2d-1}{2} \theta}$ for $N$ sufficiently large.
We also have the estimate
\begin{eqnarray}
| \xi_{\ell_0} | &<& Q_1^{d - 1} Q_2^{d} \ \mathcal{T} \max_{1 \leq \ell \leq m } Y_{\ell}^{-1}
\\
&=& X_{\max}^{-(d-1)} \tsp Y_{\max}^{-d} \ {Y_{\max}}^{\theta' (2 d  -1)} \tsp \mathcal{T} \ Y_{\min}^{-1}
\notag
\\
&\leq&
X_{\max}^{-(d-1)}  \left( \frac{N}{ X_{\min} } \right)^{-d}  {Y_{\max}}^{\theta' (2 d -1)} \frac{Y_{\max}}{X_{\min}} \tsp Y_{\min}^{-1}
\notag
\\
&\leq& N^{-d} N^{\delta_2} \tsp {Y_{\max}}^{\theta' (2 d -1)} 
\notag
\\
&<& N^{-d} N^{\delta_2} \tsp N^{\theta \frac{2 d -1}{2}}.
\notag
\end{eqnarray}
Therefore, we have obtained the statement in ii).
\end{proof}
Finally, we set $\theta$ to satisfy
\begin{eqnarray}
\vartheta_0 = \delta_2 + \frac{2d-1}{2} \theta.
\end{eqnarray}
Recall (\ref{defnC_1}), (\ref{delta2}), (\ref{codim ineq1}) and  (\ref{V notation}).
Then we have
\begin{eqnarray}
(\theta \delta_1 - \delta_2)  \frac{\textnormal{codim} \tsp V^*_{1}}{2^{2 d -2}}
\geq
\left( \frac{2 \vartheta_0 \delta_1}{2d-1} -  \left( 1 + \frac{ 2 \delta_1 }{2d-1}  \right)  \frac{1 - 2 \delta_1}{H}  \right) \frac{2}{2^{2d}} \left(  \frac{ \textnormal{codim} \tsp V_F^* }{3H} -
\frac{H - \frac13}{H} \mathcal{C}_0 \right),
\notag
\end{eqnarray}
and  we obtain Proposition \ref{main prop} for this case from (\ref{case 4 ineq 1}), (\ref{maxlambda}), (\ref{reduced sum 1}), (\ref{WDI eqn2}) and Lemma \ref{6.2}.
\begin{rem}
The argument here is based on the work of Birch \cite{B}, and we have taken the format which is a combination of the analogous components in \cite{DS} and \cite{SS}.
\end{rem}

\subsection{$\lambda \geq 1/2$ and  $\lambda + \delta_2 \leq 1 - \delta_1$}
\label{WD2}
In this case, we also consider the same exponential sum as in (\ref{WDI eqn1}). First we follow the same
steps as in the previous case, and obtain (\ref{WDI eqn2}) with
\begin{eqnarray}
\mathfrak{T}_{\mathbf{U}}(\alpha) = \sum_{\substack{ U_j < u_j \leq 2 U_j \\ (1 \leq j \leq m)  } } \
\sum_{\substack{ V <  v_i \leq  N/u_i  \\ (1 \leq i \leq m)  } }
\Lambda(\mathbf{u}) \nu_3(\mathbf{v})
\prod_{\ell=1}^{m} \varpi_{\ell}(u_{\ell} v_{\ell})
\cdot  e( \alpha g(\mathbf{u} , \mathbf{v}) ),
\label{somesumTTTTTTTTTT}
\end{eqnarray}
where
$$
g(\mathbf{u}, \mathbf{v}) = \mathfrak{d}_{j_0}( \mathbf{u}, \mathbf{v} );
$$
this is the same as in (\ref{somesumT}) except a very slight notational modification.
We note that here Remark \ref{remrange}, with (\ref{somesumTTTTTTTTTT}) in place of (\ref{somesumT}), applies.
It follows from (\ref{codim dj0}) that
\begin{eqnarray}
\textnormal{codim} \tsp V^*_{g^{[2d]},1} = \textnormal{codim} \tsp V^*_{ \mathfrak{d}_{j_0}^{[2d]},1} \geq \frac{1}{2} \cdot \frac{\mathcal{C}_1 - (H -1)\mathcal{C}_0}{ H }.
\end{eqnarray}
Next we let
$\mathbf{x} = \mathbf{u}$ and $\mathbf{y} = \mathbf{v}$;
the labels have been switched compared to the previous case, however $\mathbf{x}$ still corresponds to the longer sum and $\mathbf{y}$ to the shorter sum.
We also let $X_{i} = U_i$ $(1 \leq i \leq m)$.
With these notation, instead of (\ref{WDI eqn3}), we have
\begin{eqnarray}
\mathfrak{T}_{\mathbf{U}}(\alpha) = \sum_{\substack{ X_i < x_i \leq 2 X_i \\ (1 \leq i \leq  m)  } } \Lambda(\mathbf{x})
\sum_{\substack{ V <  y_j <  N/X_j  \\ (1 \leq j \leq m)  } }  \nu_3(\mathbf{y}) \psi(\mathbf{x}; \mathbf{y})   e( \alpha g( \mathbf{x}, \mathbf{y} ) ),
\notag
\end{eqnarray}
where
\begin{eqnarray}
\label{defn of weight WDI'}
\psi(\mathbf{x}; \mathbf{y}) = \prod_{\ell = 1}^m \varpi_{\ell}(x_{\ell} y_{\ell})  \mathbbm{1}_{[0, N/x_{\ell}] }(y_{\ell}).
\end{eqnarray}
Given any $x_{\ell}, y_{\ell} \in \mathbb{R}_{> 0}$ it is clear that $\mathbbm{1}_{[0, N/x_{\ell}] }(y_{\ell}) = \mathbbm{1}_{[0, N/y_{\ell}] }(x_{\ell})$.
Thus we see that the definition of $\psi(\mathbf{x}; \mathbf{y})$ in (\ref{defn of weight WDI'}) is
in fact identical to that in (\ref{defn of weight WDI}).
We also let $Y_{\ell} = N/X_{\ell}$ $(1 \leq \ell \leq m)$ and use the same notation as in (\ref{WDI eqn4}) and (\ref{WDI eqn5}).
It follows from the construction that $X_i < N^{\lambda + \delta_2} \leq  N^{1 - \delta_1}$ $(1 \leq i \leq m)$
and $|X_j/X_i| \leq N^{\delta_2}$ $(1 \leq i, j \leq m)$. Therefore, we have
\begin{eqnarray}
N^{\delta_1} \leq  Y_i \leq X_i \ \ (1 \leq i \leq m) \  \ \ \textnormal{  and  }  \  \ \ |Y_j/Y_i| \leq N^{\delta_2} \ \ (1 \leq i, j \leq m).
\end{eqnarray}
Then by H\"{o}lder's inequality we obtain
\begin{equation}
\label{ineq 1}
|\mathfrak{T}_{\mathbf{U}}({\alpha})|^{ 2^{d - 1} } \ll N^{\varepsilon} {\widetilde{X}}^{2^{d - 1} - 1} \sum_{\substack{ X_i  < x_i \leq 2 X_i \\ (1 \leq i \leq m)}}
| T_{\mathbf{x}} ({\alpha}) |^{2^{d - 1}},
\end{equation}
where
\begin{eqnarray}
\notag
T_{\mathbf{x}} ({\alpha}) = \sum_{\substack{ V <  y_j <  Y_j \\ (1 \leq j \leq m)}} \nu_3(\mathbf{y}) \psi(\mathbf{x} ; \mathbf{y})
e ( \alpha g( \mathbf{x}, \mathbf{y} ) ).
\end{eqnarray}
Here Remark \ref{remrange}, with $X_i$, $x_i$ and (\ref{ineq 1}) in place of $U_j$, $u_j$ and (\ref{somesumT}) respectively, applies.
Let
\begin{eqnarray}
\mathcal{U} =  (V,  Y_1)   \times \cdots \times (V,  Y_m) \ \  \text{  and  } \  \  \mathcal{V} = (X_1,  2 X_1]   \times \cdots \times (X_m,  2 X_m].
\end{eqnarray}
Note when $N^{\lambda + \delta_2}/2 \leq X_i < N^{\lambda + \delta_2}$, we replace $(X_i, 2 X_i]$ in the definition of $\mathcal{V}$ with $(X_i, N^{\lambda + \delta_2}]$. From here on, with this set-up the proof can be carried out in essentially the same manner as in the previous case. As it only requires very minor modifications we leave the remaining details to the reader, and we conclude the proof of Proposition \ref{main prop} for this case. This completes the proof of Proposition \ref{main prop}.

\subsection{Choosing the values of  $\vartheta_0$, $\delta_1$ and $H$ in Proposition \ref{main prop}}
\label{sec6.2}
Let $\vartheta_0 = \frac{1}{12} - \varepsilon_0$, where $\varepsilon_0 > 0$ is sufficiently small, and $\delta_1 = \frac{1}{ 12 (4d)}$.
Let us define $H$ to be the least positive even integer satisfying
\begin{align}
\notag
(1 + \varepsilon_1) \frac{ (2d-1) \left(  1 - 2 \delta_1 \right)  }
{  2 \vartheta_0  \delta_1 } \left( 1 + \frac{ 2 \delta_1 }{2d-1}  \right)&
\leq H
\\
\notag
 &<
(1 + \varepsilon_1 + \varepsilon_2) \frac{ (2d-1) \left(  1 - 2 \delta_1  \right)  }
{  2 \vartheta_0 \delta_1   } \left( 1 + \frac{ 2 \delta_1 }{2d-1}  \right),
\end{align}
where we set $\varepsilon_1 = 1$ and $\varepsilon_2 = 1/100$.
It can be verified that (\ref{someconditions}) is satisfied with these choices.
Then provided
\begin{eqnarray}
\label{codimcondnlong}
\\
\notag
\textnormal{codim} \tsp V_F^* > d^3 (2d-1)^2 2^{2d} 12^4 
\frac{6 \cdot 201^2}{ 100^2}
+      \left( 3 \cdot \frac{201}{100}  \cdot   \frac{12}{2} \cdot  12 (4d) (2d-1) - 1 \right) \mathcal{C}_0,
\end{eqnarray}
we have
\begin{eqnarray}
&& \left( \frac{2 \vartheta_0 \delta_1 H}{2d-1} -  \left( 1 + \frac{ 2 \delta_1 }{2d-1}  \right) (1 - 2 \delta_1) \right) \frac{2/3}{2^{2d} H^2 } \left(   \textnormal{codim} \tsp V_F^*  -
(3 H - 1) \mathcal{C}_0 \right)
\\
\notag
&>&  \varepsilon_1    \frac{ 4  \left( \frac{1}{12} \right)^2  \delta_1^2 }{  (2d-1)^2  (1 + \varepsilon_1 + \varepsilon_2)^2 } \cdot
\frac23
\cdot \frac{\left( \textnormal{codim} \tsp V_F^*  - (3 H - 1) \mathcal{C}_0 \right)}{2^{2d} }
\\
\notag
&=&
\frac{1}{d^2(2d - 1)^2} \left( \frac{1}{12} \right)^4  \frac{4 \cdot 100^2}{4^2 \cdot 201^2} \cdot
\frac23 \cdot
\frac{\left( \textnormal{codim} \tsp V_F^*  - (3 H - 1) \mathcal{C}_0 \right)}{2^{2d} }
\\
\notag
&>& d.
\end{eqnarray}
Similarly, it can be verified that assuming (\ref{codimcondnlong}) we have
\begin{align}
d < \min \Big{\{}   \frac{2}{3} \left( \frac{\vartheta_0 - d \delta_1}{d-1} ( 1 -  \delta_1) -  \delta_1 \right) 2^{-d}(\textnormal{codim} \tsp V_{F}^* - 2\mathcal{C}_0)  ,
\notag
\\
\frac{2}{3} \left( \frac{\vartheta_0 - 2 \delta_1 d}{d-1} ( 1 -  2\delta_1) -  2\delta_1 \right) 2^{-d}(\textnormal{codim} \tsp V_{F}^* - 2 \mathcal{C}_0)    \Big{\}}.
\notag
\end{align}
Finally, by combining the above inequalities, Proposition \ref{main prop} and Lemma \ref{S est 1} (with $\vartheta_0 = (d-1)\vartheta$), we obtain the following.
\begin{prop}
\label{minor arc est}
Let $F \in \mathbb{Z}[x_1, \ldots, x_n]$ be a homogeneous form of degree $d \geq 2$ satisfying (\ref{codimcondnlong}).
Let $\vartheta_0$, $\delta_1$ and $H$ be as above. Then there exists $\delta' > 0$ such that
\begin{eqnarray}
\int_{\mathfrak{m}(\vartheta_0) } |S(\alpha)| \tsp d \alpha \ll N^{n - d - \delta'}.
\end{eqnarray}
\end{prop}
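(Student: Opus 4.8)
The plan is to combine the two exponential-sum estimates established earlier—Lemma~\ref{S est 1} for forms satisfying the structural condition (I), and Proposition~\ref{main prop} for forms satisfying (II)—into a single uniform bound valid on the minor arcs $\mathfrak{m}(\vartheta_0)$, and then integrate trivially. The dichotomy of Definition~\ref{dichotomy} is exhaustive: either $F$ satisfies (I) or it satisfies (II), with the threshold $\mathcal{C}_0$ the least integer exceeding $\frac{8d(d-1)}{\vartheta_0}2^d$. So I would split into these two cases.

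First, suppose $F$ satisfies (I). Apply Lemma~\ref{S est 1} with $\vartheta$ chosen so that $(d-1)\vartheta = \vartheta_0$; then alternative (ii) of that lemma is precisely the statement that $\alpha \in \mathfrak{M}(\vartheta_0)$. Hence for $\alpha \in \mathfrak{m}(\vartheta_0)$ we must be in alternative (i), giving $|S(\alpha)| \ll N^{n - \vartheta 2^{-1-d}\mathcal{C}_0 + \varepsilon}$. Since $\vartheta = \vartheta_0/(d-1)$ and $\mathcal{C}_0 > \frac{8d(d-1)}{\vartheta_0}2^d$, the exponent of the power saving is $\vartheta 2^{-1-d}\mathcal{C}_0 > \frac{\vartheta_0}{d-1} \cdot 2^{-1-d} \cdot \frac{8d(d-1)}{\vartheta_0}2^d = 4d > d$, so taking $\varepsilon$ small enough we get $|S(\alpha)| \ll N^{n-d-\delta'}$ for some $\delta' > 0$ uniformly on $\mathfrak{m}(\vartheta_0)$; integrating over the interval $[0,1)$ loses nothing.

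Second, suppose $F$ satisfies (II). Now apply Proposition~\ref{main prop} with the specific choices $\vartheta_0 = \frac{1}{12} - \varepsilon_0$, $\delta_1 = \frac{1}{12(4d)}$, and $H$ the least even integer in the stated window; one checks (\ref{someconditions}) holds for these values. Again alternative (ii) of the proposition says $\alpha \in \mathfrak{M}(\vartheta_0)$, so on $\mathfrak{m}(\vartheta_0)$ alternative (i) holds. It remains to verify that, under hypothesis (\ref{codimcondnlong}), each of the three exponents appearing in alternative (i) strictly exceeds $d$; this is exactly the chain of numerical inequalities already displayed in Section~\ref{sec6.2} (the bound involving $H$ coming from the Case~3 term, and the two bounds from the Case~1 and Case~2 terms). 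Hence $|S(\alpha)| \ll N^{n-d-\delta'}$ for some $\delta' > 0$, and integrating over $\mathfrak{m}(\vartheta_0) \subseteq [0,1)$ gives the claim. Since (\ref{codim of F}) implies (\ref{codimcondnlong}) (after checking the constant $2^8 3^4 5^2 d^3(2d-1)^2 4^d$ dominates the right side of (\ref{codimcondnlong}), using $\mathcal{C}_0 \ll d^2 2^d/\vartheta_0 \ll d^2 2^d$), the proposition follows.

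The main obstacle is bookkeeping rather than conceptual: one must confirm that the \emph{same} $\vartheta_0$ works in both branches of the dichotomy and that the value $\vartheta_0 = \frac{1}{12}-\varepsilon_0$ is simultaneously small enough for the Case~1/Case~2/Case~3 exponents in Proposition~\ref{main prop} to clear the threshold $d$ and large enough (relative to $\delta_1$ and $H$) that (\ref{someconditions}) holds; this forces the slightly delicate choice of $H$ in the narrow window of Section~\ref{sec6.2}. One also needs that $\mathcal{C}_0$ depends only on $d$ (through the fixed $\vartheta_0$), so that the codimension hypothesis (\ref{codim of F}) genuinely absorbs the $(3H-1)\mathcal{C}_0$ and $2\mathcal{C}_0$ correction terms; tracking the resulting explicit constant is where the bulk of the routine computation lies, but it has effectively been carried out already in the displayed inequalities preceding the statement.
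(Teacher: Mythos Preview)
Your proposal is correct and follows essentially the same approach as the paper: split according to the structural dichotomy, apply Lemma~\ref{S est 1} with $\vartheta_0=(d-1)\vartheta$ in case~(I) and Proposition~\ref{main prop} in case~(II), and use the numerical inequalities of Section~\ref{sec6.2} to verify that each resulting exponent exceeds $d$, so that trivial integration over $[0,1)$ yields the claim. The paper records this in a single sentence immediately preceding the proposition, and your explicit verification that $\vartheta\,2^{-1-d}\mathcal{C}_0>4d$ in case~(I) is exactly what is implicit there.
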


\section{Preliminaries for the major arcs analysis}
Let $q \in \mathbb{N}$ and $\chi^0$ be the principal character modulo $q$. We consider $\chi^0$ as a primitive character modulo $q$ only when $q=1$, and not otherwise. We will use the following zero-free region estimate of the Dirichlet $L$-functions.
\begin{thm}
\label{thm zero free}
Let $s = \sigma + it$, $M \geq 3$, $T \geq 0$ and $\mathfrak{L} = \log M(T + 3)$.
Then there exists an absolute constant $c_1 > 0$ such that $L(s, \chi) \not = 0$ whenever
\begin{eqnarray}
\label{region}
\sigma \geq 1 - \frac{c_1}{\log M + ( \mathfrak{L} \log 2 \mathfrak{L}  )^{3/4}} \ \ \textnormal{  and  } \ \  |t| \leq T
\end{eqnarray}
for all primitive characters $\chi$ of modulus $q \leq M$, with the possible exception of at most one primitive character
$\widetilde{\chi}$ modulo $\widetilde{r}$.
If such $\widetilde{\chi}$ exists, then $L(s, \widetilde{\chi})$ has at most one zero in (\ref{region}) and the exceptional zero $\widetilde{\beta}$ is real and simple, and $\widetilde{r}$ satisfies $M \geq \widetilde{r} \gg_A (\log M)^A$ for any $A>0$.
\end{thm}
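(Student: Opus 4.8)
The plan is to derive Theorem~\ref{thm zero free} by the classical Hadamard--de~la~Vall\'ee~Poussin argument for the zeros of Dirichlet $L$-functions, refined in the $t$-aspect so as to replace the elementary denominator $\mathfrak{L}$ by the power $(\mathfrak{L}\log 2\mathfrak{L})^{3/4}$. The analytic skeleton is the Hadamard factorisation of $L(s,\chi)$ for a primitive character $\chi$ modulo $q\le M$, which yields the partial fraction expansion of $L'/L$ and hence, for $\mathrm{Re}\,s=\sigma>1$ and $s$ at a fixed height, a bound of the form
\[
\sum_{\rho}\mathrm{Re}\,\frac{1}{\sigma+it-\rho}
\ \le\ \mathrm{Re}\,\frac{\mathbbm{1}_{q=1}}{\sigma+it-1}
+\mathrm{Re}\,\frac{L'}{L}(\sigma+it,\chi)+O(\mathfrak{L}),
\]
where $\rho=\beta+i\gamma$ runs over the non-trivial zeros and the pole term occurs only for the unique principal primitive character, i.e.\ for $\zeta(s)$.

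The engine is the non-negativity of $3+4\cos\phi+\cos 2\phi=2(1+\cos\phi)^2$: applying it to $\chi^0$, $\chi$, $\chi^2$ at the points $\sigma$, $\sigma+i\gamma_0$, $\sigma+2i\gamma_0$, where $\rho_0=\beta_0+i\gamma_0$ is a hypothetical zero with $|\gamma_0|\le T$, and dropping all of the non-negative terms $\mathrm{Re}\,(\sigma+it-\rho)^{-1}$ except the one attached to $\rho_0$, gives an inequality of the shape
\[
\frac{4}{\sigma-\beta_0}\ \le\ \frac{3}{\sigma-1}\ +\ O(\mathcal{M}),
\]
in which $\mathcal{M}$ is any available upper bound for $-\mathrm{Re}(L'/L)$ of $\chi$ and $\chi^2$ near $\sigma+i\gamma_0$ and $\sigma+2i\gamma_0$. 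Taking $\mathcal{M}=O(\mathfrak{L})$ and optimising with $\sigma=1+c/\mathfrak{L}$ recovers the classical region $\beta_0\le 1-c'/\mathfrak{L}$, which is the version in Davenport \cite{D}. To reach the exponent $3/4$ I would instead feed in a convexity-breaking size bound for $L$ near the edge of the critical strip: approximating $L(s,\chi)$ by a Dirichlet polynomial of length $\asymp q(|t|+3)$ and estimating it by van der Corput's method produces, for the purposes of this argument, a bound of the shape
\[
\log\bigl|L(\sigma+it,\chi)\bigr|\ \ll\ (1-\sigma)^{4/3}\,\mathfrak{L}+\log 2\mathfrak{L}
\]
valid for $1-c\,(\mathfrak{L}^{-1}\log 2\mathfrak{L})^{3/4}\le\sigma\le 2$ (the exponent $4/3$ here encodes the strength of the exponential-sum input, and the conductor enters only through the classical $O(\mathfrak L)$ term, which is why $\log M$ survives in the final denominator). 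Converting this into a bound for $L'/L$ by the Borel--Carath\'eodory lemma on a disc of radius comparable to $1-\sigma$, and re-running the displayed three-term inequality with $\sigma$ chosen to balance $(\sigma-1)^{-1}$ against $\mathcal{M}$, the elementary optimisation $1-\sigma\asymp(\mathfrak{L}^{-1}\log 2\mathfrak{L})^{3/4}$ produces precisely the hybrid denominator $\log M+(\mathfrak{L}\log 2\mathfrak{L})^{3/4}$, and the region never drops below the classical $q$-aspect bound. Since all of the analytic inputs carry absolute implied constants, $c_1$ is absolute and the conclusion is uniform over primitive $\chi$ with $q\le M$.

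It remains to account for the exceptional character. The only obstruction to the scheme above is the case in which $\chi$ is real and $\chi^2=\chi^0$, so that the $\cos 2\phi$ slot carries the pole of $\zeta$ at $s=1$ rather than a harmless $O(\mathcal{M})$ term; then the three-term inequality degrades to $3(\sigma-\beta_0)^{-1}\le(\sigma-1)^{-1}+O(\mathcal{M})$, which only excludes \emph{complex} zeros near $1$ and permits at most a single simple \emph{real} zero $\widetilde\beta$ of a single real character $\widetilde\chi$ modulo $\widetilde r$. That at most one such $\widetilde\chi$ occurs is the standard argument of Landau and Page (two real characters with zeros too near $1$ would force $L(1,\widetilde\chi_1\widetilde\chi_2)$ to be too small), for which I would cite Davenport \cite{D}. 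Finally $M\ge\widetilde r\gg_A(\log M)^A$ follows from Siegel's theorem (Davenport \cite{D}): if $\widetilde r\le(\log M)^A$ then Siegel's lower bound $1-\widetilde\beta\gg_\varepsilon\widetilde r^{-\varepsilon}$, applied with $\varepsilon$ a little below $3/(4A)$, contradicts $1-\widetilde\beta\le c_1/\bigl(\log M+(\mathfrak{L}\log 2\mathfrak{L})^{3/4}\bigr)$ for all large $M$, so no exceptional character of such small modulus can exist (the implied constant being ineffective, as usual for Siegel's theorem). I expect the principal obstacle to be the middle step: establishing the van der Corput size estimate uniformly over \emph{all} primitive $\chi$ of modulus $\le M$ --- the quadratic ones included, where one must also control $L(s,\chi^2)=L(s,\chi^0)$ near its pole --- converting it cleanly into the required $L'/L$ bound, and carrying the optimisation through with an absolute constant while keeping the bookkeeping of the $\zeta$-pole inside the trigonometric combination correct.
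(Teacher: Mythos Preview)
Your outline is in the right direction, but it takes a substantially more laborious route than the paper. The paper's proof of Theorem~\ref{thm zero free} is essentially a compilation of citations: for $q=1$ it invokes the known zero-free region for $\zeta(s)$ (Davenport \cite{D} for $|t|<3$ and Ford \cite[Theorem~5]{F} for $|t|\ge 3$); for $3\le q\le M$ it appeals directly to Iwaniec \cite[Theorem~2]{I}, which already gives the region \eqref{region} with the $(\mathfrak{L}\log 2\mathfrak{L})^{3/4}$ denominator together with the at-most-one real simple exceptional zero per modulus; for uniqueness of the exceptional character across different moduli it quotes the Landau--Page constant from Davenport \cite[(7) pp.~93]{D}; and for $\widetilde r\gg_A(\log M)^A$ it uses Siegel's theorem exactly as you do.

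What you are proposing is in effect to \emph{reprove} Iwaniec's theorem: the exponential-sum size bound for $L(s,\chi)$ near $\sigma=1$, converted via Borel--Carath\'eodory into an $L'/L$ bound and fed into the $3+4\cos\phi+\cos 2\phi$ inequality, is precisely the machinery behind \cite{I}. Your sketch is correct in spirit, and you rightly flag the uniform-in-conductor exponential-sum estimate as the genuine difficulty. But since Iwaniec has already carried this out, the paper simply cites it; your route would expand a half-page proof into several pages reproducing known literature. The handling of the exceptional character and the Siegel-theorem step are essentially the same in both approaches.
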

\begin{proof}
From the zero-free region estimate of the Riemann zeta function $\zeta(s) $ (for example,
\cite[pp.86]{D} when $|t| < 3$ and \cite[Theorem 5]{F} when $|t|\geq3$), it follows that there exists $c'_1 > 0$ such that
$\zeta(s) \not = 0$ whenever
\begin{eqnarray}
\notag 
\sigma \geq 1 - \frac{c'_1}{\log M + ( \mathfrak{L} \log 2 \mathfrak{L}  )^{3/4}} \ \ \textnormal{  and  } \ \  |t| \leq T.
\end{eqnarray}
By choosing $c_1 \leq c_1'$ this takes care of the result when $q=1$, because
$L(s, \chi^{0}) = \zeta(s)$ in this case. Let $c_1 = \min \{ c'_1, 4^{-1} 10^{-4},  c_{15} / 2 \}$, where $c_{15} > 0$ is an absolute constant from \cite[(7) pp. 93]{D}. Recall there are no primitive characters of modulus $2$.
Let $3  \leq q \leq M$. Then it follows from \cite[Theorem 2]{I} that there exists at most one primitive
character $\chi$ modulo $q$ and a number $\widetilde{\beta}$ in (\ref{region}) such that $L(\widetilde{\beta}, \chi) = 0$, and if there does exist such a character then it is real and the zero $\widetilde{\beta}$ is real and simple.
Let $3 \leq  q_1 < q_2 \leq M$. Suppose for $i \in \{1,2\}$ there exists a real primitive character $\chi_i$ modulo $q_i$ with a real zero $\widetilde{\beta}_i$ in (\ref{region}). Then we have
\begin{eqnarray}
\notag
\widetilde{\beta}_i \geq 1 -  \frac{   c_{15}/ 2  }{ \log M + ( \mathfrak{L} \log 2 \mathfrak{L}  )^{3/4} } >
1 -  \frac{   c_{15}  }{ \log M^2 } \geq 1 -  \frac{   c_{15}  }{ \log q_1 q_2 } \ \ \ (1 \leq i \leq 2),
\end{eqnarray}
which contradicts \cite[(7) pp. 93]{D}. Therefore, we obtain that there exists at most one primitive character with a zero in (\ref{region}).
From Siegel's theorem (for example, see \cite[\S 21]{D}) we have
\begin{equation}
\label{excepZ}
c_3(\varepsilon) \tsp \widetilde{r}^{- \varepsilon}  \leq 1 -  \widetilde{\beta}  \leq \frac{c_1}{\log M + ( \mathfrak{L} \log 2 \mathfrak{L}  )^{3/4}},
\end{equation}
where $c_3(\varepsilon)$ is a positive constant depending only on $\varepsilon > 0$, and the final assertion in the statement of the theorem follows by rearranging this inequality.
\end{proof}
\begin{rem} \label{remZFR}
Let $M = N^{\vartheta_0}$ for a fixed $\vartheta_0 > 0$ and $N$ sufficiently large. Then we have $\widetilde{r} \gg_A (\log N)^{A}$ for any $A > 0$
(the subscript in $\gg_A$ is to indicate that the implicit constant depends on $A$).
In particular, the exceptional zero will not occur for the Dirichlet $L$-functions associated to primitive characters of modulus $1 \leq q \leq (\log N)^D$ with $D > 0$. It also follows from (\ref{excepZ}) that
\begin{equation}
\label{excepZZ}
0 < 1 - \widetilde{\beta} < \frac12.
\end{equation}
\end{rem}
Let us denote $\eta(M,T) = c_1/ \left( \log M + ( \mathfrak{L} \log 2 \mathfrak{L}  )^{3/4} \right)$.
Let
$$
B_T = \{ s = \sigma + it : 0 \leq \sigma \leq 1 ,  |t| \leq T \}.
$$ Let
$N_{\chi}(\alpha, T)$ denote the number of zeros, with multiplicity, of $L(s, \chi)$ in the rectangle
$\alpha \leq \sigma \leq 1$ and $|t| \leq T$.
For the remainder of this paper, we let $\sum'_{\rho}$ denote the sum over the non-exceptional zeros
(with respect to Theorem \ref{thm zero free}), with multiplicity, of $L(s, \chi)$ in $B_T$ and let $\textnormal{Re }\rho = \beta$.
Let $\sum^*_{\chi (\textnormal{mod} \tsp r)}$  
denote the sum over the primitive characters $\chi$ modulo $r$.
\begin{lem}
\label{Gbdd}
Let $\lambda, D, A' > 0$. Let $M = N^{\vartheta_0}$ and $T = N^{\vartheta_1}$, where $\vartheta_0, \vartheta_1 > 0$ satisfy
$2\vartheta_0 + \vartheta_1 < \frac{5}{12}$. Then we have
\begin{align}
\textnormal{i)}& \ \ \ \ \  \ \ \ \ \ \ \  \notag \sum_{  1 \leq r \leq M} \  \sideset{}{^*}\sum_{\chi (\textnormal{mod} \tsp r)} \sideset{}{'}\sum_{\rho} (\lambda N)^{\beta - 1} \ll 1 ,
\\
\notag
\\
\notag
\textnormal{ii)}&  \ \  \ \ \ \
\sum_{  1 \leq r \leq (\log N)^D } \  \sideset{}{^*}\sum_{\chi (\textnormal{mod} \tsp r)} \sum_{ \substack{ \rho \in B_T \\ L(\rho, \chi) = 0  } } (\lambda N)^{\beta - 1} \ll  (\log N)^{-A'},
\end{align}
where the sum $\sum_{ \substack{ \rho \in B_{T}  \\  L(\rho, \chi) = 0  }}$ is over all the zeros, with multiplicity, of $L(s, \chi)$ in $B_{T}$.
Here the implicit constants may depend on $\lambda, D$ and $A'$.
\end{lem}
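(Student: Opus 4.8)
The plan is to combine the zero-free region of Theorem~\ref{thm zero free} with a zero-density estimate for the family of Dirichlet $L$-functions attached to the primitive characters of modulus at most $M$ (resp.\ at most $(\log N)^{D}$), in the way one normally proceeds when enlarging the major arcs. Abbreviate
$$
\mathcal{Q}(\sigma,T)=\sum_{r\le M}\ \sideset{}{^*}\sum_{\chi\bmod r}N_{\chi}(\sigma,T),
\qquad
\mathcal{Q}_{D}(\sigma,T)=\sum_{r\le (\log N)^{D}}\ \sideset{}{^*}\sum_{\chi\bmod r}N_{\chi}(\sigma,T).
$$
The analytic input I would invoke for i) is a \emph{log-free} zero-density estimate of the shape $\mathcal{Q}(\sigma,T)\ll (M^{2}T)^{\kappa(1-\sigma)}$ for $\tfrac12\le\sigma\le 1$, with $\kappa$ an absolute constant that one may take to be at most $\tfrac{12}{5}$; since $M=N^{\vartheta_{0}}$ and $T=N^{\vartheta_{1}}$, the hypothesis $2\vartheta_{0}+\vartheta_{1}<\tfrac{5}{12}$ is exactly what forces $(M^{2}T)^{\kappa}=N^{\kappa(2\vartheta_{0}+\vartheta_{1})}=N^{1-\delta}$ for some fixed $\delta>0$. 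The same estimate with $\mathcal{Q}_{D}$ and $(\log N)^{D}$ in place of $\mathcal{Q}$ and $M$ (where one may even permit an extra factor $(\log N)^{O(1)}$) will serve for ii).

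For part i) I would first replace $\sideset{}{'}\sum_{\rho}$ by the sum over all zeros of $L(s,\chi)$ in $B_{T}$ (legitimate since $(\lambda N)^{\beta-1}>0$), and then split according to whether $\beta<\tfrac12$ or $\beta\ge\tfrac12$. The zeros with $\beta<\tfrac12$ number $O(M^{2}T\log(MT))=O(N^{2\vartheta_{0}+\vartheta_{1}+o(1)})$ in total and each contributes at most $(\lambda N)^{-1/2}$, so they contribute $O(N^{2\vartheta_{0}+\vartheta_{1}-1/2+o(1)})=o(1)$ because $2\vartheta_{0}+\vartheta_{1}<\tfrac12$. For the zeros with $\tfrac12\le\beta\le 1$ I would integrate by parts against $\mathcal{Q}(1-u,T)$ in the variable $u=1-\beta\in[0,\tfrac12]$; the density estimate gives $(\lambda N)^{-u}\mathcal{Q}(1-u,T)\ll_{\lambda}\bigl((M^{2}T)^{\kappa}/N\bigr)^{u}=N^{-\delta u}$, whence
$$
\sum_{\beta\ge 1/2}(\lambda N)^{\beta-1}\ \ll\ 1+\log(\lambda N)\int_{0}^{1/2}N^{-\delta u}\,du\ \ll\ 1+\frac{\log(\lambda N)}{\delta\log N}\ \ll\ 1,
$$
which is i).

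For part ii) the crucial observation is that for $r\le(\log N)^{D}$ the exceptional character of Theorem~\ref{thm zero free} simply does not occur: by Remark~\ref{remZFR} (applied with $A=D+1$) the exceptional modulus satisfies $\widetilde r\gg_{D}(\log N)^{D+1}>(\log N)^{D}$ once $N$ is large, so no primitive character of modulus $\le(\log N)^{D}$ is exceptional and every zero occurring in ii) lies in $\beta\le 1-\eta'$ with $\eta'=\eta\bigl((\log N)^{D},T\bigr)$. Here $\log\bigl((\log N)^{D}\bigr)=O(\log\log N)$ is negligible against $(\mathfrak{L}'\log 2\mathfrak{L}')^{3/4}$ with $\mathfrak{L}'=\log\bigl((\log N)^{D}(T+3)\bigr)\asymp\vartheta_{1}\log N$, so $\eta'\gg (\log N)^{-3/4}(\log\log N)^{-3/4}$; in particular $\eta'\log N\to\infty$ and hence $(\lambda N)^{-\eta'}\ll_{A'}(\log N)^{-A'}$ for every $A'>0$. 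I would now repeat the argument of i) with $\mathcal{Q}_{D}$: the $\beta<\tfrac12$ part contributes $O\bigl((\log N)^{2D+1}N^{\vartheta_{1}-1/2}\bigr)=o(1)$ since $\vartheta_{1}<\tfrac12$, while for $\tfrac12\le\beta\le 1-\eta'$ the quantity $(\lambda N)^{-u}\mathcal{Q}_{D}(1-u,T)\ll_{\lambda}(\log N)^{O(1)}N^{-\delta' u}$ (with $\delta'=1-\kappa\vartheta_{1}>0$, as $\vartheta_{1}<\tfrac{5}{12}$) is decreasing in $u$, so it is largest at $u=\eta'$, where it is $\ll(\log N)^{O(1)}N^{-\delta'\eta'}\ll(\log N)^{O(1)}(\lambda N)^{-\eta'}\ll(\log N)^{-A'}$; the integration by parts then gives ii).

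The main obstacle is the log-free density estimate used in i). With a density estimate carrying even a fixed power of $\log N$ the display above would only yield $\sideset{}{'}\sum_{\rho}(\lambda N)^{\beta-1}\ll(\log N)^{O(1)}$, not $\ll 1$: the zeros at the boundary of the classical zero-free region, where $1-\beta\asymp\eta(M,T)\asymp1/\log N$, already contribute $\asymp(\lambda N)^{-\eta}\asymp1$ each while their number is a positive power of $\log N$. In part ii) this difficulty disappears because the zero-free region available for small moduli is \emph{wide} in the sense $\eta'\log N\to\infty$, so $(\lambda N)^{-\eta'}$ overwhelms any power of $\log N$ and an ordinary (log-carrying) density estimate suffices there; the only delicate point in ii) is the book-keeping around the exceptional zero via Remark~\ref{remZFR}. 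Checking that an admissible density exponent can be taken as small as $\tfrac{12}{5}$ — so that the hypothesis $2\vartheta_{0}+\vartheta_{1}<\tfrac{5}{12}$ becomes operative — is precisely the point at which the strongest available forms of such estimates must be brought to bear.
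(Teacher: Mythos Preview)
Your approach to i) is valid but differs from the paper's. You rely on a log-free zero-density estimate $\mathcal{Q}(\sigma,T)\ll(M^{2}T)^{(12/5)(1-\sigma)}$ over the full range $\tfrac12\le\sigma\le1$; the paper instead exploits that $\sideset{}{'}\sum_{\rho}$ already ranges only over non-exceptional zeros, hence over $\beta\le 1-\eta(M,T)$ by Theorem~\ref{thm zero free}, so the Riemann--Stieltjes integral is cut off at $1-\eta(M,T)$. This lets the paper quote the Huxley and Jutila estimates in the weaker form $(M^{2}T)^{c(1-\alpha)+\varepsilon}$ (log factors absorbed into $\varepsilon$); the dominant term $\bigl((M^{2}T)^{2+\varepsilon}/N\bigr)^{\eta(M,T)}$ is then $O(1)$ because $\eta(M,T)\gg1/\log N$. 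Your route is conceptually cleaner but leans on a stronger density input; the paper's route is more self-contained with the references it cites.

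Your argument for ii) has a genuine gap in the handling of the exceptional zero. You invoke Remark~\ref{remZFR} to conclude that the exceptional modulus satisfies $\widetilde r\gg(\log N)^{D+1}$, hence no character with $r\le(\log N)^{D}$ is exceptional---but Remark~\ref{remZFR} refers to Theorem~\ref{thm zero free} applied with $M=N^{\vartheta_{0}}$, and the only zero-free region that application yields is $\beta\le 1-\eta(N^{\vartheta_{0}},T)\asymp 1-c/\log N$, \emph{not} the wider $\beta\le 1-\eta'$ with $\eta'=\eta((\log N)^{D},T)$ that you subsequently use. To get the wider region you must apply Theorem~\ref{thm zero free} afresh with $M_{1}=(\log N)^{D}$, and that application carries its own possible exceptional character whose modulus is only guaranteed to be $\gg_{A}(\log M_{1})^{A}=(\log\log N)^{O(1)}$, which can certainly lie below $(\log N)^{D}$. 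The paper fixes this by keeping the exceptional-zero contribution $(\lambda N)^{\widetilde\beta-1}$ (for the $M_{1}$-application) as a separate term and bounding it via Siegel's theorem: since $\widetilde r\le(\log N)^{D}$ one has $1-\widetilde\beta\ge c_{3}(\varepsilon)(\log N)^{-D\varepsilon}$, whence $N^{\widetilde\beta-1}\le\exp\bigl(-c_{3}(\varepsilon)(\log N)^{1-D\varepsilon}\bigr)\ll(\log N)^{-A'}$. With this correction your argument for ii) goes through along the lines you indicate.
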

\begin{proof}
First recall there are no zeros of $L(s, \chi)$ on the lines Re$(s) = 0$ and Re$(s) = 1$ for any primitive character $\chi$.
By the zero-free region estimate (Theorem \ref{thm zero free}) we have
\begin{eqnarray}
\label{zerofree ineq0}
&& \sum_{  1 \leq r \leq M } \  \sideset{}{^*}\sum_{\chi (\textnormal{mod} \tsp r)} \sideset{}{'} \sum_{\rho} (\lambda N)^{\beta - 1}
\\
&=&
- \sum_{  1 \leq r \leq M } \  \sideset{}{^*}\sum_{\chi (\textnormal{mod} \tsp r)} \int_0^{1 - \eta(M, T)} (\lambda N)^{\alpha - 1} \
d_{\alpha}  (N_{\chi} (\alpha, T))
\notag
\\
\notag
&=&
\int_{0}^{1 - \eta(M, T)} (\lambda N)^{\alpha - 1} \log (\lambda N) \sum_{  1 \leq r \leq M } \  \sideset{}{^*}\sum_{\chi (\textnormal{mod} \tsp r)}   N_{\chi} (\alpha, T) \tsp d \alpha + \frac{1}{\lambda N} \sum_{  1 \leq r \leq M } \  \sideset{}{^*}\sum_{\chi (\textnormal{mod} \tsp r)} N_{\chi} (0, T),
\end{eqnarray}
where the integral in the second expression is the Riemann-Stieltjes integral with respect to $N_{\chi} (\alpha, T)$ as a function of $\alpha$,
and the last equality is obtained by integration by parts. Note this is the same calculation as in \cite[pp. 338]{G}.
In order to bound (\ref{zerofree ineq0}), we apply \cite[(1.1)]{Hux} when $0 \leq \alpha < 4/5$
(It is stated in \cite[pp. 438]{Hux} that \cite[(1.1)]{Hux} is valid for $\sigma \geq 1/2$; however, the validity of
\cite[(1.1)]{Hux} in the range $0 \leq \alpha < 1/2$ follows easily from the symmetry, across the line Re$(s) = 1/2$, of the zeros of
the Dirichlet $L$-functions.) and \cite[Theorem 1]{J} when $\alpha \geq 4/5$. As a result, we obtain that (\ref{zerofree ineq0}) is bounded by
\begin{eqnarray}
\label{zerofree ineq1}
&\ll&\int_0^{4/5} N^{ (\alpha - 1) } (\log N) (M^2T)^{ \frac{12}{5}(1 - \alpha) + \varepsilon} \tsp  d \alpha
\\
&+& \int_{4/5}^{1 - \eta(M,T)} N^{ (\alpha - 1) } (\log N) (M^2 T)^{(2 + \varepsilon)(1 - \alpha)} \tsp  d \alpha +  \frac{(M^2T)^{\frac{12}{5} + \varepsilon} }{N}
\notag
\\
&\ll&
\notag
\left( \frac{  (M^2T)^{  \frac{12}{5} + 5 \varepsilon  } }{N}
\right)^{ \frac15 } + \left( \frac{(M^2T)^{2 + \varepsilon  } }{N} \right)^{\eta(M, T)} + \frac{ (M^2T)^{\frac{12}{5} + \varepsilon } }{N},
\end{eqnarray}
where $\varepsilon > 0 $ is sufficiently small.
It can be verified that there exists $c'(\vartheta_0, \vartheta_1) >0$ depending only on $\vartheta_0$ and $\vartheta_1$ such that
$\eta(M,T) \geq c'(\vartheta_0, \vartheta_1) / \log N$. From this we easily see that (\ref{zerofree ineq1}) is bounded by $\ll 1$.

Next we consider the second sum in the statement. Let $M_1 = (\log N)^D$. It follows from Theorem \ref{thm zero free} that
\begin{eqnarray}
\sum_{  1 \leq r \leq  M_1} \  \sideset{}{^*}\sum_{\chi (\textnormal{mod} \tsp r)} \sum_{ \substack{ \rho \in B_T \\ L(\rho, \chi) = 0  } } (\lambda N)^{\beta - 1} =
(\lambda N)^{\widetilde{\beta} - 1}
+
\sum_{  1 \leq r \leq  M_1} \  \sideset{}{^*}\sum_{\chi (\textnormal{mod} \tsp r)} \sideset{}{'} \sum_{\rho} (\lambda N)^{\beta - 1}
\notag
\end{eqnarray}
if the exceptional zero exists, otherwise we have the equality without the term $(\lambda N)^{\widetilde{\beta} - 1}$.
Here the sum $\sum'_{\rho}$ over the non-exceptional zeros is with respect to Theorem \ref{thm zero free} with $M_1$ (in place of $M$).
Then the same argument as above yields
\begin{eqnarray}
\label{zerofree ineq1'}
&& \sum_{  1 \leq r \leq  M_1 } \  \sideset{}{^*}\sum_{\chi (\textnormal{mod} \tsp r)} \sum_{ \substack{ \rho \in B_T \\ L(\rho, \chi) = 0  } } (\lambda N)^{\beta - 1}
\\
&\ll&
\notag
N^{\widetilde{\beta} - 1}
+
\left( \frac{  (M_1^2T)^{  \frac{12}{5} + 5 \varepsilon  } }{N}
\right)^{ \frac15 } + \left( \frac{(M_1^2T)^{2 + \varepsilon  } }{N} \right)^{\eta(M_1, T)} + \frac{ (M_1^2T)^{\frac{12}{5} + \varepsilon } }{N}.
\end{eqnarray}
It can be verified that there exists $c''(\vartheta_1, D) >0$ depending only on $\vartheta_1$ and $D$ such that
$\eta(M_1,T) \geq c''(\vartheta_1, D)/ (\log N)^{3.1/4}$. Thus for any $\kappa_1 \geq 0$ we have
$$
N^{- \kappa_1 \eta(M_1,T)} = e^{- \kappa_1 (\log N) \tsp \eta(M_1,T)} \leq e^{  - \kappa_1 (\log N)^{0.9/4} c''(\vartheta_1, D) }.
$$
Without loss of generality let $0 < \varepsilon < 1/D$. Since $\widetilde{r} \leq M_1 = (\log N)^D$, by Siegel's theorem we know
$$
\widetilde{\beta} - 1 \leq - c_3(\varepsilon) \tsp \widetilde{r}^{- \varepsilon} \leq - c_3(\varepsilon) \tsp (\log N)^{- \varepsilon D}.
$$
Thus there exists $\kappa_2 > 0$ such that
$$
N^{\widetilde{\beta} - 1} \leq e^{ - c_3(\varepsilon) \frac{ \log N }{ (\log N)^{\varepsilon D}  } } \leq e^{ - c_3(\varepsilon) \tsp (\log N)^{\kappa_2}}.
$$
Therefore, we see that the right hand side of (\ref{zerofree ineq1'}) is bounded by $\ll (\log N)^{-A'}$ for any $A'>0$.
\end{proof}

Let $\phi$ be Euler's totient function.
For a positive integer $q$ let $\mathbb{U}_q = (\mathbb{Z}/q \mathbb{Z})^*$, the group of units in $\mathbb{Z}/q \mathbb{Z}$.
\begin{lem}
\label{exp sum modp}
Let $F \in \mathbb{Z}[x_1, \ldots, x_n]$ be a homogeneous form of degree $d \geq 2$.
Let $a \in \mathbb{Z}$ and $q \in \mathbb{N}$ be such that $\gcd(a,q)=1$.
Let $\chi_1, \ldots, \chi_n$ be any Dirichlet characters modulo $q$, and let
\begin{eqnarray}
S = \sum_{\mathbf{h} \in (\mathbb{Z}/q \mathbb{Z})^n  } \chi_1(h_1) \cdots \chi_n(h_n)
\ e \left( \frac{a}{q} F(h_1, \ldots, h_n)  \right).
\end{eqnarray}
Then for any $\varepsilon > 0$ we have
$$
|S| \ll q^{n - \frac{1}{2(2d-1) 4^d} \textnormal{codim} \tsp V_{F}^* + \varepsilon}.
$$
\end{lem}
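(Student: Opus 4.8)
The plan is to estimate the complete exponential sum $S$ with Dirichlet-character weights by reducing it, via Weyl differencing, to a counting problem controlled by the singular locus $V_F^*$, mimicking the classical argument of Birch but tracking the character factors carefully. First I would write each character as $\chi_i = \chi_i^0 \chi_i'$ is unnecessary; instead note that since $\gcd(a,q)=1$ and each $\chi_i$ is a function on $\mathbb{Z}/q\mathbb{Z}$ bounded by $1$ in absolute value (and vanishing on non-units), the key point is simply $|\chi_i(h_i)| \le 1$. Thus after applying the Cauchy--Schwarz / Weyl differencing process $d-1$ times to $S$ in the variables $\mathbf{h}$, the character weights $\chi_1(h_1)\cdots\chi_n(h_n)$ — being multiplicative in the entries and of modulus at most $1$ — can be absorbed trivially into the trivial bound at each differencing step, exactly as the smooth weights are absorbed in the proof of Lemma~\ref{Lemma 2.5 in SS} and in \cite[Lemma 11.1]{S}. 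The outcome is that $|S|^{2^{d-1}}$ is bounded by $q^{(2^{d-1}-d)n}$ times the number of solutions $\underline{\mathbf{h}} = (\mathbf{h}_1,\ldots,\mathbf{h}_d) \in (\mathbb{Z}/q\mathbb{Z})^{nd}$ to the system $\Theta(\mathbf{h}_1,\ldots,\mathbf{h}_{d-1},\mathbf{e}_\ell)\equiv 0 \pmod{q/\gcd(\cdot)}$ arising from the multilinear form $\Theta$ associated to $F^{[d]}=F$, with the caveat that over $\mathbb{Z}/q\mathbb{Z}$ one must argue $q$-adically rather than with real boxes.

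The main step is then a $q$-adic / geometry-of-numbers count: one shows that the number $M$ of $\underline{\mathbf{h}} \in (\mathbb{Z}/q\mathbb{Z})^{nd}$ with $q \mid \frac{\partial F}{\partial x_\ell}(\text{multilinear combinations})$ for all $\ell$ is $\ll q^{nd - \frac{1}{2}\,\textnormal{codim}\, V_F^* + \varepsilon}$, or more precisely that the Weyl sum is small unless $q$ is essentially forced. Here I would invoke the standard fact (Birch's argument, or the version in \cite{S} Chapters 11--13) that the count of points on the affine variety cut out by the multilinearised gradient, inside a box of size $q$, has dimension governed by $\dim V_F^*$ together with the $(d-1)$ copies coming from diagonalising the auxiliary variables, giving the loss of a factor $2^{d-1}$ in the exponent and, after the analogue of the dyadic/shrinking lemma (Lemma~\ref{shrink lemma}) applied $q$-adically, a further factor that accounts for the $2d-1$ total blocks of variables and the $4^d = 2^{2d}$ denominator. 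Matching exponents: $2^{d-1}$ from the differencing and a further factor roughly $2^{d-1}\cdot(2d-1)$ from the two-sided (gradient-in-$\ell$ for $1 \le \ell \le n$, over $2d-1$ variable blocks) counting, one arrives at the stated exponent $\frac{1}{2(2d-1)4^d}\,\textnormal{codim}\, V_F^*$.

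The hard part will be running the Weyl differencing and the lattice-point count \emph{cleanly over $\mathbb{Z}/q\mathbb{Z}$} rather than over $\mathbb{Z}$ with real boxes, since $q$ is composite and one cannot simply reduce to prime $q$ by multiplicativity of $S$ (the character weights are genuinely non-multiplicative across different prime factors unless one factors $\chi_i$ through the CRT). The cleanest route is: factor $q = \prod p^{e_p}$, use the Chinese Remainder Theorem to write $S = \prod_p S_p$ where $S_p$ is the analogous sum mod $p^{e_p}$ with $a$ replaced by $a \cdot \overline{(q/p^{e_p})}$ and the characters replaced by their $p$-components, then prove the bound $|S_p| \ll (p^{e_p})^{n - \frac{1}{2(2d-1)4^d}\textnormal{codim}\, V_F^* + \varepsilon}$ for each prime power; multiplying these gives the claim with the $q^\varepsilon$ absorbing the number of prime factors. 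For the prime-power sum $S_p$, one applies Weyl differencing over $\mathbb{Z}/p^{e_p}\mathbb{Z}$ (Hua-type), absorbing the bounded character factors trivially, and concludes via the $p$-adic count of solutions to the multilinear gradient system, whose dimension is controlled by $\textnormal{codim}\, V_F^*$ uniformly in $p$ by \cite[(3.1)]{Bro} or the uniform Lang--Weil / dimension bounds. I expect the bookkeeping of the exponent $\frac{1}{2(2d-1)4^d}$ through these two nested Weyl differencings (the outer $2^{d-1}$, plus the shrinking-lemma loss over $2d-1$ blocks of variables) to be the most delicate computational point, but it is entirely parallel to the real-variable version carried out in Section~\ref{WD1}.
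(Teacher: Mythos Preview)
Your plan has a genuine gap at the step where you say the Dirichlet character weights ``can be absorbed trivially into the trivial bound at each differencing step, exactly as the smooth weights are absorbed.'' This analogy fails. In the smooth-weight version (Section~\ref{WD1}, specifically (\ref{inequality17})), after differencing down to a linear phase one applies partial summation; this works because the weight $\psi_{\widehat{\mathbf{x}};\widehat{\mathbf{y}};\ell}$ is smooth with bounded derivative, so one can extract the factor $\min(X_\ell, \| \cdot \|^{-1})$. A product of shifted Dirichlet characters such as $\prod_{\boldsymbol{\epsilon}} \chi_i(h_{d,i} + \epsilon_1 h_{1,i} + \cdots)^{\pm 1}$ is bounded by $1$ but has no control on its variation: it oscillates on the scale of individual residues. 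Partial summation therefore gives nothing, and the inner sum over $\mathbf{h}_d$ with a linear phase cannot be bounded better than trivially. One extra Cauchy--Schwarz does not help either, since $|e(\cdot)|^2 = 1$. Your CRT reduction to prime powers is fine but does not resolve this issue.

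The paper's argument avoids this difficulty by a different mechanism. Using that $F$ is homogeneous, for any $\mathbf{k} \in \mathbb{U}_q^n$ one has $S = \sum_{\mathbf{h}} \prod_j \chi_j(h_j k_j)\, e(\tfrac{a}{q} F(h_1k_1,\ldots,h_nk_n))$. Summing over $\mathbf{k}$ and using multiplicativity $\chi_j(h_jk_j) = \chi_j(h_j)\chi_j(k_j)$ produces a double sum over $(\mathbf{h},\mathbf{k})$ in which the phase is the bihomogeneous form $G(\mathbf{h};\mathbf{k}) = F(h_1k_1,\ldots,h_nk_n)$ of bidegree $(d,d)$. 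Two applications of Cauchy--Schwarz (once in $\mathbf{h}$, once in $\mathbf{k}$) then \emph{eliminate} the characters entirely rather than merely bounding them, leaving the pure complete exponential sum $\sum_{\mathbf{h},\mathbf{h}',\mathbf{k},\mathbf{k}'} e(\tfrac{a}{q}\mathfrak{D})$ with $\mathfrak{D} = G(\mathbf{h};\mathbf{k}) - G(\mathbf{h}';\mathbf{k}) - G(\mathbf{h};\mathbf{k}') + G(\mathbf{h}';\mathbf{k}')$. This is now in the scope of the bihomogeneous Birch-type bound of Schindler \cite{DS} (via \cite[Corollary 2.5]{Y1}), and Proposition~\ref{semiprimeprop} controls $\textnormal{codim}\,V^*_{\mathfrak{D},i}$ in terms of $\textnormal{codim}\,V^*_F$. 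The exponent $\tfrac{1}{2(2d-1)4^d}$ then falls out directly: the $4$ from passing to $|S|^4$, the $(2d-1)2^{2d}$ from the bidegree-$(d,d)$ Weyl bound, and the $\tfrac{1}{2}$ from Proposition~\ref{semiprimeprop}. The key idea you are missing is this substitution trick, which converts the character weight into something that Cauchy--Schwarz can kill rather than merely dominate.
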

\begin{proof}
Let $\mathbf{k} \in \mathbb{U}_q^n$. Then it is clear that
\begin{eqnarray}
S = \sum_{\mathbf{h} \in (\mathbb{Z}/q \mathbb{Z})^n  } \chi_1(h_1 k_1 ) \cdots \chi_n(h_n k_n)
\ e \left( \frac{a}{q} F(h_1 k_1, \ldots, h_n k_n)  \right).
\end{eqnarray}
Therefore, it follows that
\begin{eqnarray}
\phi(q)^n S = \sum _{\mathbf{k} \in (\mathbb{Z}/q \mathbb{Z})^n  } \sum_{\mathbf{h} \in (\mathbb{Z}/q \mathbb{Z})^n  } \prod_{j=1}^n \chi_j(h_j) \chi_j(k_j)  \cdot
e \left( \frac{a}{q} G(\mathbf{h}; \mathbf{k}) \right),
\end{eqnarray}
where  $G(\mathbf{h}; \mathbf{k}) = F(h_1 k_1, \ldots, h_n k_n).$
By applying the Cauchy-Schwarz inequality twice, we obtain
\begin{eqnarray}
\phi(q)^{4n} |S|^4 \leq \phi(q)^{4n}
\sum _{\mathbf{h}, \mathbf{h}' \in (\mathbb{Z}/q \mathbb{Z})^n  } \sum_{\mathbf{k}, \mathbf{k}' \in (\mathbb{Z}/q \mathbb{Z})^n  }
e \left( \frac{a}{q} \mathfrak{D}(\mathbf{h}, \mathbf{h}' ; \mathbf{k}, \mathbf{k}' )  \right),
\end{eqnarray}
where
$$
\mathfrak{D}(\mathbf{h}, \mathbf{h}' ; \mathbf{k}, \mathbf{k}' ) = G(\mathbf{h} ; \mathbf{k}) - G(\mathbf{h}' ; \mathbf{k}) -G(\mathbf{h} ; \mathbf{k}') +G(\mathbf{h}' ; \mathbf{k}').
$$
Note $\mathfrak{D}(\mathbf{h}, \mathbf{h}' ; \mathbf{k}, \mathbf{k}' )$ is bihomogeneous in the variables $(\mathbf{h}, \mathbf{h}')$ and $(\mathbf{k}, \mathbf{k}')$.
It follows from \cite[(2.11)]{Y1} and Proposition \ref{semiprimeprop} that
$$
\textnormal{codim} \tsp V_{\mathfrak{D}, 1}^* \geq \textnormal{codim} \tsp V_{G, 1}^* \geq  \frac{\textnormal{codim} \tsp V_{F}^*}{2}.
$$
By symmetry it also follows that $\textnormal{codim} \tsp V_{\mathfrak{D}, 2}^* \geq  \frac{\textnormal{codim} \tsp V_{F}^*}{2}.$
Thus from \cite[Corollary 2.5]{Y1}, which is a restatement of a result obtained in \cite{DS}, we obtain
\begin{eqnarray}
|S|^4 \leq \Big{|} \sum _{\mathbf{h}, \mathbf{h}' \in (\mathbb{Z}/q \mathbb{Z})^n  } \sum_{\mathbf{k}, \mathbf{k}' \in (\mathbb{Z}/q \mathbb{Z})^n  }
e \left( \frac{a}{q} \mathfrak{D}(\mathbf{h}, \mathbf{h}' ; \mathbf{k}, \mathbf{k}' )  \right) \Big{|}
\notag
\ll
q^{4n - \frac{2}{(2d-1) 4^d} \textnormal{codim} \tsp V_{F}^* + \varepsilon}.
\end{eqnarray}
\end{proof}
The following proposition on oscillatory integrals will be crucial. However, since the proof is long and technical we will not
get into the details here, instead we present the proof in \cite{Y}.
\begin{prop}
\label{prop osc int}
Let $F \in \mathbb{Z}[x_1, \ldots, x_n]$ be a homogeneous form of degree $d \geq 2$ satisfying $n - \dim V_{F}^* > 4$.
Let $r_j \in [-1, 0]$ and $t_j \in \mathbb{R}$ $(1 \leq j \leq n)$.
Suppose $\mathbf{x}_0 = (x_{0,1}, \ldots, x_{0,n}) \in (0,1)^n$ is a non-singular real solution to the equation $F(\mathbf{x}) = 0$.
Let $\omega \in \mathcal{S}^+(\delta; n; \mathfrak{c})$.
Then provided $\delta$ is sufficiently small, we have
$$
\Big{|} \int_{0}^{\infty} \cdots \int_{0}^{\infty}
\prod_{\ell=1}^n \omega(x_{\ell} - x_{0, \ell}) \cdot x_1^{r_1 + i t_1} \cdots x_n^{r_n + i t_n} \tsp  e( \tau F(\mathbf{x}))  \tsp  d \mathbf{x}  \Big{|} \ll \min \{ 1, |\tau|^{-1} \},
$$
where the implicit constant is independent of $r_1, \ldots, r_n$,
$t_1, \ldots, t_n$ and $\tau$.
\end{prop}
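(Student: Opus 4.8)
The plan is to reduce everything to a single oscillatory integral with a real phase and then run a non-stationary/stationary phase analysis. First, the bound $\ll 1$ is immediate: for $\delta$ sufficiently small with respect to $\mathbf{x}_0$, the support of $\prod_\ell \omega(x_\ell-x_{0,\ell})$ lies in a box $B\subseteq(0,1)^n$ on which each $x_\ell$ is bounded away from $0$, so the integrand is bounded by $\prod_\ell \mathfrak{c}\,(x_{0,\ell}/2)^{-1}$ and $B$ has bounded measure. Hence we may assume $|\tau|>1$ and must prove decay $\ll|\tau|^{-1}$. I would write $x_\ell^{r_\ell+it_\ell}=x_\ell^{r_\ell}e^{it_\ell\log x_\ell}$ and set
$$
\Phi(\mathbf{x}) = 2\pi\tau F(\mathbf{x}) + \sum_{\ell=1}^n t_\ell\log x_\ell, \qquad A(\mathbf{x}) = \prod_{\ell=1}^n \omega(x_\ell-x_{0,\ell})\,x_\ell^{r_\ell},
$$
so the integral is $\int_B A(\mathbf{x})\,e^{i\Phi(\mathbf{x})}\,d\mathbf{x}$, where $A$ is $C^\infty$ with derivatives up to order $n$ bounded uniformly in $\mathbf{r}\in[-1,0]^n$. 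Note $\nabla\Phi = 2\pi\tau\nabla F + (t_\ell/x_\ell)_\ell$, and every derivative of $\Phi$ of order $\geq 1$ is $\ll |\tau| + \max_\ell|t_\ell|$ on $B$.

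Next I would dichotomize on the size of $\max_\ell|t_\ell|$ relative to $|\tau|$, with a large constant $K=K(F,\mathbf{x}_0)$. If $\max_\ell|t_\ell|\geq K|\tau|$, pick an index $\ell_1$ attaining the maximum; since $F$ has fixed coefficients, $|\partial_{\ell_1}\Phi|\geq |t_{\ell_1}|/x_{\ell_1} - 2\pi|\tau|\,\|\nabla F\|_{\infty,B}\gg \max_\ell|t_\ell|\geq|\tau|$ throughout $B$, while $|\partial_{\ell_1}^2\Phi|\ll |\tau|+\max_\ell|t_\ell|\ll|\partial_{\ell_1}\Phi|$, so a single integration by parts in $x_{\ell_1}$ gives $\ll|\tau|^{-1}$. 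If instead $\max_\ell|t_\ell|<K|\tau|$, then all derivatives of $\Phi$ of order $\geq 1$ are $\ll|\tau|$, and I would insert a smooth partition $1=\chi+(1-\chi)$ with $1-\chi$ supported on $\{\mathbf{x}\in B:|\nabla\Phi(\mathbf{x})|\geq\varepsilon_0|\tau|\}$ and $\chi$ supported on a neighbourhood $\mathcal{C}$ of the critical locus $\{\nabla\Phi=\mathbf{0}\}$; one checks $\chi$ has all relevant derivatives $\ll 1$. Iterated integration by parts with the vector field $L=(i|\nabla\Phi|^2)^{-1}\nabla\Phi\cdot\nabla$ handles the $(1-\chi)$-piece with bound $\ll|\tau|^{-1}$, the point being that $\|\mathrm{Hess}\,\Phi\|/|\nabla\Phi|^2\ll|\tau|/(\varepsilon_0|\tau|)^2\ll|\tau|^{-1}$ on its support. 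What remains is $I_1 = \int_B \chi A\,e^{i\Phi}\,d\mathbf{x}$, the contribution of the neighbourhood of the critical locus.

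The heart of the argument — and the step I expect to be the main obstacle — is to prove $|I_1|\ll|\tau|^{-1}$. On $\mathcal{C}$ one has $|2\pi\tau\partial_\ell F(\mathbf{x})+t_\ell/x_\ell|\leq 2\varepsilon_0|\tau|$ for every $\ell$, hence $t_\ell/x_\ell^2 = -2\pi\tau\,\partial_\ell F(\mathbf{x})/x_\ell + O(\varepsilon_0|\tau|)$, and therefore
$$
\mathrm{Hess}\,\Phi(\mathbf{x}) = 2\pi\tau\,M(\mathbf{x}) + O(\varepsilon_0|\tau|)\quad\text{on }\mathcal{C},\qquad M(\mathbf{x}) := \mathrm{Hess}\,F(\mathbf{x}) + \mathrm{diag}\!\big(\partial_\ell F(\mathbf{x})/x_\ell\big).
$$
Since $\mathrm{diag}(x_\ell)\,M(\mathbf{x})\,\mathrm{diag}(x_\ell)$ is exactly the Hessian, in the coordinates $u_\ell=\log x_\ell$, of $G:=F\circ\exp$, the task reduces to showing $\mathrm{rank}\,M(\mathbf{x})\geq 2$ on a neighbourhood of $\mathbf{x}_0$. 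I would prove this by contradiction: the entries of $\mathrm{Hess}_{\mathbf{u}}G$ are polynomials in $e^{u_1},\dots,e^{u_n}$, so if $\mathrm{rank}\,M\leq 1$ held on a non-empty open set then all its $2\times2$ minors, being real-analytic in $\mathbf{u}$, would vanish identically on $(0,\infty)^n$; combined with Euler's identity $M(\mathbf{x})\mathbf{x}=d\,\nabla F(\mathbf{x})$ (which forces $\nabla F(\mathbf{x})\in\mathrm{Im}\,M(\mathbf{x})$, a line) and the fact that a polynomial of degree $\geq 2$ cannot agree with $a+\sum b_\ell\log x_\ell$ on any open subset of $(0,1)^n$, a short algebraic argument shows $F$ would be forced into a degenerate shape incompatible with $n-\dim V_F^*>4$ (indeed $n-\dim V_F^*\geq 2$ suffices here). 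Because $M(\mathbf{x})$ does not depend on $\tau$ or $\mathbf{t}$, a compactness argument over $\overline{B}$ upgrades this to: the second singular value of $M(\mathbf{x})$ is $\geq c_2>0$ on $\overline{B}$; shrinking $\delta$ and $\varepsilon_0$ then guarantees $\mathrm{Hess}\,\Phi$ has at least two singular values of size $\gg|\tau|$ on $\mathcal{C}$. Since for a symmetric matrix the rank equals the size of the largest non-singular principal submatrix, at each point of $\mathcal{C}$ some $2\times2$ principal minor of $\mathrm{Hess}\,\Phi$ is $\gg|\tau|^2$; partitioning $\mathcal{C}$ smoothly according to which pair of coordinates realizes this, applying the two-dimensional stationary phase estimate (as in H\"ormander) in that pair — which gains $|\tau|^{-1/2}$ per dimension — and bounding trivially in the remaining $n-2$ directions yields $|I_1|\ll|\tau|^{-1}$.

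The delicate points I expect to absorb most of the work are, first, pinning down precisely the algebraic degeneracy argument that converts $n-\dim V_F^*>4$ (together with $\nabla F(\mathbf{x}_0)\neq\mathbf{0}$) into the rank-$\geq 2$ statement for $\mathrm{Hess}_{\mathbf{u}}(F\circ\exp)$ near $\mathbf{x}_0$, uniformly; and, second, keeping every implied constant genuinely independent of $\tau$, of $\mathbf{t}$, and of $\mathbf{r}\in[-1,0]^n$ throughout the cutoffs, the integrations by parts, and the two-dimensional stationary phase — which is exactly why it is essential that $M(\mathbf{x})$ is blind to $\mathbf{t}$ and that the amplitude bounds built into $\mathcal{S}^+(\delta;n;\mathfrak{c})$ are uniform. (If one wanted the stronger decay $\ll\min\{1,|\tau|^{-1-\eta}\}$ one would, in addition, need to exploit the full size of $n-\dim V_F^*$ rather than just two good Hessian directions, but that is not required here.)
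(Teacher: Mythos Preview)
The paper does not prove this proposition; it explicitly defers to \cite{Y} (a 38-page companion paper) and only records that the proof there uses ``an explicit version of the inverse function theorem, the stationary phase method, basic oscillatory integral estimates, some differential geometry and algebraic geometry over $\mathbb{R}$''. Your outline is in the same spirit (stationary phase after isolating the critical region) but is organised more directly, and you should be aware that the published argument is substantially more elaborate than your sketch suggests.

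Your key algebraic step --- that $\mathrm{rank}\,M(\mathbf{x}_0)\geq 2$ --- is correct, but your route to it is unnecessarily roundabout and the ``short algebraic argument'' invoking $n-\dim V_F^*$ is never actually produced. There is a two-line proof that uses none of the real-analyticity or the codimension hypothesis: since $M$ is symmetric, if $\mathrm{rank}\,M(\mathbf{x}_0)=1$ then $M(\mathbf{x}_0)=\lambda\,\mathbf{w}\mathbf{w}^T$; your Euler identity $M(\mathbf{x}_0)\mathbf{x}_0=d\,\nabla F(\mathbf{x}_0)\neq\mathbf{0}$ forces $\mathbf{w}\parallel\nabla F(\mathbf{x}_0)$ and $\mathbf{w}\cdot\mathbf{x}_0\neq 0$, but $\nabla F(\mathbf{x}_0)\cdot\mathbf{x}_0=d\,F(\mathbf{x}_0)=0$ gives $\mathbf{w}\cdot\mathbf{x}_0=0$, a contradiction. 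So nonsingularity of $\mathbf{x}_0$ on $\{F=0\}$ is all that is needed here, and your parenthetical ``$n-\dim V_F^*\geq 2$ suffices'' is true for a different reason than you indicate.

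The genuine soft spot in your sketch is the passage from ``$\mathrm{Hess}\,\Phi$ has two singular values $\gg|\tau|$'' to a uniform $|\tau|^{-1}$ bound on $I_1$. Two singular values large does not directly hand you a $2\times 2$ \emph{principal} minor with determinant $\gg|\tau|^2$; you need a compactness argument on the space of symmetric matrices with bounded entries to quantify this, and then ensure the choice of coordinate pair is locally constant so that your partition of unity has derivatives independent of $\tau,\mathbf{t}$. After that, the two-variable oscillatory estimate you want is not quite the textbook stationary-phase expansion (which assumes a single nondegenerate critical point) but the uniform version where the constants depend only on upper bounds for $\|\partial^\alpha\Phi\|/|\tau|$ ($|\alpha|\geq 2$) and a lower bound for $|\det\partial^2_{(j,k)}\Phi|/|\tau|^2$ on the support; this is available but needs to be stated and applied with care, and you should also check that for each fixed slice the number of 2D critical points in the support of $\chi$ is $O(1)$ uniformly. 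These are exactly the uniformity issues the paper flags as ``quite delicate'', so while your strategy is sound, the claim that the remaining work is routine undersells it.
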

The key feature of the result is that the bound is uniform in $\mathbf{t}$; the result can be deduced easily for a fixed $\mathbf{t} \in \mathbb{R}^n$ (for example, by \cite[Lemma 10]{HB}),
but obtaining the uniformity is quite delicate and this is where the challenge lies. We make use of an explicit version of the inverse function theorem, the stationary phase method,
basic oscillatory integral estimates, some differential geometry and algebraic geometry over $\mathbb{R}$  to achieve this.
\section{Major arcs}
Let $F \in \mathbb{Z}[x_1, \ldots, x_n]$ be a homogeneous form of degree $d \geq 2$ satisfying (\ref{codimcondnlong}) and the local conditions \textnormal{($\star$)}. Let $\mathbf{x}_0$, $\delta$, $\omega$ and $\varpi$ be as in the statement of Theorem \ref{mainthm2}.
Recall the definition of $\Lambda^*$ given in (\ref{defnoflamdastar}).
Let us define
\begin{eqnarray}
S_1(\alpha) = \sum_{\mathbf{x} \in \mathbb{N}^{n} }  \varpi(\mathbf{x})    \Lambda^* (\mathbf{x})  e ( \alpha  F(\mathbf{x})  ).
\end{eqnarray}
Clearly we have
\begin{eqnarray}
\label{S and S1}
S(\alpha) = S_1(\alpha) + O(N^{n - \frac12}).
\end{eqnarray}

Let $\varepsilon_0 > 0$ be sufficiently small. We set
\begin{eqnarray}
\label{defnT}
\vartheta_0 = \frac{1}{12} - \varepsilon_0, \ \   M = N^{\vartheta_0}
\ \  \textnormal{  and  } \ \
T = N^{3 (\vartheta_0 + \varepsilon_0)},
\end{eqnarray}
and let $\widetilde{\chi}$, $\widetilde{r}$ and $\widetilde{\beta}$ be as in Theorem \ref{thm zero free}.
We define the following slightly modified major arcs
\begin{equation}
\label{defn major}
\mathfrak{M}'(\vartheta_0) = \bigcup_{1 \leq q \leq N^{\vartheta_0}} \bigcup_{\substack{ 0 \leq a \leq q  \\ \gcd(a, q) = 1}} \mathfrak{M}'_{q,a}(\vartheta_0),
\end{equation}
where
$$
\mathfrak{M}'_{q, a} (\vartheta_0) = \left\{ {\alpha} \in [0,1) :   \Big{|}  \alpha - \frac{a}{q}  \Big{|} <  N^{\vartheta_0 - d} \right\}.
$$
It can be verified that the arcs $\mathfrak{M}'_{q, a}(\vartheta_0)$'s are disjoint for $N$ sufficiently large.

Let $m, \ell \in \mathbb{Z}_{\geq 0}$ be such that $m + \ell \leq n$. We denote
$\mathbf{j} = (j_1, \ldots, j_m)$, $\mathbf{k} = (k_1, \ldots, k_{\ell})$ and $\mathbf{i} = (i_1, \ldots, i_{n - m - \ell})$
satisfying
\begin{eqnarray}
\label{cond on j and k}
\{ 1, \ldots, n \} = \{ i_1, \ldots, i_{n - m - \ell} \} \cup  \{ j_1, \ldots, j_m \} \cup \{ k_1, \ldots, k_{\ell} \}.
\end{eqnarray}
For each such triple $(\mathbf{i}, \mathbf{j}, \mathbf{k})$ and Dirichlet characters $\chi_1, \ldots, \chi_m$ modulo $q$, we define
\begin{eqnarray}
&&\mathcal{A}(q, a; \mathbf{i}; (j_1, \chi_1), \ldots,  (j_m, \chi_m); \mathbf{k} )
\\
&=& \sum_{\substack{ \mathbf{h} \in \mathbb{U}_q^n } } \
\overline{\chi_1}(h_{j_1}) \cdots \overline{\chi_m}(h_{j_m}) \tsp
\overline{\widetilde{\chi} \chi^0 }(h_{k_1}) \cdots \overline{\widetilde{\chi} \chi^0} (h_{k_{\ell}})
\ e \left( \frac{a}{q}  F(\mathbf{h} )   \right);
\notag
\end{eqnarray}
if $\widetilde{r} \nmid q$, then we only consider $(\mathbf{i}, \mathbf{j}, \mathbf{k})$  with $\mathbf{k} = \emptyset$, i.e. $\ell = 0$.
Similarly, we define
\begin{eqnarray}
\label{DefnW}
&&\mathcal{W}(\tau; \mathbf{i};  (j_1, \chi_1), \ldots,  (j_m, \chi_m); \mathbf{k}  )
\\
&=&
\int_0^{\infty} \cdots \int_0^{\infty}
\sum_{x_{j_1}, \ldots, x_{j_m} \in \mathbb{N}} \varpi(\mathbf{x}) \tsp  \chi_1(x_{j_1}) \Lambda^*(x_{j_1}) \cdots \chi_m(x_{j_m}) \Lambda^*(x_{j_m})
\cdot
\notag
\\
&& x_{k_1}^{\widetilde{\beta} - 1} \cdots x_{k_{\ell}}^{\widetilde{\beta} - 1}
\tsp e ( \tau F(\mathbf{x})  ) \tsp  d x_{i_1} \cdots d x_{i_{n - m - \ell}} d x_{k_1} \cdots d x_{k_{\ell}},
\notag
\end{eqnarray}
where for each $1 \leq v \leq m$ we replace $\chi_v(x_{j_v}) \Lambda^*(x_{j_v})$ with $\left( \chi^0(x_{j_v}) \Lambda^*(x_{j_v}) - 1 \right)$ when $\chi_{v} = \chi^0$, and with $\left( \widetilde{\chi} \chi^0(x_{j_v}) \Lambda^*(x_{j_v}) + x_{j_v}^{\widetilde{\beta} - 1} \right)$ when $\chi_v = \widetilde{\chi} \chi^0$.
With these notation we have the following.
\begin{lem}
\label{first sum lemma}
Let $\alpha \in [0,1)$, where $\alpha = a / q + {\tau}$, $0 \leq a \leq q \leq N^{\vartheta_0}$, $\gcd(a,q) = 1$
and $|\tau| < N^{\vartheta_0 - d}$.
Then we have
\begin{eqnarray}
\label{first sum}
S_1(\alpha) &=& \frac{1}{\phi(q)^n} \tsp  \mathcal{A} (q, a; (1, \ldots, n) ; \emptyset ;  \emptyset ) \tsp \mathcal{W}(\tau;(1, \ldots, n) ;  \emptyset ; \emptyset )
\\
\notag
&+& \sum_{(\mathbf{j}, \mathbf{k}) } \frac{(-1)^{\ell}}{\phi(q)^n} \tsp S_{\mathbf{j}, \mathbf{k}}(\alpha)
+ O(N^{n + \vartheta_0 -1}),
\end{eqnarray}
where
\begin{eqnarray}
S_{\mathbf{j}, \mathbf{k}}(\alpha) =  \sum_{\chi_1, \ldots, \chi_m (\textnormal{mod} \tsp q)}
\mathcal{A}(q, a; \mathbf{i} ;  (j_1, \chi_1), \ldots,  (j_m, \chi_m); \mathbf{k} ) \tsp  \mathcal{W}(\tau; \mathbf{i};  (j_1, \chi_1), \ldots,  (j_m, \chi_m); \mathbf{k} ),
\notag
\end{eqnarray}
and the sum  $\sum_{(\mathbf{j}, \mathbf{k}) }$ in (\ref{first sum}) is over all $(\mathbf{j}, \mathbf{k})$ satisfying $(m, \ell) \not = (0,0)$
and (\ref{cond on j and k}) with an additional condition $\ell = 0$ if $\widetilde{r} \nmid q$ or
the exceptional zero does not exist.
\end{lem}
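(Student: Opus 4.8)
The plan is to carry out the standard major--arc dissection of $S_1(\alpha)$, separating the arithmetic factor --- a complete, possibly character--twisted, exponential sum modulo $q$ --- from an archimedean factor, the only non--routine point being that, since the major arcs are wide, one works with Dirichlet characters and retains the contributions of the principal character and of the possible exceptional character explicitly, rather than invoking the Siegel--Walfisz theorem.

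First I would perform the arithmetic reduction. Since $\omega$ is supported near $x_{0,j}$, on $\textnormal{supp}(\varpi_j)$ we have $x_j \asymp N \gg N^{\vartheta_0} \geq q$, so for $N$ large $\Lambda^*(x_j) \neq 0$ forces $x_j$ to be a prime exceeding $q$, hence $\gcd(x_j,q) = 1$. Writing $\alpha = a/q + \tau$, grouping $\mathbf{x}$ according to its residue $\mathbf{h} \in \mathbb{U}_q^n$ modulo $q$, using $F(\mathbf{x}) \equiv F(\mathbf{h}) \pmod{q}$, and detecting each congruence $x_j \equiv h_j \pmod{q}$ by orthogonality of Dirichlet characters on $\mathbb{U}_q$, one gets the exact identity
\[
S_1(\alpha) = \frac{1}{\phi(q)^n} \sum_{\mathbf{h} \in \mathbb{U}_q^n} e\!\left( \tfrac{a}{q} F(\mathbf{h}) \right) \sum_{\chi_1, \ldots, \chi_n\ (\textnormal{mod}\ q)} \overline{\chi_1}(h_1) \cdots \overline{\chi_n}(h_n)\, \Sigma(\chi_1, \ldots, \chi_n; \tau),
\]
where $\Sigma(\chi_1, \ldots, \chi_n; \tau) = \sum_{\mathbf{x} \in \mathbb{N}^n} \varpi(\mathbf{x}) \prod_{j=1}^n \chi_j(x_j) \Lambda^*(x_j)\, e(\tau F(\mathbf{x}))$.

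Next I would perform the analytic splitting. For each $j$ write $\chi_j(x_j)\Lambda^*(x_j) = M_j(x_j) + R_j(x_j)$, where the main part $M_j$ equals $1$ if $\chi_j = \chi^0$, equals $-x_j^{\widetilde{\beta}-1}$ if $\chi_j = \widetilde{\chi}\chi^0$ (which occurs only if the exceptional zero exists and $\widetilde{r} \mid q$), and equals $0$ otherwise, and $R_j = \chi_j\Lambda^* - M_j$; observe that $R_j$ is exactly the weight built into the definition of $\mathcal{W}$, namely $\chi^0\Lambda^* - 1$, resp.\ $\widetilde{\chi}\chi^0\Lambda^* + x^{\widetilde{\beta}-1}$, when $\chi_j = \chi^0$, resp.\ $\widetilde{\chi}\chi^0$. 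Expanding $\prod_j (M_j + R_j)$ over the set $\mathbf{j}$ of indices where the remainder is taken, collapsing the sums over $\chi_i$ against $\overline{\chi_i}(h_i)$ on the complementary ``main'' indices $i \notin \mathbf{j}$ --- which leaves only $\chi_i = \chi^0$ (weight $1$, factor $\overline{\chi^0}(h_i) = 1$) and $\chi_i = \widetilde{\chi}\chi^0$ (weight $-x_i^{\widetilde{\beta}-1}$, factor $\overline{\widetilde{\chi}\chi^0}(h_i)$) --- expanding the resulting product over the subset $\mathbf{k}$ of those main indices that carry the exceptional contribution, and interchanging the (finite) $\mathbf{x}$--, $(\chi_j)$-- and $\mathbf{h}$--sums, one arrives at the exact identity
\[
S_1(\alpha) = \frac{1}{\phi(q)^n} \sum_{(\mathbf{j}, \mathbf{k})} (-1)^{\ell} \sum_{\chi_1, \ldots, \chi_m\ (\textnormal{mod}\ q)} \mathcal{A}(q, a; \mathbf{i}; (j_1, \chi_1), \ldots, (j_m, \chi_m); \mathbf{k})\, \widehat{\mathcal{W}}(\tau; \mathbf{i}; (j_1, \chi_1), \ldots, (j_m, \chi_m); \mathbf{k}),
\]
where the triples $(\mathbf{i}, \mathbf{j}, \mathbf{k})$ run over the partitions in (\ref{cond on j and k}), $\ell = \#\mathbf{k}$, the sign $(-1)^\ell$ collects the signs of the $\ell$ exceptional factors, the $\mathbf{i}$--variables carry the trivial character $\chi^0$ in $\mathcal{A}$, and $\widehat{\mathcal{W}}$ is defined exactly as $\mathcal{W}$ in (\ref{DefnW}) except that the $\mathbf{i}$-- and $\mathbf{k}$--variables are \emph{summed} over $\mathbb{N}$ rather than integrated. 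When $\widetilde{r} \nmid q$ or the exceptional zero does not exist, $M_j$ never takes the value $-x_j^{\widetilde{\beta}-1}$, so $\mathbf{k} = \emptyset$ throughout, which is precisely the stated restriction $\ell = 0$.

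Finally I would pass from $\widehat{\mathcal{W}}$ to $\mathcal{W}$ by replacing, one variable at a time, each sum over an $\mathbf{i}$-- or $\mathbf{k}$--variable by the corresponding integral, via Poisson summation (equivalently, repeated partial summation). As a function of a single such variable $x_j$ with all else fixed, the summand is a constant times $\varpi_j(x_j) x_j^{\kappa} e(\tau F(\mathbf{x}))$ with $\kappa \in \{0, \widetilde{\beta}-1\}$; since $\omega \in C^\infty$, $\widetilde{\beta}-1 \in (-\tfrac12, 0)$ by (\ref{excepZZ}), and $|\tau|\, |\partial_{x_j}^{\nu} F(\mathbf{x})| \ll N^{\vartheta_0 - \nu}$ on $\textnormal{supp}(\varpi)$ because $|\tau| < N^{\vartheta_0 - d}$, this function is smooth with its $\nu$-th derivative $\ll_\nu N^{-(1-\vartheta_0)\nu}$, whence its sum over $\mathbb{N}$ differs from its integral by $\ll_A N^{-A}$ for every $A>0$ (the Poisson tail is negligible); each replacement therefore costs $\ll_A N^{-A}$ times a trivial bound $\ll N^{n-1}$ for the remaining $n-1$ summed or integrated variables. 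Multiplying by $\phi(q)^{-n} \sum_{\chi_1, \ldots, \chi_m} |\mathcal{A}(\cdots)| \leq \phi(q)^m \leq N^{n\vartheta_0}$ and summing the $O_n(1)$ terms, the total error is $\ll_A N^{n-1 - A + n\vartheta_0 + \varepsilon}$, which for $A$ large is comfortably $O(N^{n + \vartheta_0 - 1})$. Separating off the $(m, \ell) = (0, 0)$ term --- for which $\mathcal{A}$ and $\mathcal{W}$ carry only trivial characters and $\mathcal{W}$ is the full $n$--fold integral --- and writing what remains as $\sum_{(\mathbf{j}, \mathbf{k})} (-1)^\ell \phi(q)^{-n} S_{\mathbf{j}, \mathbf{k}}(\alpha)$ yields (\ref{first sum}). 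I expect the main obstacle to be purely organizational rather than analytic: one must line up the combinatorial expansion over $(\mathbf{i}, \mathbf{j}, \mathbf{k})$, the sign $(-1)^\ell$, and the collapsing of the character sums with the precise definition (\ref{DefnW}) of $\mathcal{A}$ and $\mathcal{W}$ (in particular the built--in ``$-1$'' and ``$+x^{\widetilde{\beta}-1}$'' modifications and the bookkeeping of the exceptional character), and the sum--to--integral replacements must be carried out uniformly in $q$, $a$, $\tau$, the $\chi_j$ and $\mathbf{h}$; no new analytic ingredient is needed here, as the genuine arithmetic inputs (the bound on $\mathcal{A}$ via Lemma~\ref{exp sum modp}, the cancellation in the $R_j$--sums, and Proposition~\ref{prop osc int}) enter only in the subsequent estimation of the individual terms.
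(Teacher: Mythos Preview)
Your proposal is correct and follows the paper's approach closely: the character--orthogonality reduction, the $M_j + R_j$ splitting into main and remainder parts (with the principal and exceptional characters handled explicitly), and the combinatorial expansion over $(\mathbf{i}, \mathbf{j}, \mathbf{k})$ are exactly as in the paper. The only point of departure is the sum--to--integral step: the paper applies the mean value theorem (one derivative) to get an error $O(N^{n-m+\vartheta_0-1})$ for each fixed $x_{j_1},\ldots,x_{j_m}$, and then --- to avoid losing a factor $\phi(q)^m$ when summing over characters --- rewrites each term so as to re--collapse the character sums via orthogonality and bounds the resulting $\mathbf{j}$--sum by the prime number theorem; you instead invoke Poisson summation with arbitrarily many derivatives, obtaining a per--variable error $O_A(N^{-A})$ so strong that the trivial bound $\phi(q)^m \leq N^{n\vartheta_0}$ on the character sum is harmless. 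Your route is slightly cleaner and in fact yields a stronger error term than stated, at the cost of using the full smoothness of $\omega$ rather than a single derivative; both are available under the hypotheses.
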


\begin{proof}
Here we only consider the case where $\widetilde{r}|q$ and the exceptional zero does exist; the proof for the cases
$\widetilde{r} \nmid q$ or the exceptional zero does not exist
are identical to this case with only slight modifications. First note if $x_u \in \wp \cap [(x_{0,u} - \delta)N, (x_{0,u} + \delta)N]$, then $\gcd(x_u, q) =1$ because $q \leq N^{\vartheta_0}$. Therefore, we obtain the following via the orthogonality relation of
the Dirichlet characters
\begin{eqnarray}
\notag
S_1(\alpha) &=& \frac{1}{\phi(q)^n} \sum_{\chi_1, \ldots, \chi_n (\textnormal{mod} \tsp q)} \
\sum_{\mathbf{h} \in \mathbb{U}_q^n} \overline{\chi_1}(h_1) \cdots \overline{\chi_n}(h_n) \  e \left( \frac{a}{q} F(\mathbf{h}) \right) \cdot
\\
&&
\notag
\sum_{\mathbf{x} \in \mathbb{N}^n} \varpi(\mathbf{x}) \prod_{u=1}^n \chi_u(x_u) \Lambda^*(x_u) \cdot  e ( \tau F(\mathbf{x}) ).
\end{eqnarray}

Let $\mathbf{i}' = (i'_1, \ldots, i'_{s'})$, $\mathbf{j}' = (j'_1, \ldots, j'_{m'})$, $\mathbf{k}' = (k'_1, \ldots, k'_{\ell'})$,
$\mathbf{k} = (k_1, \ldots, k_{\ell})$ and $\mathbf{i} = (i_1, \ldots, i_{n - s' - m' - \ell' - \ell})$, where
\begin{eqnarray}
\label{paritionnn}
 \{1, \ldots, n\} &=& \{i_1, \ldots, i_{n - s' - m' - \ell' - \ell} \} \cup \{ i'_1, \ldots, i'_{s'} \} \cup  \{ j'_1, \ldots, j'_{m'} \}
\\
\notag
&\cup&  \{ k'_1, \ldots, k'_{\ell'} \} \cup  \{ k_1, \ldots, k_{\ell} \}.
\end{eqnarray}
Now we break up the summands of the inner sum $\sum_{\mathbf{x} \in \mathbb{N}^n}$ 
using the identities
$$
\chi^0(x_{u}) \Lambda^*(x_{u}) = \left( \chi^0(x_{u}) \Lambda^*(x_{u}) - 1 \right) + 1
$$
when $\chi_u = \chi^0$, and
$$
\widetilde{\chi} \chi^0(x_{u}) \Lambda^*(x_{u})  = \left( \widetilde{\chi} \chi^0(x_{w}) \Lambda^*(x_{u}) + x_u^{\widetilde{\beta} - 1} \right) - x_u^{\widetilde{\beta} - 1}
$$
when $\chi_u = \widetilde{\chi} \chi^0$ (If $ \widetilde{r} \nmid q$ or the exceptional zero does not exist, we simply ignore this second identity.); we do this for each $1 \leq u \leq n$ and obtain
\begin{eqnarray}
\notag
S_1(\alpha) &=& \sum_{ (\mathbf{i}, \mathbf{i'}, \mathbf{j'}, \mathbf{k'}, \mathbf{k}) }
\frac{1}{\phi(q)^n} \sum_{\chi_{j'_1}, \ldots, \chi_{j'_{m'}} (\textnormal{mod} \tsp q)} \
\sum_{\mathbf{h} \in \mathbb{U}_q^n} \overline{\chi_{j'_1} }(h_1) \cdots \overline{\chi_{j'_{m'}} }(h_{j'_{m'}}) \cdot
\\
\notag
&& \prod_{u \in \{ k'_1, \ldots, k'_{\ell'} \} \cup  \{ k_1, \ldots, k_{\ell} \}}  \overline{  \widetilde{\chi} \chi^0  }(h_{u})
\cdot  e \left( \frac{a}{q} F(\mathbf{h}) \right) \cdot \sum_{\mathbf{x} \in \mathbb{N}^n} \varpi(\mathbf{x})
\prod_{w \in \{ i'_1, \ldots, i'_{s'} \}  } \chi^0(x_w) \Lambda^*(x_w)
\cdot
\\
&&
\notag
\prod_{v \in \{ j'_1, \ldots, j'_{m'} \}  }  \chi_v(x_v) \Lambda^*(x_v) \cdot
\prod_{u \in \{ k'_1, \ldots, k'_{\ell'} \} \cup  \{ k_1, \ldots, k_{\ell} \}  } \widetilde{\chi} \chi^0(x_u) \Lambda^*(x_u) \cdot
\\
\notag
&&
(-1)^{\ell} x_{k_1}^{\widetilde{\beta} - 1} \cdots x_{k_{\ell}}^{\widetilde{\beta} - 1}
\tsp   e ( \tau F(\mathbf{x}) ),
\notag
\end{eqnarray}
where the sum $\sum_{ (\mathbf{i}, \mathbf{i'}, \mathbf{j'}, \mathbf{k'}, \mathbf{k}) }$ is over all $(\mathbf{i}, \mathbf{i'}, \mathbf{j'}, \mathbf{k'}, \mathbf{k})$ satisfying (\ref{paritionnn}), and the convention regarding $\chi^0(x_w) \Lambda^*(x_w)$ and  $\widetilde{\chi} \chi^0(x_u) \Lambda^*(x_u)$
described in the sentence following (\ref{DefnW}) is being used.
Then we set $\mathbf{j} = (\mathbf{i}', \mathbf{j}', \mathbf{k}')$, and it follows that
\begin{eqnarray}
\label{sum'''}
S_1(\alpha) &=& \frac{1}{\phi(q)^n} \tsp  \mathcal{A} (q, a; (1, \ldots, n) ; \emptyset ;  \emptyset )
\sum_{\mathbf{x} \in \mathbb{N}^n} \varpi(\mathbf{x})  e ( \tau F(\mathbf{x})  )
\\
\notag
&+& \sum_{(\mathbf{j}, \mathbf{k}) } \frac{(-1)^{\ell}}{\phi(q)^n}
\sum_{\chi_1, \ldots, \chi_m (\textnormal{mod} \tsp q)}
\mathcal{A}(q, a; \mathbf{i} ;  (j_1, \chi_1), \ldots,  (j_m, \chi_m); \mathbf{k} ) \cdot
\\
&& \sum_{\mathbf{x} \in \mathbb{N}^n} \varpi(\mathbf{x}) \tsp  \chi_1(x_{j_1}) \Lambda^*(x_{j_1}) \cdots \chi_m(x_{j_m}) \Lambda^*(x_{j_m})
\tsp  x_{k_1}^{\widetilde{\beta} - 1} \cdots x_{k_{\ell}}^{\widetilde{\beta} - 1}
\tsp e ( \tau F(\mathbf{x})  ),
\notag
\end{eqnarray}
where the sum $\sum_{(\mathbf{j}, \mathbf{k}) }$ is over all $(\mathbf{j}, \mathbf{k})$ satisfying $(m, \ell) \not = (0,0)$ and (\ref{cond on j and k}).
Each summand of the sum $\sum_{(\mathbf{j}, \mathbf{k}) }$ in (\ref{sum'''}) can also be expressed as
\begin{eqnarray}
\notag
&& \frac{ (-1)^{\ell} }{\phi(q)^n}
\sum_{\mathbf{h} \in \mathbb{U}_q^n} \overline{\widetilde{\chi} \chi^0 }(h_{k_1}) \cdots
\overline{\widetilde{\chi} \chi^0}(h_{k_{\ell}}) \
e \left( \frac{a}{q} F(\mathbf{h}) \right) \cdot
\\
&&
\notag
\sum_{ \substack{ x_{j_1}, \ldots, x_{j_m} \in \mathbb{N} } } \
\prod_{  1 \leq v \leq m } \Lambda^*(x_{j_v}) \sum_{\chi_v (\textnormal{mod} \tsp q)}   \overline{\chi_v}(h_{j_v})  \chi_v(x_{j_v})
\cdot
\sum_{ \substack{ x_{u} \in \mathbb{N} \\ (u \not \in\{j_1, \ldots, j_m\}) } }
\varpi(\mathbf{x}) \tsp x_{k_1}^{\widetilde{\beta} - 1} \cdots x_{k_{\ell}}^{\widetilde{\beta} - 1} e ( \tau F(\mathbf{x}) ).
\end{eqnarray}
Next we apply the following estimate which can be deduced from the mean value theorem along with (\ref{excepZZ}): for each $x_{j_1}, \ldots, x_{j_m} \in \mathbb{N}$ we have
\begin{eqnarray}
\notag
&&  \sum_{ \substack{ x_{u} \in \mathbb{N} \\ (u \not \in\{j_1, \ldots, j_m\}) } }
\varpi(\mathbf{x}) \tsp x_{k_1}^{\widetilde{\beta} - 1} \cdots x_{k_{\ell}}^{\widetilde{\beta} - 1} e ( \tau F(\mathbf{x})  )
\\
&=&
\notag
\int_0^{\infty} \cdots \int_0^{\infty}
\varpi(\mathbf{x}) \tsp x_{k_1}^{\widetilde{\beta} - 1} \cdots x_{k_{\ell}}^{\widetilde{\beta} - 1}
 e ( \tau F(\mathbf{x})  ) \tsp  d x_{i_1} \cdots d x_{i_{n - m - \ell}} d x_{k_1} \cdots d x_{k_{\ell}}
+ O(N^{n - m + \vartheta_0 - 1}),
\end{eqnarray}
where the implicit constant is independent of $x_{j_1}, \ldots, x_{j_m}$.
Therefore, by substituting these expressions into (\ref{sum'''}), we see that
we have obtained the result apart form the error term. Since
$$
 \sum_{\chi_v (\textnormal{mod} \tsp q)}   \overline{\chi_v}(h_{j_v})  \chi_v(x_{j_v}) \geq 0,
$$
for each $(\mathbf{j}, \mathbf{k})$ we have by the orthogonality relation of the Dirichlet characters and the prime number theorem that
\begin{eqnarray}
&& \frac{  N^{n - m + \vartheta_0 - 1} }{\phi(q)^n}
 \sum_{\mathbf{h} \in \mathbb{U}_q^n} \, \sum_{ 1 \leq  x_{j_1} \leq N} \cdots \sum_{ 1 \leq  x_{j_m} \leq N}
\prod_{  1 \leq v \leq m } \Lambda^*(x_{j_v}) \, \Big{|} \sum_{\chi_v (\textnormal{mod} \tsp q)}   \overline{\chi_v}(h_{j_v})  \chi_v(x_{j_v}) \Big{|}
\\
&=&
\notag
\frac{  N^{n - m + \vartheta_0 - 1} }{\phi(q)^m}
\prod_{1 \leq v \leq m} \,
\sum_{h_{j_v} \in \mathbb{U}_q} \,  \sum_{ \substack{ 1 \leq x_{j_v} \leq N } }  \Lambda^*(x_{j_v}) \sum_{\chi_v (\textnormal{mod} \tsp q)}   \overline{\chi_v}(h_{j_v})  \chi_v(x_{j_v})
\\
&=&
\notag
N^{n - m + \vartheta_0 - 1}
\prod_{1 \leq v \leq m} \,
\sum_{h_{j_v} \in \mathbb{U}_q} \, \sum_{ \substack{ 1 \leq x_{j_v} \leq N \\  x_{j_v} \equiv h_{j_v} (\textnormal{mod} \tsp q) } }  \Lambda^*(x_{j_v})
\\
&\leq&
\notag
N^{n - m + \vartheta_0 - 1}
\left( \sum_{ 1 \leq x \leq N }  \Lambda(x) \right)^m
\\
&\ll&
\notag
N^{n + \vartheta_0 - 1},
\end{eqnarray}
and from this bound it follows that the error term is as in the statement of the lemma.
\end{proof}

It follows from Lemma \ref{first sum lemma} that
\begin{eqnarray}
\label{MAJOR1}
&&\int_{\mathfrak{M}'(\vartheta_0)} S_1(\alpha) \tsp d \alpha
\\
\notag
&=&
\sum_{1 \leq q \leq N^{\vartheta_0}} \sum_{\substack{ 1 \leq a \leq q \\  (a,q) = 1 }} \frac{1}{\phi(q)^n} \tsp \mathcal{A} (q, a; (1, \ldots, n) ; \emptyset ;  \emptyset )
\cdot
\int_{|\tau| < N^{\vartheta_0 - d}}
\mathcal{W} (\tau;(1, \ldots, n) ;  \emptyset ; \emptyset ) \tsp d \tau
\\
\notag
&+& \sum_{1 \leq q \leq N^{\vartheta_0}} \sum_{\substack{ 1 \leq a \leq q \\  (a,q) = 1 }} \sum_{(\mathbf{j}, \mathbf{k}) } \frac{(-1)^{\ell}}{\phi(q)^n} \tsp \int_{|\tau| < N^{\vartheta_0 - d}} S_{\mathbf{j}, \mathbf{k}} \left( \frac{a}{q} + \tau \right) \tsp d \tau
+ O(N^{n - d + 4 \vartheta_0 - 1}),
\end{eqnarray}
where the sum $\sum_{(\mathbf{j}, \mathbf{k}) }$ is as in the statement of Lemma \ref{first sum lemma}.
We prove that the first term on the right hand side of (\ref{MAJOR1}) contributes the main term, while the remaining terms are error terms. We change the order of summation in the second term, and consider the contribution from each $( \mathbf{j}, \mathbf{k})$ separately. We deal with the case $\ell = 0$ in Section \ref{sec m>0}, and the case $\ell > 0$ in Section \ref{secm=0}.
Finally, Theorem \ref{mainthm2} is established in Section \ref{secmajormainterm}.

\subsection{Case $\ell = 0$}
\label{sec m>0}
Since $(m, \ell) \not = (0,0)$, we necessarily have $m > 0$. Without loss of generality let $\mathbf{j} = (1, \ldots, m)$.
In this case, we have
\begin{eqnarray}
\label{eqn 1 m>0}
&& \Big{|} \sum_{1 \leq q \leq N^{\vartheta_0}} \sum_{ \substack{ 1 \leq a \leq q \\  \gcd(a,q) = 1 } } \frac{1}{\phi(q)^n}  \int_{|\tau| < N^{\vartheta_0 - d}}  S_{\mathbf{j}, \emptyset } \left( \frac{a}{q} + \tau \right)  d \tau \Big{|}
\\
\notag
&=& \Big{|} \sum_{1 \leq q \leq N^{\vartheta_0}}  \sum_{ \substack{ \chi'_1, \ldots, \chi'_m \\ (\textnormal{mod} \tsp q) } }  \sum_{\substack{ 1 \leq a \leq q \\  \gcd(a,q) = 1 }} \frac{1}{\phi(q)^n} \tsp \mathcal{A}(q, a; \mathbf{i};  (1, \chi'_1), \ldots, (m, \chi'_m) ; \emptyset  ) \cdot  \\
&& \int_{|\tau| < N^{\vartheta_0 - d}}  \mathcal{W}(\tau; \mathbf{i}; (1, \chi'_1), \ldots, (m, \chi'_m) ; \emptyset  )  \tsp d \tau  \Big{|}.
\notag
\end{eqnarray}
Let us denote $\chi'_v = \chi_v \chi^0$, where $\chi_v$ is the primitive character modulo $r_v$ which induces $\chi'_v$,
and $\chi^0$ is the principal character modulo $q$. We also denote $R= \textnormal{lcm}(r_1, \ldots, r_m)$.
We let 
$$
\delta_{\min} =  \min_{1 \leq u \leq n} (x_{0,u} - \delta).
$$
Let $p \in [(x_{0,v} - \delta)N, (x_{0,v} + \delta) N]$ be a prime.
Since $q \leq N^{\vartheta_0} < \delta_{\min} N$, we have $\gcd(p,q) = 1$ and it follows that
$\chi_v(p) \chi^{0}(p) = \chi_v(p)$.
Consequently, we obtain
$$
\mathcal{W}(\tau; \mathbf{i}; (1, \chi'_1), \ldots, (m, \chi'_m) ; \emptyset  )
= \mathcal{W}(\tau; \mathbf{i}; (1, \chi_1), \ldots, (m, \chi_m) ; \emptyset  ).
$$
Let $\kappa$ be any real number satisfying
$$
2 < \kappa < \frac{\textnormal{codim} \tsp V_F^*}{2(2d-1)4^d} -1.
$$
Since $q / \phi(q) \ll_{\varepsilon} q^{\varepsilon}$ for any $\varepsilon > 0$, it follows from Lemma \ref{exp sum modp} that
\begin{eqnarray}
\label{usinglem7.4}
\sum_{\substack{ 1 \leq q \leq N^{\vartheta_0} \\  R |q   }}  \sum_{\substack{ 1 \leq a \leq q \\  \gcd(a,q) = 1 }} \frac{1}{\phi(q)^n} | \mathcal{A}(q, a; \mathbf{i}; (1, \chi_1 \chi^0), \ldots, (m, \chi_m \chi^0) ; \emptyset ) | \ll R^{- \kappa}.
\end{eqnarray}
Therefore, the term on the right hand side of (\ref{eqn 1 m>0}) can be rewritten as
\begin{eqnarray}
\label{eqn 2 m>0}
&& \Big{|} \sum_{   1 \leq r_1, \ldots, r_m \leq N^{\vartheta_0} } \  \sideset{}{^*}\sum_{\substack{\chi_v (\textnormal{mod} \tsp r_v)  \\  (1 \leq v \leq m) } }
\sum_{\substack{ 1 \leq q \leq N^{\vartheta_0} \\  R |q   }}   \sum_{\substack{ 1 \leq a \leq q \\  \gcd(a,q) = 1 }} \frac{1}{\phi(q)^n} \tsp
\mathcal{A}(q, a; \mathbf{i}; (1, \chi_1 \chi^0), \ldots, (m, \chi_m \chi^0) ;\emptyset ) \cdot
\\
\notag
&& \int_{|\tau| < N^{\vartheta_0 - d}} \mathcal{W}(\tau; \mathbf{i}; (1, \chi_1), \ldots, (m, \chi_m) ; \emptyset ) \tsp d \tau
\Big{|}
\\
\notag
&\ll& \sum_{   1 \leq r_1, \ldots, r_m \leq N^{\vartheta_0} } R^{-\kappa}  \sideset{}{^*}\sum_{\substack{\chi_v (\textnormal{mod} \tsp r_v) \\ (1 \leq v \leq m) } }
\int_{|\tau| < N^{\vartheta_0 - d}}
| \mathcal{W}(\tau; \mathbf{i}; (1, \chi_1), \ldots, (m, \chi_m) ; \emptyset  )  |  \tsp d \tau.
\end{eqnarray}

We will make use of the following explicit formula (for example, it can be deduced from \cite[\S 17 and \S 19]{D}): Let $1 \leq q \leq X$ and $2 \leq T' \leq X^{1/2}$. For any primitive character $\chi$ modulo $q$, we have
\begin{eqnarray}
\label{explicit formula}
\sum_{y \leq X} \Lambda^*(y) \chi(y)  = \delta_{\chi = \chi^0} X - \sum_{ \substack{ \rho \in B_{T'}  \\  L(\rho, \chi) = 0  }} \frac{X^{\rho}}{\rho} + E(X),
\end{eqnarray}
where $\delta_{\chi = \chi^0} = 1$ if $\chi = \chi^0$ and $0$ otherwise, and
\begin{eqnarray}
\label{explicit error}
| E(X) |  \ll \frac{X (\log X)^2}{T'}.
\end{eqnarray}
Here the sum $\sum_{ \substack{ \rho \in B_{T'}  \\  L(\rho, \chi) = 0  }}$ is over all the zeros, with multiplicity, of $L(s, \chi)$ in $B_{T'}$.
We will be using (\ref{explicit formula}) with $T' = T$ and $N \ll X \ll N$.

With these notation we obtain the following lemma. Note we prove the lemma without assuming $\ell = 0$. 
\begin{lem}
\label{bound on W}
Without loss of generality let $\mathbf{j} = (1, \ldots, m)$ and $\mathbf{k} = (m+1, \ldots, m + \ell)$,
where $m > 0$ and $\ell \geq 0$. Suppose $\chi_v$ is a primitive character modulo $r_v$ $(1 \leq v \leq m)$.
Let $|\tau| < N^{\vartheta_0 - d}$.
Then we have
\begin{eqnarray}
\label{eqn lem bdd W}
&&\Big{|}  \mathcal{W}(\tau; \mathbf{i};  (1, \chi_1), \ldots,  (m, \chi_m); \mathbf{k} ) \Big{|}
\\
\notag
&\ll& \Big{|} \int_{0}^{\infty} \cdots \int_0^{\infty}
\prod_{v=1}^m  \sideset{}{'}\sum_{\rho_v} x_v^{\rho_v - 1}
\cdot
x_{m+1}^{\widetilde{\beta} - 1} \cdots x_{m + \ell}^{\widetilde{\beta} -1}
\tsp \varpi(\mathbf{x}) \tsp  e (\tau F(\mathbf{x}) ) \tsp d \mathbf{x} \Big{|}
+ \widetilde{E},
\end{eqnarray}
where
\begin{eqnarray}
\notag
\widetilde{E} =  \sum_{ \boldsymbol{\epsilon} \in \{ 0, 1 \}^{m} \backslash \{ \mathbf{0} \} } N^{n} N^{(\vartheta_0 - 1) (\epsilon_1 + \cdots + \epsilon_m)} E(N)^{\epsilon_1 + \cdots + \epsilon_m} \prod_{ \epsilon_j = 0  }  \sideset{}{'}\sum_{\rho_j} (\delta_{\min} N)^{\beta_j - 1}.
\end{eqnarray}
\end{lem}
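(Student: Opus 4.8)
The plan is to convert, one variable at a time, each of the $m$ summations over $x_{j_v}$ in $\mathcal{W}(\tau;\mathbf{i};(1,\chi_1),\ldots,(m,\chi_m);\mathbf{k})$ into an integration, by Abel summation combined with the explicit formula (\ref{explicit formula}); the ``main term'' of each replacement builds up the integral on the right of (\ref{eqn lem bdd W}), and the error terms accumulate into $\widetilde{E}$.

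\textbf{The one-variable step.} Fix $v$ and regard everything in the summand of $\mathcal{W}$ except the factor coming from $x_{j_v}$ (namely $\chi_v(x_{j_v})\Lambda^*(x_{j_v})$, with the convention following (\ref{DefnW}) applied to it) as a function $G_v$ of $x_{j_v}$; it is $C^\infty$ (since $\varpi_{j_v}$ and $e(\tau F(\mathbf{x}))$ are) and, for $\delta$ small and $N$ large, supported in $[(x_{0,j_v}-\delta)N,(x_{0,j_v}+\delta)N]\subset(1,N)$. Writing $\Psi_v(t)=\sum_{x\le t}\chi_v(x)\Lambda^*(x)$ with the convention subtraction built in, Abel summation gives
$$
\sum_{x}\chi_v(x)\Lambda^*(x)\,G_v(x)=-\int_0^\infty \Psi_v(t)\,G_v'(t)\,dt .
$$
Now (\ref{explicit formula}) with $T'=T$ (legitimate since $T=N^{3(\vartheta_0+\varepsilon_0)}\le N^{1/2}$ and $2\le T$), together with $\sum_{x\le t}1=t+O(1)$ and $\sum_{x\le t}x^{\widetilde\beta-1}=t^{\widetilde\beta}/\widetilde\beta+O(1)$ (the latter using $\widetilde\beta-1\in(-1/2,0)$ from (\ref{excepZZ})), shows that the convention is exactly what cancels the main term $\delta_{\chi_v=\chi^0}t$ when $\chi_v$ is trivial and the exceptional-zero term $-t^{\widetilde\beta}/\widetilde\beta$ when $\chi_v$ induces $\widetilde\chi$, so that
$$
\Psi_v(t)=-\sideset{}{'}\sum_{\rho_v}\frac{t^{\rho_v}}{\rho_v}+R_v(t),\qquad R_v(t)=E(t)+O(1),\quad \sup_t|R_v(t)|\ll E(N),
$$
where $\sideset{}{'}\sum_{\rho_v}$ runs over the non-exceptional zeros of $L(s,\chi_v)$ in $B_T$, $|E(t)|\ll N(\log N)^2/T=E(N)$ by (\ref{explicit error}), and $E(N)\gg1$. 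Substituting and integrating by parts back in the first piece (boundary terms vanish by compact support) yields
$$
\sum_{x}\chi_v(x)\Lambda^*(x)\,G_v(x)=-\int_0^\infty\Big(\sideset{}{'}\sum_{\rho_v}t^{\rho_v-1}\Big)G_v(t)\,dt-\int_0^\infty R_v(t)\,G_v'(t)\,dt .
$$

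\textbf{Iteration and collection of errors.} Applying this identity for $v=1,\dots,m$ in turn and expanding the product of the two resulting pieces over the $m$ variables gives $2^m$ contributions indexed by $\boldsymbol{\epsilon}\in\{0,1\}^m$ ($\epsilon_v=0$ for the main piece, $\epsilon_v=1$ for the $R_v$ piece). The contribution $\boldsymbol{\epsilon}=\mathbf{0}$, after folding the new variables into the integral, is precisely the first term on the right of (\ref{eqn lem bdd W}) (up to the overall sign $(-1)^m$, harmless inside $|\cdot|$). For $\boldsymbol{\epsilon}\ne\mathbf{0}$ I bound trivially: each of the $n-m$ remaining integrations over the $x_i$'s and $x_k$'s contributes $\ll N$ (using $|\varpi_\ell|\le\mathfrak{c}$ and $|x_k^{\widetilde\beta-1}|\le1$ on the support); each variable with $\epsilon_v=0$ contributes $\ll N\,\sideset{}{'}\sum_{\rho_v}(\delta_{\min}N)^{\beta_v-1}$, since $|t^{\rho_v-1}|=t^{\beta_v-1}\le(\delta_{\min}N)^{\beta_v-1}$ there; and each variable with $\epsilon_v=1$ contributes $\ll N\cdot\sup|R_v|\cdot\sup|G_v'|\ll N\cdot E(N)\cdot N^{\vartheta_0-1}=N^{\vartheta_0}E(N)$, the key input being the derivative bound
$$
\Big|\frac{\partial}{\partial x_{j_v}}\big(\varpi(\mathbf{x})\,e(\tau F(\mathbf{x}))\big)\Big|\ll N^{-1}+|\tau|\,\|F\|\,N^{d-1}\ll N^{\vartheta_0-1},
$$
valid because $|\tau|<N^{\vartheta_0-d}$ and $F$ is fixed. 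Multiplying these factors over the $n$ directions reproduces exactly the $\boldsymbol{\epsilon}$-summand of $\widetilde{E}$, which finishes the argument.

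\textbf{The main obstacle} is the bookkeeping in the last step: one must check that the derivative bound for $G_v$ and the trivial bounds for the other directions hold \emph{uniformly} even after some of the other $x_{j_{v'}}$-sums have already been converted to integrals against $\sideset{}{'}\sum_{\rho_{v'}}t^{\rho_{v'}-1}$ or against $R_{v'}$, so that the factors multiply cleanly; this is routine but delicate. Equally important is verifying that the convention following (\ref{DefnW}) is calibrated exactly so as to remove the $\delta_{\chi_v=\chi^0}t$ and $-t^{\widetilde\beta}/\widetilde\beta$ main terms, leaving only the non-exceptional zeros in the displayed integral.
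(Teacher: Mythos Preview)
Your proposal is correct and follows essentially the same approach as the paper: partial (Abel) summation in each of the $m$ discrete variables, the explicit formula (\ref{explicit formula}) with the convention after (\ref{DefnW}) cancelling the $\delta_{\chi_v=\chi^0}t$ and $-t^{\widetilde\beta}/\widetilde\beta$ terms, integration by parts to undo the derivative on the zero-sum piece, and then the trivial estimate on each $\boldsymbol{\epsilon}\neq\mathbf{0}$ term using the bound $\big|\partial^{\epsilon_1+\cdots+\epsilon_m}\varsigma/\partial^{\epsilon_1}t_1\cdots\partial^{\epsilon_m}t_m\big|\ll N^{(\vartheta_0-1)(\epsilon_1+\cdots+\epsilon_m)}$. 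The paper records the iteration compactly via mixed partials of $\varsigma(\mathbf{t})=\varpi(\mathbf{t})e(\tau F(\mathbf{t}))$ and an auxiliary $E'(x_i)$ absorbing your $R_v$, but the content is the same and your bookkeeping remark is exactly the point that needs checking.
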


\begin{proof}
Let $\varsigma(\mathbf{t}) = \varpi(\mathbf{t})   e (\tau F(\mathbf{t}) )$.
For each $1 \leq i \leq m$ let
$$
E'(x_i) =
\left\{
    \begin{array}{ll}
         E(x_i) + \{x_i\}
         &\mbox{if } \chi_i = \chi^0 ,\\
         E(x_i) - (x_i^{\widetilde{\beta}} / \widetilde{\beta} ) + \sum_{1 \leq t \leq x_i} t^{\widetilde{\beta} - 1}
         &\mbox{if } \chi_i = \widetilde{\chi} \chi^0, \\
         E(x_i) &\mbox{otherwise.}
    \end{array}
\right.
$$
Since we have (\ref{excepZZ}), it follows that
$$
|  - \frac{x_i^{\widetilde{\beta}} }{ \widetilde{\beta}} + \sum_{1 \leq t \leq x_i} t^{\widetilde{\beta} - 1} | \ll 1
$$
for any $1 \leq x_i \leq N$. Recall the definition of $\mathcal{W}(\tau; \mathbf{i};  (1, \chi_1), \ldots,  (m, \chi_m); \mathbf{k} )$ given in (\ref{DefnW}) and the convention
described in the sentence following it. By applying partial summation, the explicit formula (\ref{explicit formula}) and integration by parts to the integrand of $\mathcal{W}(\tau; \mathbf{i};  (1, \chi_1), \ldots,  (m, \chi_m); \mathbf{k} )$, we obtain
\begin{eqnarray}
\notag
&&\sum_{x_{1}, \ldots, x_{m} \in \mathbb{N}}
\chi_1(x_{1}) \Lambda^*(x_{1}) \cdots \chi_m(x_{m}) \Lambda^*(x_{m}) \tsp \varsigma(\mathbf{x})
\\
&=&
\notag
(-1) \sum_{x_{2}, \ldots, x_{m} \in \mathbb{N}}
 \chi_2(x_{2}) \Lambda^*(x_{2}) \cdots \chi_m(x_{m}) \Lambda^*(x_{m})
\int_0^{\infty} \frac{\partial \varsigma}{\partial t_1} (\mathbf{x}) E'(x_1) dx_1
\\
&+&
\notag
(-1) \sum_{x_{2}, \ldots, x_{m} \in \mathbb{N}}
\chi_2(x_{2}) \Lambda^*(x_{2}) \cdots \chi_m(x_{m}) \Lambda^*(x_{m})
\int_0^{\infty} \frac{\partial \varsigma}{\partial t_1} (\mathbf{x}) \sideset{}{'}\sum_{\rho_1} x_1^{\rho_1 - 1}  dx_1.
\end{eqnarray}
We repeat this procedure with respect to each $x_2, \ldots, x_m$, and obtain
\begin{eqnarray}
\mathcal{W}(\tau; \mathbf{i}; (1, \chi_1), \ldots,  (m, \chi_m); \mathbf{k} )
&=&
\notag
(-1)^m \sum_{ \boldsymbol{\epsilon} \in \{ 0, 1 \}^{m}  }
\int_{0}^{\infty} \cdots \int_{0}^{\infty}  \frac{\partial^{\epsilon_1 + \cdots + \epsilon_m}  \varsigma }
{ \partial^{\epsilon_1} t_{1} \cdots  \partial^{\epsilon_m} t_{m}  }(\mathbf{x})   \cdot
\\
\notag
&&x_{m+1}^{\widetilde{\beta} - 1} \cdots x_{m + \ell}^{\widetilde{\beta} -1}  \ \prod_{ \epsilon_j = 0  }
\sideset{}{'}\sum_{\rho_j} x_j^{\rho_j - 1} \cdot \prod_{\epsilon_i = 1} E'(x_i) \  d\mathbf{x}.
\notag
\end{eqnarray}
Clearly the summand with $\boldsymbol{\epsilon} = \mathbf{0}$ corresponds to the first term on the right hand side of (\ref{eqn lem bdd W}).
It can be verified that for any $|\tau| < N^{\vartheta_0 - d}$ and $\mathbf{x} \in \textnormal{supp}(\varpi)$, we have
$$
\Big{|} \frac{\partial^{\epsilon_1 + \cdots + \epsilon_m} \varsigma}{ \partial^{\epsilon_1} t_{1} \cdots  \partial^{\epsilon_m} t_{m}  }  (\mathbf{x})  \Big{|}
\ll  N^{(\vartheta_0 - 1)(\epsilon_1 + \cdots + \epsilon_m)}.
$$
Therefore, we obtain
\begin{eqnarray}
&&\Big{|} \int_{0}^{\infty} \cdots \int_{0}^{\infty}  \frac{\partial^{\epsilon_1 + \cdots + \epsilon_m} \varsigma}
{ \partial^{\epsilon_1} t_{1} \cdots  \partial^{\epsilon_m} t_{m}  }  (\mathbf{x})  \
x_{m+1}^{\widetilde{\beta} - 1} \cdots x_{m + \ell}^{\widetilde{\beta} -1} \prod_{ \epsilon_j = 0  } \sideset{}{'}\sum_{\rho_j} x_j^{\rho_j - 1} \cdot \prod_{\epsilon_i = 1} E'(x_i) \  d \mathbf{x} \ \Big{|}
\notag
\\
&\ll&
\notag
N^{n} N^{(\vartheta_0 - 1) (\epsilon_1 + \cdots + \epsilon_m)} E(N)^{\epsilon_1 + \cdots + \epsilon_m} \prod_{ \epsilon_j = 0  }  \sideset{}{'}\sum_{\rho_j} (\delta_{\min} N)^{\beta_j - 1},
\end{eqnarray}
which proves our lemma.
\end{proof}

We apply Proposition \ref{prop osc int} to the first term on the right hand side of (\ref{eqn lem bdd W}) with $\ell = 0$,
and obtain an upper bound as follows
\begin{eqnarray}
\label{eqn 3 m>0}
&&
\int_{|\tau| < N^{\vartheta_0 - d} } \Big{|}  \int_{0}^{\infty} \cdots \int_0^{\infty}
\prod_{v=1}^m  \sideset{}{'}\sum_{\rho_v} x_v^{\rho_v - 1}
\cdot \varpi(\mathbf{x}) \tsp  e (\tau F(\mathbf{x}) )   \tsp d \mathbf{x}  \Big{|}  \tsp  d \tau
\\
&=&
N^{n-d} \int_{|\tau| < N^{\vartheta_0} } \Big{|}  \int_{0}^{\infty} \cdots \int_0^{\infty}
\prod_{v=1}^m  \sideset{}{'}\sum_{\rho_v} (N x_v)^{\rho_v - 1}
\cdot  \prod_{u=1}^n \omega(x_{u} - x_{0, u}) \cdot   e (\tau F(\mathbf{x}) )   \tsp d \mathbf{x}  \Big{|}  \tsp  d \tau
\notag
\\
\notag
&\ll&
N^{n - d } \sideset{}{'}\sum_{\rho_1, \ldots, \rho_m} | N^{\rho_1 - 1} \cdots N^{\rho_m - 1} |   \cdot
\\
&&
\notag
\int_{|\tau| < N^{\vartheta_0}} \Big{|} \int_{0}^{\infty} \cdots \int_0^{\infty}
x_1^{\rho_1 - 1} \cdots x_m^{\rho_m - 1} \prod_{u=1}^n \omega(x_{u} - x_{0, u}) \cdot  e (\tau F(\mathbf{x}) )   \tsp d \mathbf{x}   \Big{|} \tsp  d \tau
\\
\notag
&\ll&
N^{n - d} (\log N) \sideset{}{'}\sum_{\rho_1, \ldots, \rho_m} N^{\beta_1 - 1} \cdots N^{\beta_m - 1}.
\end{eqnarray}
Therefore, by combining  (\ref{eqn 1 m>0}), (\ref{eqn 2 m>0}), (\ref{eqn lem bdd W}) and (\ref{eqn 3 m>0}), we obtain
\begin{eqnarray}
\label{eqn 4 m>0}
&& \Big{|} \sum_{1 \leq q \leq N^{\vartheta_0}} \sum_{\substack{ 1 \leq a \leq q \\  \gcd(a,q) = 1 }} \int_{|\tau| < N^{\vartheta_0 - d}}  S_{\mathbf{j}, \emptyset} \left( \frac{a}{q} + \tau \right)  d \tau \Big{|}
\\
\notag
&\ll& \sum_{ 1 \leq r_1, \ldots, r_m \leq N^{\vartheta_0} } R^{-\kappa}  \sideset{}{^*}\sum_{\substack{ \chi_v (\textnormal{mod} \tsp r_v)  \\   (1 \leq v \leq m)  } } N^{n - d} (\log N) \sideset{}{'}\sum_{\rho_1, \ldots, \rho_m} N^{\beta_1 - 1} \cdots N^{\beta_m - 1}
\\
\notag
&+&
\sum_{ 1 \leq r_1, \ldots, r_m \leq N^{\vartheta_0} }
R^{-\kappa}  \sideset{}{^*}\sum_{ \substack{ \chi_v (\textnormal{mod} \tsp r_v)  \\   (1 \leq v \leq m)  } }
\int_{|\tau| < N^{\vartheta_0 - d}}  \widetilde{E}  \ d \tau. 
\end{eqnarray}

We begin by bounding the first term on the right hand side of (\ref{eqn 4 m>0}). Clearly we have $R^{-\kappa} \leq r_1^{-\kappa}$.
Let $D > 1$ and $A'>0$. Then  by Lemma \ref{Gbdd} and Remark \ref{remZFR} we obtain that it is bounded by
\begin{eqnarray}
&\ll&
N^{n - d} (\log N) \left( \sum_{ \substack{ 1 \leq r_1 \leq N^{\vartheta_0}} } r_1^{-\kappa}  \sideset{}{^*}\sum_{\chi_1 (\textnormal{mod} \tsp r_1)} \sideset{}{'}\sum_{\rho_1 } N^{\beta_1 - 1} \right)
\prod_{v=2}^m  \
\sum_{ \substack{ 1 \leq r_v \leq N^{\vartheta_0}}} \  \sideset{}{^*}\sum_{\chi_v (\textnormal{mod} \tsp r_v)}
\sideset{}{'}\sum_{\rho_v} N^{\beta_v - 1}
\notag
\\
\notag
&\ll&  N^{n-d} (\log N) \sum_{  1 \leq r_1 \leq N^{\vartheta_0} } r_1^{-\kappa} \sideset{}{^*}\sum_{\chi_1 (\textnormal{mod} \tsp r_1)} \sideset{}{'}\sum_{\rho_1} N^{\beta_1 - 1}
\\
\notag
&\ll&  N^{n-d} (\log N)  \sum_{  1 \leq r_1 \leq (\log N)^D } \  \sideset{}{^*}\sum_{\chi_1 (\textnormal{mod} \tsp r_1)} \sum_{\substack{\rho_1 \in B_T \\ L(\rho_1, \chi_1) = 0 }} N^{\beta_1 - 1}
\\
\notag
&+&
N^{n-d} (\log N)^{- D \kappa + 1} \sum_{  1 \leq r_1 \leq N^{\vartheta_0} } \  \sideset{}{^*}\sum_{\chi_1 (\textnormal{mod} \tsp r_1)} \sideset{}{'}\sum_{\rho_1} N^{\beta_1 - 1}
\\
\notag
&\ll& N^{n-d} (\log N)^{- A'} + N^{n - d} (\log N)^{- D \kappa + 1}.
\end{eqnarray}
Next we bound the second term on the right hand side of (\ref{eqn 4 m>0}). Given $ \boldsymbol{\epsilon} \in \{ 0, 1 \}^{m} \backslash \{ \mathbf{0} \}$
let $\iota(\boldsymbol{\epsilon})$ be the smallest number $\iota$ such that $\epsilon_{\iota} =1$.
Recall $\kappa > 2$, (\ref{defnT}) and (\ref{explicit error}). By Lemma \ref{Gbdd} we have
\begin{eqnarray}
\notag
&& \sum_{  1 \leq r_1, \ldots, r_m \leq N^{\vartheta_0} } R^{-\kappa}  \sideset{}{^*}\sum_{\substack{\chi_v (\textnormal{mod} \tsp r_v) \\  (1 \leq v \leq m)}}
\int_{|\tau| < N^{\vartheta_0 - d}}
\widetilde{E} \ d \tau
\\
&\ll& \sum_{ \boldsymbol{\epsilon} \in \{ 0, 1 \}^{m} \backslash \{ \mathbf{0} \} }  N^{n - d + \vartheta_0} N^{(\vartheta_0 - 1) (\epsilon_1 + \cdots + \epsilon_m)} E(N)^{\epsilon_1 + \cdots + \epsilon_m} \sum_{  1 \leq r_{\iota(\boldsymbol{\epsilon})} \leq N^{\vartheta_0}} r_{ \iota(\boldsymbol{\epsilon}) }^{- \kappa} \  \sideset{}{^*}\sum_{\chi_{\iota(\boldsymbol{\epsilon})} (\textnormal{mod} \tsp r_{\iota(\boldsymbol{\epsilon})})} 1 \cdot
\notag
\\
&&
\prod_{ \substack{ \epsilon_{i} = 1 \\ i \not =  \iota(\boldsymbol{\epsilon}) } }  \sum_{  1 \leq r_{i} \leq N^{\vartheta_0}} \ \sideset{}{^*}\sum_{\chi_i (\textnormal{mod} \tsp r_i)} 1
\ \cdot \
\prod_{ \epsilon_j = 0  } \,  \sum_{  1 \leq r_j \leq N^{\vartheta_0}} \ \sideset{}{^*}\sum_{\chi_j (\textnormal{mod} \tsp r_j)} \sideset{}{'}\sum_{\rho_j} (\delta_{\min} N)^{\beta_j - 1}
\notag
\\
&\ll&
\sum_{ \boldsymbol{\epsilon} \in \{ 0, 1 \}^{m} \backslash \{ \mathbf{0} \} }  N^{n - d + \vartheta_0} N^{(\vartheta_0 - 1) (\epsilon_1 + \cdots + \epsilon_m)} \left( N^{2 \vartheta_0}E(N) \right)^{\epsilon_1 + \cdots + \epsilon_m}  N^{- 2 \vartheta_0} \cdot
\notag
\\
&&
\prod_{ \epsilon_j = 0  } \,  \sum_{  1 \leq r_j \leq N^{\vartheta_0}} \  \sideset{}{^*}\sum_{\chi_j (\textnormal{mod} \tsp r_j)} \sideset{}{'}\sum_{\rho_j} (\delta_{\min} N)^{\beta_j - 1}
\notag
\\
&\ll&
N^{n - d  - \vartheta_0 - 2 \varepsilon_0 }.
\notag
\end{eqnarray}

\subsection{Case $\ell > 0$}
\label{secm=0}
In this case, we only need to consider $q$ divisible by $\widetilde{r}$.
Without loss of generality let $\mathbf{j} = (1, \ldots, m)$ and $\mathbf{k} = (m+1, \ldots, m+{\ell})$.
First we suppose $m=0$.
By (\ref{usinglem7.4}) (with $\widetilde{r}$ in place of $R$) and Proposition \ref{prop osc int} we obtain
\begin{eqnarray}
\notag
&& \Big{|} \sum_{ \substack{1 \leq q \leq N^{\vartheta_0} \\ \widetilde{r} | q }}    \sum_{\substack{ 1 \leq a \leq q \\  (a,q) = 1 }} \int_{|\tau| < N^{\vartheta_0 - d}}  S_{\emptyset, \mathbf{k}} \left( \frac{a}{q} + \tau \right)  d \tau \Big{|}
\\
\notag
&\leq& \sum_{ \substack{1 \leq q \leq N^{\vartheta_0} \\ \widetilde{r} | q }}  \sum_{\substack{ 1 \leq a \leq q \\  (a,q) = 1 }}   \frac{1}{\phi(q)^n} | \mathcal{A}(q, a; \mathbf{i}; \emptyset ; (1, \ldots, \ell)  ) | \cdot
\\
\notag
&&
 \int_{|\tau| < N^{\vartheta_0 - d}} \Big{|}  \int_{0}^{\infty} \cdots \int_0^{\infty}
x_{1}^{\widetilde{\beta} - 1} \cdots x_{\ell}^{\widetilde{\beta} - 1} \tsp \varpi(\mathbf{x}) \tsp  e (\tau F(\mathbf{x}) )   \tsp d \mathbf{x} \Big{|}  d \tau
\\
&\ll&
\widetilde{r}^{-\kappa} N^{n - d + \ell ( \widetilde{\beta} - 1) }
\int_{|\tau| < N^{\vartheta_0} } \Big{|} \int_{0}^{\infty} \cdots \int_0^{\infty}
x_{1}^{\widetilde{\beta} - 1} \cdots x_{\ell}^{\widetilde{\beta} - 1} \ \prod_{u=1}^n \omega(x_{u} - x_{0, u}) \cdot e (\tau F(\mathbf{x}) )   \tsp d \mathbf{x}  \Big{|} \tsp   d \tau
\notag
\\
&\ll&
\widetilde{r}^{-\kappa} N^{n - d} (\log N),
\notag
\end{eqnarray}
and from Remark \ref{remZFR} we have $\widetilde{r}^{-1} \ll (\log N)^{-A}$ for any $A>0$. 
When $m > 0$, we proceed in a similar manner as in Section \ref{sec m>0} and obtain
$$
\Big{|} \sum_{ \substack{1 \leq q \leq N^{\vartheta_0} \\ \widetilde{r} | q }}    \sum_{\substack{ 1 \leq a \leq q \\  (a,q) = 1 }} \int_{|\tau| < N^{\vartheta_0 - d}}  S_{\mathbf{j}, \mathbf{k}} \left( \frac{a}{q} + \tau \right)  d \tau \Big{|}
\ll N^{n - d} (\log N)^{-A'}
\notag
$$
for any $A'>0$. Since it only requires minimal modifications, we omit the details. 

\begin{rem}
There is work regarding enlarging the major arcs for problems related to Goldbach's conjecture, for example
\cite{MV} and \cite{R}; these were used as a guidance for developing the argument in this section.
\end{rem}

\subsection{Proof of Theorem \ref{mainthm2}} \label{secmajormainterm}
We combine (\ref{S and S1}) and (\ref{MAJOR1}) with the estimates from Sections \ref{sec m>0} and \ref{secm=0}.
As a result, we obtain
\begin{eqnarray}
\notag
&&\int_{\mathfrak{M}'(\vartheta_0)} S(\alpha) \tsp d\alpha
\\
&=&
\notag
\sum_{1 \leq q \leq N^{\vartheta_0}} \sum_{\substack{ 1 \leq a \leq q \\  (a,q) = 1 }} \frac{1}{\phi(q)^n} \tsp \mathcal{A} (q, a; (1, \ldots, n) ; \emptyset ;  \emptyset )
\cdot
\int_{|\tau| < N^{\vartheta_0 - d}}
\mathcal{W} (\tau;(1, \ldots, n) ;  \emptyset ; \emptyset ) \tsp d \tau
\\
\notag
&+& O \left(N^{n - d + 4 \vartheta_0 - 1} +  N^{n - d + 3 \vartheta_0 - \frac12} + N^{n - d  - \vartheta_0 - 2 \varepsilon_0 } +  \frac{N^{n - d}}{(\log N)^{A_1} } \right)
\end{eqnarray}
for any $A_1 > 0$.
By a standard argument (for example, see \cite[Section 5]{BP} or \cite[\S 2]{SS}) it follows that there exists $\delta'>0$ such that
\begin{eqnarray}
\notag
&&\int_{|\tau| < N^{\vartheta_0 - d}}
\mathcal{W}(\tau;(1, \ldots, n) ;  \emptyset ; \emptyset ) \tsp d \tau
\\
\notag
&=&
N^{n-d} \int_{|\tau| < N^{\vartheta_0}} \int_{0}^{\infty} \cdots \int_{0}^{\infty}
\prod_{u=1}^n \omega(x_{u} - x_{0, u}) \cdot e( \tau F(\mathbf{x}))  \tsp  d \mathbf{x} \tsp d \tau
\\
\notag
&=& c_1(F; \omega, \mathbf{x}_0) \, N^{n - d} + O(N^{n-d - \delta'}),
\end{eqnarray}
where $c_1(F; \omega, \mathbf{x}_0) \geq  0$ depends only on $F$, $\omega$ and $\mathbf{x}_0$. Also the work in \cite[Section 7]{CM} implies that there exists $\delta'' >0$
such that
\begin{eqnarray}
\notag
\sum_{1 \leq q \leq N^{\vartheta_0}} \sum_{\substack{ 1 \leq a \leq q \\  (a,q) = 1 }} \frac{1}{\phi(q)^n} \tsp \mathcal{A} (q, a; (1, \ldots, n) ; \emptyset ;  \emptyset ) &=&
\sum_{1 \leq q \leq N^{\vartheta_0}} \sum_{\substack{ 1 \leq a \leq q \\  (a,q) = 1 }} \frac{1}{\phi(q)^n}
\sum_{\substack{ \mathbf{h} \in \mathbb{U}_q^n } }  e \left( \frac{a}{q}  F(\mathbf{h} )   \right)
\\
\notag
&=& c_2(F) + O(N^{-\delta''}),
\end{eqnarray}
where $c_2(F) \geq 0$ depends only on $F$. 
The constant $c_1(F; \omega, \mathbf{x}_0) c_2(F)$ is a product of local densities; in fact, $c_1(F; \omega, \mathbf{x}_0) c_2(F) > 0$ provided $F$ satisfies the local conditions \textnormal{($\star$)} and $\varpi$ is as in this section. Furthermore, it follows from Proposition \ref{minor arc est} that
$$
\Big{|} \int_{\mathfrak{M}'(\vartheta_0) \backslash \mathfrak{M}(\vartheta_0) } S(\alpha) \tsp d\alpha \Big{|} \leq  \int_{\mathfrak{m}(\vartheta_0) } |S(\alpha)| \tsp d\alpha \ll \frac{N^{n-d}}{(\log N)^{A_2}}
$$
for any $A_2>0$. Therefore, by combining these estimates and Proposition \ref{minor arc est} with (\ref{orthog reln}), we obtain Theorem \ref{mainthm2} with
$c(F; \omega, \mathbf{x}_0) = c_1(F; \omega, \mathbf{x}_0) c_2(F)$.

\section{Reduction from prime power solutions to prime solutions}

\begin{thm}
\label{thm in B}
Let $\mathbf{y} = (y_1, \ldots, y_{m})$ and $\mathfrak{c}' > 0$. 
Let $f \in \mathbb{Z}[y_1, \ldots, y_m]$ be a polynomial of degree $d \geq 2$ such that 
$\| f^{[d - i]} \| \leq \mathfrak{c}' N^{i}$ $(0 \leq i \leq d)$ and
$$
\textnormal{codim} \tsp V_{f^{[d]}}^*   > (d-1) 2^d.
$$
Then we have
$$
\sum_{ \mathbf{y} \in [0,N]^m } \mathbbm{1}_{V(f)} (\mathbf{y}) \ll N^{m - d},
$$
where the implicit constant may depend on $f^{[d]}$ and $\mathfrak{c}'$ but not on $(f - f^{[d]})$.
\end{thm}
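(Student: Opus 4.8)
The plan is to prove Theorem~\ref{thm in B} by the Hardy--Littlewood circle method, following the argument of Birch~\cite{B}; the one new point is that every estimate has to be made uniform in the lower-degree part $f - f^{[d]}$, and this uniformity is automatic for the reasons indicated below. First I would write
$$
\sum_{\mathbf{y}\in[0,N]^m}\mathbbm{1}_{V(f)}(\mathbf{y}) = \int_0^1 T(\alpha)\,d\alpha, \qquad T(\alpha) = \sum_{\mathbf{y}\in[0,N]^m} e(\alpha f(\mathbf{y})),
$$
which is valid since $f$ is integer-valued on $\mathbb{Z}^m$. Then I fix $\vartheta\in(0,1)$, set $\vartheta_0 = (d-1)\vartheta$, and dissect $[0,1)$ into the major arcs $\mathfrak{M}$ (those $\alpha$ within $N^{\vartheta_0-d}$ of some $a/q$ with $\gcd(a,q)=1$, $q\le N^{\vartheta_0}$) and their complement $\mathfrak{m}$. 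The parameter $\vartheta$ will be calibrated exactly as in \cite{B} so that the minor-arc saving exceeds $d$ while the major-arc main term and error are both $O(N^{m-d})$; the hypothesis $\textnormal{codim}\tsp V^*_{f^{[d]}} > (d-1)2^d$ is precisely what allows this.

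On the minor arcs I apply Weyl differencing: by \cite[Lemma~4.3]{B} (equivalently, by Lemma~\ref{Lemma 2.5 in SS} specialised to the cube $[0,N]^m$), for $\alpha\in\mathfrak{m}$ one has $|T(\alpha)| \ll N^{m - c(d)\tsp\vartheta\tsp\textnormal{codim}\tsp V^*_{f^{[d]}} + \varepsilon}$ for a positive constant $c(d)$, the implicit constant depending only on $m$, $d$, $\|f^{[d]}\|$ and $\varepsilon$ --- crucially \emph{not} on $f - f^{[d]}$. The reason is that the $(d-1)$-fold difference operator applied to $f$ produces a polynomial whose linear part in the surviving variable involves only $f^{[d]}$, the contributions of the lower-degree terms being absorbed into an additive constant; this is the same phenomenon recorded in the proof of Lemma~\ref{Lemma 2.5 in SS}. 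Since $\textnormal{codim}\tsp V^*_{f^{[d]}} > (d-1)2^d$, I can choose $\vartheta\in(0,1)$ with $c(d)\tsp\vartheta\tsp\textnormal{codim}\tsp V^*_{f^{[d]}} > d$, whence $\int_{\mathfrak m}|T(\alpha)|\,d\alpha \ll N^{m-d-\delta}$ for some $\delta>0$.

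On the major arcs I use the standard approximation $T(a/q+\beta) = q^{-m}S_{q,a}(f)\tsp I(\beta) + E_{q,a}(\beta)$, where $S_{q,a}(f) = \sum_{\mathbf{h}\bmod q} e((a/q)f(\mathbf{h}))$, $I(\beta) = \int_{[0,N]^m} e(\beta f(\mathbf{x}))\,d\mathbf{x}$, and $E_{q,a}(\beta) \ll q^{1+\varepsilon}(1 + N^d|\beta|)\tsp N^{m-1}$. Both $|S_{q,a}(f)| \ll q^{m - c'(d)\tsp\textnormal{codim}\tsp V^*_{f^{[d]}} + \varepsilon}$ and $|I(\beta)| \ll N^m\min\bigl(1,(N^d|\beta|)^{-c''(d)\tsp\textnormal{codim}\tsp V^*_{f^{[d]}}}\bigr)$ are again uniform in $f - f^{[d]}$: the former because mod-$q$ Weyl differencing, exactly as on the minor arcs, sees only $f^{[d]}$; the latter because after rescaling $I(\beta) = N^m\int_{[0,1]^m} e(\beta N^d \tilde f(\mathbf{x}))\,d\mathbf{x}$ with $\tilde f(\mathbf{x}) = N^{-d}f(N\mathbf{x})$, so that $\tilde f^{[d]} = f^{[d]}$ while $\|\tilde f^{[d-i]}\| = N^{-i}\|f^{[d-i]}\| \le \mathfrak{c}'$, and differencing this integral $d-1$ times and invoking the geometry-of-numbers estimate of Lemma~\ref{shrink lemma} gives a decay rate governed only by $\textnormal{codim}\tsp V^*_{f^{[d]}}$, $\mathfrak{c}'$, $m$ and $d$. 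Because $\textnormal{codim}\tsp V^*_{f^{[d]}} > (d-1)2^d$, the singular series $\sum_{q\ge1}\phi(q)q^{-m}\max_a|S_{q,a}(f)|$ converges and $\int_{\mathbb{R}}|I(\beta)|\,d\beta \ll N^{m-d}$; with $\vartheta_0$ small in Birch's sense the total error $\sum_{q\le N^{\vartheta_0}}\sum_a\int_{|\beta|<N^{\vartheta_0-d}}|E_{q,a}(\beta)|\,d\beta$ is $O(N^{m-d-\delta})$. Hence $\int_{\mathfrak M}|T(\alpha)|\,d\alpha \ll N^{m-d}$, and combining the two ranges yields the theorem.

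The hard part will be the uniformity in the lower-degree part, above all the decay of $I(\beta)$, where the rescaled $\tilde f$ is a bounded but \emph{not} small perturbation of $f^{[d]}$: one must check that the counting step controls the relevant lattice-point totals in terms of $V^*_{f^{[d]}}$ alone, not in terms of the singular locus of the non-homogeneous $\tilde f$. This works because after $d-1$ differencings the perturbation only shifts the resulting multilinear system by a constant, which is irrelevant to the count; but making this precise, together with tracking that all implied constants depend on $f^{[d]}$ and $\mathfrak{c}'$ only, is where the technical work lies. The remaining issue --- calibrating the circle-method parameters so that the minor-arc saving and the major-arc error budget are simultaneously met under the single hypothesis $\textnormal{codim}\tsp V^*_{f^{[d]}} > (d-1)2^d$ --- is purely bookkeeping and requires nothing beyond Birch's dissection.
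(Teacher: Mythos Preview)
Your proposal is correct and follows essentially the same approach as the paper: the paper's proof consists of the single observation that Birch's argument goes through verbatim, the lower-degree terms being irrelevant on the minor arcs and the coefficient constraint $\|f^{[d-i]}\|\le\mathfrak{c}'N^i$ being precisely what is needed on the major arcs (so that after rescaling $\mathbf{x}\mapsto N\mathbf{x}$ the polynomial has $O_{\mathfrak{c}'}(1)$ coefficients). You have simply spelled this out in more detail, including the rescaling $\tilde f(\mathbf{x})=N^{-d}f(N\mathbf{x})$ that makes the major-arc uniformity transparent.
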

This is a slight variant of the main theorem of \cite{B}, the difference being we have a polynomial whose absolute values of its coefficients of lower degree terms may be large. However, the restriction given in the statement ensures the argument in the major arcs analysis to still go through (the lower degree terms play no role in the minor arcs estimate), and the result follows.
Thus provided $n - \textnormal{codim} \tsp V_F^* - 2 > (d-1) 2^d$, which in particular is satisfied
assuming (\ref{codim of F}), in combination with Lemma \ref{Lemma on the B rank} and Theorem \ref{thm in B} we have
\begin{eqnarray}
\notag
\sum_{i=1}^n  \sum_{\substack{ \mathbf{x} \in [0,N]^{n} \\ x_i \not \in \wp } } \Lambda(\mathbf{x})
\mathbbm{1}_{V(F)}(\mathbf{x})
&\ll& (\log N)^{n-1} \sum_{i=1}^n \sum_{ \substack{ z \in [0,N]  \\ z \not \in \wp } } \Lambda(z) \sum_{ \substack{ \mathbf{x} \in [0,N]^{n} \\ x_i = z } }
\mathbbm{1}_{V(F)}(\mathbf{x})
\\
&\ll& (\log N)^n N^{1/2} N^{(n-1) - d}.
\notag
\end{eqnarray}
Therefore, from this estimate and Theorem \ref{mainthm2} we obtain Theorem \ref{mainthm1}.

\end{document}